\shorttitle{Yaglom limits} 
\renewcommand{\E}{\mathbb{E}}
\DeclareSymbolFont{bbold}{U}{bbold}{m}{n}
\DeclareSymbolFontAlphabet{\mathbbold}{bbold}
\DeclareMathOperator{\1}{\mathbbold{1}}
\newcommand{\indicator}[1]{\1 \event{#1}}
\renewcommand{\P}{\mathbb{P}}  
\newcommand{\NN}{\mathbb{N}}
\newcommand{\ZZ}{\mathbb{Z}}
\newcommand{\given}{ \mid  }
\newcommand{\pr}[1]{\P \event{ #1 }}
\newcommand{\prs}[2]{\P_{#1} \event{ #2 }}
\DeclarePairedDelimiter\abs{\lvert}{\rvert}
\DeclarePairedDelimiter\event{ \{ }{ \} }
\DeclarePairedDelimiter\set{ \{ }{ \} }
\newcommand{\qt}[1]{& &\quad\text{#1}}
\newcommand{\zK}{\prescript{}{\set{0}}{K}}
\newcommand{\ola}[1]{\overleftarrow{#1}}
\newcommand{\rhoi}{$\rho$-invariant }
\newcommand{\qsd}{QSD}
\newcommand{\qsds}{QSDs}
\renewcommand{\equiv}{\coloneqq}
\begin{document}
\title{Yaglom limits can depend on the starting state}

\authorone[Georgia Institute of Technology]{R. D. Foley} 

\addressone{Department of Industrial \& Systems Engineering \\
Georgia Institute of Technology \\
Atlanta, Georgia 30332-0205 \\ U.S.A.} 

\authortwo[The University of Ottawa]{D. R. McDonald} 

\addresstwo{Department of Mathematics and Statistics \\
The University of Ottawa \\
Ottawa, Ontario \\
Canada K1N 6N5 } 

\begin{abstract}
We construct a simple example, surely known to Harry Kesten, of an
$R$-transient Markov chain on a countable state space $S \cup \{
\delta \}$ where $\delta$ is absorbing.  The transition matrix $K$ on
$S$ is irreducible and strictly substochastic. We determine the Yaglom
limit, that is, the limiting conditional
behavior given non-absorption.  Each starting state $x \in S$ results in
a different Yaglom limit.  Each Yaglom limit is an $R^{-1}$-invariant
quasi-stationary distribution where $R$ is the convergence parameter of $K$.
Yaglom limits that depend on the starting state are related to a nontrivial
$R^{-1}$-Martin boundary.
\end{abstract}

\keywords{quasi-stationary; $R$-transient; $\rho$-Martin boundary; Yaglom
limit; gambler's ruin; eigenvalue; eigenvector; time reversal; Doob's
$h$-transformation; change of measure; invariant measure; harmonic function; substochastic.
} 

\ams{60J10}{60j50} 

\epigraph{The long run is a misleading guide to current affairs. In the long
run we are all dead. Economists set themselves too easy, too useless a task if
in tempestuous seasons they can only tell us that when the storm is past the
ocean is flat again.}{John Maynard Keynes}

\section{Introduction}
%

A gambler is pitted against an infinitely wealthy casino.  The gambler enters the casino with $x > 0$ dollars.  With each play, the gambler either wins a dollar with probability $b$ where $0 < b < 1/2$ or loses a dollar.  The gambler continues to play for as long as possible. What can be said about her fortune after many plays \emph{given that she still has at least one dollar}?

Seneta and Vere-Jones~\cite{Vere-Jones-Seneta} answered this question with the following probability distribution $\pi^*$:
\begin{alignat}{2}
\pi^*(y) &= \frac{1 - \rho}{a} y \left( \sqrt{\frac{b}{a}} \right)^{y - 1} \qt{for $y =  1,  2, \dots$} \label{eqn:SVLimit}
\end{alignat}
where $a = 1 - b$ and $\rho = 2\sqrt{ab}$.
Let $X_n$ be her fortune after $n$ plays.  Notice that her fortune alternates between being odd and even.  For $n$ large, Seneta and Vere-Jones proved that
\begin{alignat*}{2}
\prs{x}{X_n = y \given X_n \geq 1} &\approx
\begin{cases}
\frac{\pi^*(y)}{\pi^*(2 \NN)} &\text{ for $y$ even and $x+n$ is even} \\
 \frac{\pi^*(y)}{\pi^*(2 \NN - 1)} &\text{ for $y$ odd and $x+n$ is odd}
\end{cases}
\end{alignat*}
where $\NN \equiv \set{1,2, \dots}$ and $\P_x$ means that we also condition on $X_0 = x \in \NN$.
The probability $\pi^*$ assigns to the even and odd natural numbers is denoted by $\pi^*(2 \NN)$ and $\pi^*(2 \NN - 1)$, respectively.

The gambler's ruin problem and the Seneta--Vere-Jones' result are beautiful,
but the even/odd periodicity obscures our main point.  To remove this
distraction, assume the gambler starts with an even number of dollars $2x$,
and consider the Markov chain $X_0, X_2, X_4, \dots$
The transition matrix for this chain restricted to even states that are strictly positive is aperiodic, and
\begin{alignat}{2}
\lim_{n \to \infty}\prs{2x}{X_{2n} = y \given X_{2n} \geq 1} &=
\frac{\pi^*(y)}{\pi^*(2 \NN)} \qt{for $y \in 2\NN$.} \label{eqn:GamblersRuinAperiodic}
\end{alignat}

\renewcommand\thefootnote{\arabic{footnote}}
\setcounter{footnote}{0}

Notice that this limiting conditional distribution does not depend on the
starting state $2x$.  Whether this holds true in general for irreducible,
aperiodic sub-Markov chains is, or was, an open question.  There is neither a
proof\footnote{This is not exactly true; see the second paragraph of Section~4
	of~\cite{JackaRoberts}.} that if the limiting conditional distributions starting from different
states exist, they must be equal, nor an example showing that they might not
be.  This paper fills that gap.  We construct an example where every starting
state $x$ leads to a different limiting conditional behavior $\pi_x$.

More precisely, consider a sub-Markov chain $X_0, X_1, \dots$ on a countably
infinite state space $S$.  By a \emph{sub-Markov} chain, we mean that the
one-step transition matrix $K$ between states in $S$ is
substochastic; that is, $K(x,S) \coloneqq \sum_{y \in S} K(x,y) \leq 1$.
\emph{Strictly substochastic} means that there is at least one row $x$ such
that $ K(x, S) < 1$.  The missing mass can be thought of as representing a
transition to an absorbing state $\delta \not\in S$.  $K$ is \emph{irreducible}
if for any $x,y \in S$, there exists an $n = n(x,y)$ such that $K^n(x,y) > 0$
where $K^n$ is the matrix of $n$-step transition probabilities.  $K$ is
\emph{aperiodic} if $d = 1$ where $d = \gcd\set{n > 0 : K^n(x,x) > 0}$, which
does not depend on $x$ when $K$ is irreducible.  If $d > 1$, then $K$ is
periodic with period $d$.

We construct examples
where every starting state $x$ leads to a different limiting conditional distribution
$\pi_x$  even though the transition matrix is irreducible and aperiodic.  That is,
\begin{alignat}{2}
\prs{x}{X_n = y \given X_n \in S} &\to \pi_x(y) \qt{for $y \in S$,} \label{eqn:yaglom1}
\end{alignat}
but $\pi_x$ is different for every $x \in S$.

The paper is organized as follows.
The first several subsections of \autoref{sec:character} define some standard terms.
In
\autoref{sec:aperiodic-Yaglom} we define a limiting condition
distribution---called the  Yaglom limit---for
the aperiodic case.
Our definition is slightly unusual since we
explicitly allow the possible dependence upon the starting state by including the
starting state $x$ in the r.h.s.\ of \eqref{eqn:YaglomLimit}.  
We prove several
basic results that follow from the existence of a Yaglom limit in the
aperiodic case for a fixed starting state $x$.
Although Kesten's strong ratio limit property (SRLP) may not hold,
we show that
a generalized strong ratio limit property (GSRLP)
holds when a Yaglom limit exists for each starting
state---though each Yaglom limit may be different.

In \autoref{sec:aperiodic-Yaglom} we turn our attention to periodic Yaglom limits
where $K$ is periodic with period $d > 1$.  Many of our examples, e.g.,
the gambler's ruin problem, are periodic.
The definition of a periodic Yaglom limit is given in
\eqref{eqn:PeriodicYaglomLimit0} and again the possible dependence on the starting state
is explicit.  In addition, the definition includes the sequence of subsets
of the state space that the
process cycles through.
A periodic Yaglom limit
requires that $d$ different limits hold.
After proving
some basic properties that follow from the existence of a periodic Yaglom
limit,
we establish a series of results that greatly simplify the process of establishing a
periodic Yaglom limit.

Section~\ref{sec:dualwrev} describes a duality between $t$-invariant
measures and $t$-harmonic functions related to reversibility.  A more general
such duality is described in \autoref{sec:dualworev}. 
Section~\ref{sec:identities} describes an idea that allows a variety of 
useful identities to be derived.  
Section~\ref{sec:examples} contains a variety of examples including our primary
example dubbed the ``hub-and-two-spoke example.''  The hub-and-two-spoke
example provides an excellent medium for exploring the connections 
between Martin boundary theory---particularly, the $\rho$-Martin entrance
boundary theory---and Yaglom limits that may depend on the initial state.

Instead of reading \autoref{sec:character} next, we encourage the reader to
look at the definitions of a Yaglom limit \eqref{eqn:YaglomLimit} and a
periodic Yaglom limit \eqref{eqn:PeriodicYaglomLimit0} and then immediately jump to the
hub-and-two-spoke example in \autoref{sec:examples}. Theorem~\ref{mainone}
shows that the periodic Yaglom limit
of the hub-and-two-spoke example depends on the initial state. 
This and the other examples will motivate the results in
the sections initially skipped.  We refer back to the results in the skipped sections as we
need them when analyzing the examples.  



\section{Characterizing Yaglom limits}\label{sec:character}
Throughout, we assume that $K$ is an  irreducible, substochastic matrix.
Our goal in
this section is to characterize the Yaglom limits \eqref{eqn:yaglom1} as quasi-stationary distributions for both the aperiodic and periodic cases.

Let $\zeta$ be the exit time from $S$, also known as the time of absorption.  Notice that
\begin{alignat*}{2}
\prs{x}{X_n \in S} &=  K^n(x,S) = \prs{x}{\zeta > n},\\
\intertext{and, since $\delta$ is absorbing}
\prs{x}{X_n = y \given X_n \in S} &= \frac{K^n(x,y)}{ K^n(x, S)} \qt{for all $y \in S$.}
\end{alignat*}
From \emph{Scheff\'{e}'s Theorem}~\cite[Theorem~16.11]{Billingsley}, \eqref{eqn:yaglom1} implies convergence in total variation.  We frequently appeal to the corollary of {Scheff\'{e}'s Theorem} to conclude that
\begin{alignat*}{2}
\E_x[f(X_n) \given X_n \in S] &\to \pi_x f
\equiv \sum_{y \in S} \pi_x(y) f(y).
\end{alignat*}
whenever $f$ is a bounded function on $S$.

\subsection{Quasi-stationary distributions}
We think of a distribution $\pi$ as a nonnegative row vector with elements $\pi(y)$ for $y \in S$ that sum to $\pi(S) \leq 1$.  For $\pi$ to be a \emph{quasi-stationary distribution}, we need $\pi(S) = 1$, and
\begin{alignat}{2}
\prs{\pi}{X_n = y \given X_n \in S} &= \pi(y) \qt{for all $n \geq 0$} \label{eqn:quasi}
\end{alignat}
where $\P_\pi$ is the distribution of the chain when $X_0$ is given the distribution $\pi$.
Irreducibility implies that a quasi-stationary distribution for $K$ must be strictly positive.  If $K$ is also strictly substochastic, then $0 < \prs{\pi}{X_1 \in S} < 1$.
We will use the abbreviation \qsd\  for quasi-stationary distribution, and \qsds\  for the plural.

\subsection{Invariant and excessive measures and \qsds}
If we set $n = 1$ in \eqref{eqn:quasi} and multiply both sides by $t \coloneqq \prs{\pi}{X_1 \in S}$, we have
$\pi K = t \pi$
where $0 < t \leq 1$.  
If $K$ is strictly substochastic, the factor $t$ shrinks $\pi$ to account for the missing mass.  
Thus, $\pi$ is a positive left eigenvector for the eigenvalue $t$ and will be called a \emph{$t$-invariant} \qsd.  
Similarly, a measure $\sigma$ on $S$ is $t$-invariant if
$0 \leq \sigma(x) < \infty$ for all $x \in S$ and 
$\sigma K = t \sigma$.  
If 
If  $0 \leq \sigma(x) < \infty$ for all $x \in S$ and 
$\sigma K \leq t \sigma$
then $\sigma$ is $t$-excessive.
It follows from irreducibility that a non-degenerate
$t$-invariant measure with $t > 0$ must be strictly positive.  
Invariant measure means the same as $1$-invariant measure. 

We are primarily interested in $\rho$-invariant \qsds\  where $\rho = 1/R$ and
$R$ is the (common) radius of convergence of the generating functions
\begin{alignat}{2}
	G_{x,y}(z) \equiv \sum_{n \geq 0} K^n(x,y)z^n.
  \label{eqn:GenFun}
\end{alignat}
 Seneta~\cite[Theorem~6.1]{SenetaMatrices}
  has a proof that the radius of convergence
of $G_{x,y}(z)$ is the same for every $x,y$ and also that $R < \infty$.
In addition, since $K$ is substochastic,
$1 \leq R < \infty$.  $R$ is called the \emph{convergence parameter} of $K$,
and $\rho = 1/R$ the convergence norm or spectral radius.  Either   $G_{x,y}(R)
< \infty$ for all $x,y$, or $G_{x,y}(R) = \infty$ for all $x,y$.  In the
former case, $K$ is said to be $R$-transient, and in the latter case,
$R$-recurrent.  The $R$-recurrent case tends to be far more tractable
since there is at most one $\rho$-invariant measure.

$K$ may have $t$-invariant \qsds\  for $t \neq \rho$.  For example, a
consequence of Theorem~4.2 in van~Doorn and
Schrijner~\cite{vDSstationarity1995} is that the gambler's ruin problem described in the
Introduction has a unique $t$-invariant \qsd\  for every $t \in [\rho, 1)$.
  Our reason for focusing on \rhoi \qsds\ is that Proposition~\ref{prop:rhoqsd}
  implies that only  \rhoi \qsds\ can describe the limiting conditional
  behavior when the initial distribution is concentrated on a single state.  If
  the initial distribution is allowed to have an infinite support, any $t$-invariant
  \qsd\ can be a limiting conditional distribution.  
  We give examples where each
  starting state $x$ results in a different limiting conditional distribution
  chosen from an infinite family of \rhoi \qsds.



Seneta and Vere-Jones~\cite{Vere-Jones-Seneta} allow for the possibility that an
irreducible, substochastic matrix could have multiple $\rho$-invariant measures. The
$\rho$-Martin entrance boundary theory~\cite{Dynkin} describes the
cone of such measures---more on that in \autoref{boundary}.  Nonetheless,
we do not know of any earlier examples of Yaglom limits that depend on the starting state.

\subsection{Harmonic and superharmonic functions}
Analogous to $t$-invariant and $t$-excessive measures, we have $t$-harmonic and
$t$-superharmonic functions.  A real-valued function $h$ on $S$ will be $t$-superharmonic if $h
\geq 0$ and $Kh \leq th$.  
To avoid continually writing ``nonnegative,'' we have included nonnegativity as part of
the definition of $t$-superharmonic.  
If in addition, $Kh = th$, then $h$ is $t$-harmonic.
Due to irreducibility,
a non-degenerate $t$-superharmonic function with $t > 0$ must be strictly
positive.  
Harmonic and superharmonic mean the same as $1$-harmonic and $1$-superharmonic,
resp.

\subsection{The aperiodic case}\label{sec:aperiodic-Yaglom}

A proper probability distribution $\pi_x$ describes the limiting conditional behavior starting from state $x \in S$ if \eqref{eqn:yaglom1} holds, or equivalently,
\begin{alignat}{2}
\frac{K^n(x,y)}{ K^n(x, S)} &\to \pi_x(y) \qt{for all $y \in S$}. \label{eqn:YaglomLimit}
\end{alignat}
where $\pi_x$ is a proper probability distribution on $S$.
We refer to $\pi_x$ as  being the \emph{limiting conditional distribution} or as being the \emph{Yaglom limit}. 
Our use of the term Yaglom limit is slightly nonstandard since the r.h.s.\ may
depend on the starting state $x$.

\begin{lemma}\label{limitofratios}
Let $K$ be irreducible, aperiodic, and substochastic.
If $\pi_x$ is a \emph{Yaglom limit} as in \eqref{eqn:YaglomLimit}, then the following hold:
\begin{alignat}{2}
\lim_{n\to\infty}\frac{K^{n+1}(x,S)}{ K^n(x, S)} &= \rho \label{eqn:asympsurvivalprob} \\
\lim_{n\to\infty}\frac{K^{n+1}(x,y)}{ K^n(x, y)}&=\rho \label{eqn:ratio} \qt{for all $y \in S$.}
\end{alignat}
%
%
\end{lemma}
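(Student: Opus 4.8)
The plan is to exploit the corollary of Scheff\'e's Theorem stated above, which lets us pass limits through sums against bounded test functions. Write $s_n \coloneqq K^n(x,S)$ for the survival probability. First I would take $f(z) \coloneqq K(z,S)$, which is bounded by $1$. Since $\sum_{z \in S} K^n(x,z) K(z,S) = K^{n+1}(x,S)$, the corollary yields
\[
\frac{K^{n+1}(x,S)}{K^n(x,S)} = \E_x[K(X_n,S) \given X_n \in S] \longrightarrow \sum_{z \in S} \pi_x(z) K(z,S) = (\pi_x K)(S),
\]
and I denote this limit by $t$. This already produces a candidate for the common limit; it remains to identify $t$ with $\rho$ and to pass from $K^{n+1}(x,S)/K^n(x,S)$ to the individual entries.

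Next I would identify $t$ as an eigenvalue. Applying the same corollary with $f(z) \coloneqq K(z,y)$ for a fixed $y$ gives $K^{n+1}(x,y)/K^n(x,S) \to (\pi_x K)(y)$. On the other hand, factoring $K^{n+1}(x,y)/K^n(x,S) = [K^{n+1}(x,y)/K^{n+1}(x,S)] \cdot [K^{n+1}(x,S)/K^n(x,S)]$ and using the Yaglom limit \eqref{eqn:YaglomLimit} at index $n+1$ together with the previous step shows that the same quantity tends to $\pi_x(y)\,t$. Hence $\pi_x K = t\,\pi_x$, so $\pi_x$ is a $t$-invariant measure. Because $\pi_x$ sums to $1$ while $K(z,S) > 0$ for every $z$ by irreducibility, we get $t = (\pi_x K)(S) > 0$; the irreducibility fact quoted above then forces the non-degenerate $t$-invariant $\pi_x$ to be strictly positive.

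With $\pi_x(y) > 0$ in hand, the per-entry ratio \eqref{eqn:ratio} follows from the factorization
\[
\frac{K^{n+1}(x,y)}{K^n(x,y)} = \frac{K^{n+1}(x,y)}{K^{n+1}(x,S)} \cdot \frac{K^{n+1}(x,S)}{K^n(x,S)} \cdot \frac{K^n(x,S)}{K^n(x,y)} \longrightarrow \pi_x(y) \cdot t \cdot \frac{1}{\pi_x(y)} = t,
\]
each factor converging by \eqref{eqn:YaglomLimit} and the first step (here aperiodicity and irreducibility guarantee $K^n(x,y) > 0$ for all large $n$, so the ratios are eventually well defined). Finally, to prove $t = \rho$ I would invoke Cauchy--Hadamard: since $K^{n+1}(x,y)/K^n(x,y) \to t$, the root limit $\lim_n K^n(x,y)^{1/n}$ exists and equals $t$, whereas by the definition of $R$ via \eqref{eqn:GenFun} we have $\limsup_n K^n(x,y)^{1/n} = 1/R = \rho$. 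Since the ordinary limit exists it coincides with the $\limsup$, giving $t = \rho$, and therefore both \eqref{eqn:asympsurvivalprob} and \eqref{eqn:ratio} hold.

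The step I expect to be the main obstacle is the justification of the limit interchanges in the infinite sums $\sum_{z} (K^n(x,z)/K^n(x,S))\,K(z,\cdot)$; this is exactly what the total-variation convergence furnished by Scheff\'e's Theorem buys us, so the real content is recognizing that $K(\cdot,S)$ and $K(\cdot,y)$ are bounded test functions. The only other delicate point is securing $\pi_x(y) > 0$ before dividing by it, which I would obtain from $t$-invariance and irreducibility rather than assuming it outright.
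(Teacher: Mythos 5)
Your proposal is correct and follows essentially the same route as the paper's: apply the corollary of Scheff\'e's Theorem to the bounded function $K(\cdot,S)$ to get the survival-ratio limit $t=\sum_z\pi_x(z)K(z,S)$, reduce the per-entry ratios to it via the Yaglom limit, and identify $t=\rho$ by passing from the ratio test to the root test and comparing with the radius of convergence of $G_{x,y}$. The only (minor) divergence is that you first establish $\pi_x K = t\,\pi_x$ so that irreducibility yields $\pi_x(y)>0$ before dividing by it --- a positivity point the paper's proof uses implicitly --- and you invoke Cauchy--Hadamard directly where the paper cites Vere-Jones's Theorem~A; the substance is the same.
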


\begin{remark} 
	Recall that $\zeta$ is the exit time from $S$. 
	If the initial distribution is a $t$-invariant \qsd, then $\zeta$ is a 
	geometric random variable, and the conditional probability of remaining 
	in $S$ for one more step is always $t$.  
	Call $\prs{x}{\zeta > n + 1 \given \zeta > n}$ the survival probability 
	at age $n$ starting from state $x$.  
	Thus, \eqref{eqn:asympsurvivalprob} 
	states that the 
	(one-step) survival probability starting from state $x$ is 
	asymptotically $\rho$.  
	Consequently, if $\pi_x$ in 
	\eqref{eqn:YaglomLimit} is a $t$-invariant \qsd, then 
	\eqref{eqn:asympsurvivalprob} implies $t = \rho$.
\end{remark}

\begin{remark} 
	Kesten~\cite{KestenSubMarkov} proves \eqref{eqn:ratio} in Lemma~4 
	under a uniform
	aperiodicity assumption. He also gives some of the earlier
	history of this result.  
	Equation~\eqref{eqn:asympsurvivalprob} can be proven with an argument 
	similar to Kesten's proof of Lemma~4 
	assuming uniform
	aperiodicity and the existence of a $\rho$-excessive probability
	measure $\mu$ (that is, $\mu K \leq \rho \mu$) 
	\cite{McFoleyII}.  These results are
	quite general, and do not require a Yaglom limit to hold.  If, however,
	a Yaglom limit holds, then we have the following simple proof.
%
%
\end{remark}

\begin{proof}[Proof of Lemma~\ref{limitofratios}]
Since $K(y,S)$ is a bounded function in $y$, we can use the corollary to {Scheff\'{e}'s Theorem} to show that
\begin{alignat*}{2}
\frac{K^{n+1}(x,S)}{ K^{n}(x, S)}&=\sum_{y \in S}\frac{K^{n}(x,y)}{K^{n}(x,S)} K(y, S) \\
&\to \sum_{y \in S}\pi_x(y) K(y, S) \\
&\eqqcolon L \in (0,1].
\end{alignat*}
If we think of the l.h.s.\ of \eqref{eqn:YaglomLimit} as being of the form
$a_n/b_n$, we have just argued that $b_{n + 1}/b_n \to L$. From
\eqref{eqn:YaglomLimit}, we know that $(a_{n + 1}/a_n)(b_n/b_{n + 1}) \to 1$;
hence, the ratios $a_{n + 1}/a_n$ must also converge to $L$. To finish the
proof, we need only show that $L = \rho$.  

Since the root test is stronger than the ratio test, it follows that $a_n^{1/n}
\to L$.  
If $K$ is aperiodic, Theorem~A of~\cite{Vere-Jones} implies 
that $[K^{n}(x,y)]^{1/n} \to \rho$. 
Since $K^n(x,y) = a_n$, we know that $L = \rho$.  
%
\end{proof}

\begin{proposition}\label{prop:rhoqsd}
Let $K$ be irreducible, aperiodic, and substochastic.
If $\pi_x$ is a Yaglom limit as in \eqref{eqn:YaglomLimit}, then
$\pi_x$ is a $\rho$-invariant \qsd.
\end{proposition}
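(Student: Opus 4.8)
The plan is to verify the two defining requirements of a \rhoi \qsd\ separately: that $\pi_x$ is a probability distribution, and that it is $\rho$-invariant in the sense $\pi_x K = \rho \pi_x$. The first is immediate, since by hypothesis the Yaglom limit $\pi_x$ in \eqref{eqn:YaglomLimit} is a proper probability distribution, so $\pi_x(S) = 1$. Thus the entire content lies in establishing the left-eigenvector relation $\pi_x K = \rho \pi_x$; notably, the fact that the eigenvalue is forced to equal $\rho$ (rather than some other $t$) will fall out of the computation rather than needing to be imposed separately.

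First I would write the one-step decomposition
\[
\frac{K^{n+1}(x,y)}{K^n(x,S)} = \sum_{z \in S} \frac{K^n(x,z)}{K^n(x,S)}\, K(z,y),
\]
valid for every $y \in S$ by the Chapman--Kolmogorov equations, and then pass to the limit on each side. On the left, factoring as $\frac{K^{n+1}(x,y)}{K^{n+1}(x,S)} \cdot \frac{K^{n+1}(x,S)}{K^n(x,S)}$ and invoking the Yaglom limit \eqref{eqn:YaglomLimit} (evaluated at index $n+1$) together with \eqref{eqn:asympsurvivalprob} from Lemma~\ref{limitofratios} yields the limit $\rho\,\pi_x(y)$. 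On the right, the summand is $\prs{x}{X_n = z \given X_n \in S}\, K(z,y)$, so the sum equals $\E_x[K(X_n, y) \given X_n \in S]$.

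The main obstacle is justifying the interchange of limit and infinite sum on the right-hand side, since the pointwise convergence $K^n(x,z)/K^n(x,S) \to \pi_x(z)$ does not by itself permit term-by-term passage to the limit. Here I would appeal to the corollary of Scheff\'e's Theorem already recorded above: because $z \mapsto K(z,y)$ is a bounded function on $S$ (bounded by $1$), the total-variation convergence guaranteed by Scheff\'e's Theorem forces $\E_x[K(X_n, y) \given X_n \in S] \to \sum_{z \in S} \pi_x(z) K(z,y) = \pi_x K(y)$. Equating the two limits then gives $\pi_x K(y) = \rho\,\pi_x(y)$ for every $y \in S$, that is, $\pi_x K = \rho \pi_x$.

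Finally, I would observe that $\pi_x K = \rho \pi_x$ together with $\pi_x(S) = 1$ is exactly the characterization of a \rhoi \qsd\ discussed above: iterating the eigenvector relation gives $\pi_x K^n = \rho^n \pi_x$, whence $\prs{\pi_x}{X_n = y} = \rho^n \pi_x(y)$ and $\prs{\pi_x}{X_n \in S} = \rho^n$, so that $\prs{\pi_x}{X_n = y \given X_n \in S} = \pi_x(y)$ for all $n \geq 0$, which is precisely \eqref{eqn:quasi}. Hence $\pi_x$ is a \rhoi \qsd, completing the proof.
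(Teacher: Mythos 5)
Your proof is correct and follows essentially the same route as the paper's: the one-step Chapman--Kolmogorov decomposition, Lemma~\ref{limitofratios} to identify the eigenvalue as $\rho$, and the corollary of Scheff\'e's Theorem to interchange the limit with the sum. The only cosmetic difference is that you normalize by $K^n(x,S)$ and invoke \eqref{eqn:asympsurvivalprob}, whereas the paper normalizes by $K^n(x,y)$ and invokes \eqref{eqn:ratio}; both are supplied by the same lemma.
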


\begin{proof}
We essentially repeat the proof of Theorem~3.2 in~\cite{Vere-Jones-Seneta}.
Starting from \eqref{eqn:ratio},
\begin{alignat*}{2}
\rho &= \lim_{n\to\infty}\frac{\sum_z K^{n}(x,z)K(z,y)}{ K^{n}(x,y)}\\
&= \lim_{n\to\infty}\left(\frac{K^n(x,S)}{ K^{n}(x,y)}\sum_z \frac{ K^{n}(x,z)}{K^n(x,S)} K(z,y)\right) \\
&= \frac{1}{\pi_x(y)}\sum_z\pi_x(z)K(z,y).
\end{alignat*}
In the last step, we used \eqref{eqn:YaglomLimit} twice, and we again used the corollary of {Scheff\'{e}'s Theorem} to interchange the limit and sum since $K(z,y)$ is a bounded function of $z$.
\end{proof}

%
%
%

\subsubsection{Kesten's strong ratio limit property (SRLP).}

For a nonnegative matrix $K$---not necessarily substochastic---Kesten~\cite{KestenSubMarkov} defines the \emph{strong ratio limit property} as the existence of a strictly positive constant $R$, a strictly positive function $h$ on $S$, and a strictly positive measure $\pi$ on $S$ such that
\begin{alignat*}{2}
\lim_{n \to \infty} \frac{K^{n + m}(u,v)}{K^n(x,y)} = R^{-m}\frac{h(u) \pi(v)}{h(x) \pi(y)}  \qt{for all states $u,v,x,y$ and all $m \in \ZZ$}.
\end{alignat*}

Our example in \autoref{subsec:ModEx} shows that an irreducible, aperiodic substochastic matrix may not have Kesten's strong ratio limit property.  We are able to prove a different property that our example does possess and will prove useful.
Let us say that a nonnegative matrix $K$ has the \emph{generalized strong ratio limit
	property (GSRLP)} if there exists a strictly positive constant $R$, a strictly positive function $h$, and \rhoi \qsds\ $\pi_x$ on $S$ for every $x \in S$  such that
\begin{alignat}{2}
\lim_{n \to \infty} \frac{K^{n + m}(u,v)}{K^n(x,y)} = R^{-m}\frac{h(u) \pi_u(v)}{h(x) \pi_x(y)}  \qt{for all states $u,v,x,y$ and all $m \in \ZZ$}. \label{eqn:wrlp}
\end{alignat}
Much of the remainder of this subsection is closely related to the proof of
Kesten's Theorem~2~\cite{KestenSubMarkov}.

If $K$ is substochastic, irreducible and \eqref{eqn:YaglomLimit} holds for a fixed state $x_0$, then $K^n(y,S)/K^n(x_0,S)$ is a bounded sequence in $n$ for each $y \in S$.  To see this, choose $m$ so that $K^m(x_0,y) > 0$, and then we have
\begin{alignat}{2}
K^m(x_0,y)\frac{K^n(y,S)}{K^n(x_0,S)} &\leq \frac{K^{n+m}(x_0,S)}{K^n(x_0,S)}
\to \rho^m \qt{using \eqref{eqn:asympsurvivalprob}.}
\end{alignat}
Since $K^n(y,S)/K^n(x_0,S)$ is bounded, we can choose a convergent subsequence $\mathcal{N}(y)$ for each $y$, and define the subsequential limit
\begin{alignat}{2}
 \hat{h}(y) &\coloneqq \lim_{\substack{n \to \infty \\ n \in \mathcal{N}(y)}} \frac{ K^n(y,S)}{K^n(x_0,S)} \label{eqn:lambda} \qt{for all $y \in S$}.
 \intertext{If \eqref{eqn:lambda} holds for all subsequences, that is , if}
 \hat{h}(y) &= \lim_{n \to \infty} \frac{ K^n(y,S)}{K^n(x_0,S)} \qt{for all $y \in S$} \label{eqn:SurvivalRatio}
\intertext{then}
\frac{\hat{h}(y)}{\hat{h}(z)} &= \lim_{n \to \infty} \frac{K^n(y,S)}{K^n(z,S)}  \qt{for all $y,z \in S$}
\end{alignat}
and $\hat{h}(x_0) = 1$.
When \eqref{eqn:SurvivalRatio} holds, we will say that the Jacka-Roberts condition 
\cite[Lemma~2.3]{JackaRoberts} holds.  

\begin{proposition}\label{prop:aperiodicQSD}
Let $K$ be irreducible, aperiodic, and substochastic.
If \eqref{eqn:YaglomLimit} holds for every $x \in S$ and the Jacka-Roberts
condition \eqref{eqn:SurvivalRatio} holds,
then the generalized ratio limit property \eqref{eqn:wrlp} holds, and $\hat{h}$ is
$\rho$-superharmonic.
If in addition the support of $K(x,\cdot)$ is finite for every $x\in S$, then $\hat{h}$ is $\rho$-harmonic.
\end{proposition}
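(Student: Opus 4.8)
The plan is to derive the generalized strong ratio limit property \eqref{eqn:wrlp} by splitting the ratio $K^{n+m}(u,v)/K^n(x,y)$ into a product of five factors, each of which converges by a result already established. Introducing the survival functions and the reference state $x_0$, write
\begin{alignat*}{2}
\frac{K^{n+m}(u,v)}{K^n(x,y)}
&= \frac{K^{n+m}(u,v)}{K^{n+m}(u,S)}
\cdot \frac{K^{n+m}(u,S)}{K^n(u,S)}
\cdot \frac{K^n(u,S)}{K^n(x_0,S)}
\cdot \frac{K^n(x_0,S)}{K^n(x,S)}
\cdot \frac{K^n(x,S)}{K^n(x,y)}.
\end{alignat*}
As $n \to \infty$ (with $m$ fixed, so that $n+m\to\infty$ as well) the first factor tends to $\pi_u(v)$ and the last to $1/\pi_x(y)$ by the Yaglom limit \eqref{eqn:YaglomLimit}; the middle factor is a telescoping product of one-step survival ratios and tends to $\rho^m$ by \eqref{eqn:asympsurvivalprob} (for $m<0$ one takes reciprocals of such a product); and the third and fourth factors tend to $\hat{h}(u)$ and $1/\hat{h}(x)$ by the Jacka--Roberts condition \eqref{eqn:SurvivalRatio}. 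All of these limits are finite and positive: the $\pi_x(y)$ are strictly positive because, by Proposition~\ref{prop:rhoqsd}, each $\pi_x$ is a $\rho$-invariant \qsd, while $\hat{h}>0$ follows from the two-sided version of the boundedness estimate preceding \eqref{eqn:lambda} (bound $K^n(x_0,S)/K^n(y,S)$ above by choosing $m'$ with $K^{m'}(y,x_0)>0$ and applying \eqref{eqn:asympsurvivalprob} at $y$). Hence the product converges to the product of the limits, which, since $\rho=R^{-1}$, is exactly the right-hand side of \eqref{eqn:wrlp} with $h=\hat{h}$.

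To show $\hat{h}$ is $\rho$-superharmonic, I would use the one-step decomposition $K^{n+1}(y,S)=\sum_z K(y,z)K^n(z,S)$. Dividing by $K^n(x_0,S)$ gives
\begin{alignat*}{2}
\frac{K^{n+1}(y,S)}{K^n(x_0,S)} &= \sum_z K(y,z)\,\frac{K^n(z,S)}{K^n(x_0,S)}.
\end{alignat*}
On the left I factor as $\frac{K^{n+1}(y,S)}{K^{n+1}(x_0,S)}\cdot\frac{K^{n+1}(x_0,S)}{K^n(x_0,S)}$; the first piece tends to $\hat{h}(y)$ (the Jacka--Roberts limit is unaffected by the harmless one-step reindexing) and the second to $\rho$ by \eqref{eqn:asympsurvivalprob}, so the whole left-hand side tends to $\rho\,\hat{h}(y)$. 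On the right, Fatou's lemma gives $\liminf_n \sum_z K(y,z)\frac{K^n(z,S)}{K^n(x_0,S)} \geq \sum_z K(y,z)\hat{h}(z)=(K\hat{h})(y)$. Since the left-hand side actually converges, this yields $\rho\,\hat{h}(y)\geq(K\hat{h})(y)$, which together with $\hat{h}\geq 0$ says precisely that $K\hat{h}\leq\rho\hat{h}$.

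The main obstacle is exactly this Fatou step: interchanging the limit with the infinite sum $\sum_z K(y,z)(\cdot)$ is only valid as an inequality, since mass can escape to infinity along the sequence, and that is precisely why one obtains superharmonicity rather than harmonicity in general. This obstacle disappears when the support of $K(y,\cdot)$ is finite for every $y$: the sum then has only finitely many terms, termwise convergence holds, and
\begin{alignat*}{2}
\lim_{n\to\infty}\sum_z K(y,z)\,\frac{K^n(z,S)}{K^n(x_0,S)} &= \sum_z K(y,z)\hat{h}(z) = (K\hat{h})(y)
\end{alignat*}
with equality. Equating this with the left-hand limit $\rho\,\hat{h}(y)$ computed above gives $K\hat{h}=\rho\hat{h}$, so $\hat{h}$ is $\rho$-harmonic, completing the proof.
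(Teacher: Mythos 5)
Your proof is correct and follows essentially the same route as the paper's: the ratio limit property is obtained by factoring $K^{n+m}(u,v)/K^n(x,y)$ into convergent pieces controlled by the Yaglom limits, Lemma~\ref{limitofratios}, and the Jacka--Roberts condition (the paper uses \eqref{eqn:ratio} where you use a telescoped \eqref{eqn:asympsurvivalprob}, a cosmetic difference), and superharmonicity versus harmonicity is settled exactly as in the paper by Fatou's lemma on the one-step decomposition of $K^{n+1}(y,S)$, with the inequality upgraded to equality under the finite-support hypothesis.
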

\begin{remark}
	The periodic example in \autoref{subsec:remlife} can be
	modified to show that $\hat{h}$ may not be $\rho$-harmonic if the
	finite support assumption does not hold.  
	The Jacka-Roberts condition 	
	\eqref{eqn:SurvivalRatio} plays a key role; 
	throughout we use $\hat{h}$ to
	designate this survival ratio.
\end{remark}
\begin{proof}[Proof of Proposition~\ref{prop:aperiodicQSD}]

First, we argue that $\hat{h}$ is strictly positive.  We know that $\hat{h}(x_0) = 1$.  Choose $m$ such that $K^m(x,x_0) > 0$.  Now,
\begin{alignat*}{2}
{K^{m + n}(x,S)} &= \sum_z K^m(x,z)K^n(z,S) \\
\frac{K^{m + n}(x,S)}{K^{m + n}(x_0,S)} &= \sum_z K^m(x,z) \frac{K^n(z,S)}{K^n(x_0,S)} \frac{K^n(x_0,S)}{K^{m + n}(x_0,S)} \qt{let $n \to \infty$,}\\
\hat{h}(x) &\geq \sum_z K^m(x,z) \hat{h}(z) R^m \qt{from Fatou's lemma} \\
&\geq K^m(x,x_0) \hat{h}(x_0) R^m \\
&> 0
\end{alignat*}
since we already know that $R = \rho^{-1} > 0$.

Next, to see that \eqref{eqn:wrlp} holds,
\begin{alignat*}{2}
\frac{K^{m + n}(u,v)}{K^n(x,y)} &=\frac{K^{m + n}(u,v)}{K^n(u,v)}\frac{K^n(u,v)}{K^n(u,S)}\frac{K^n(u,S)}{K^n(x,S)}
\frac{K^n(x,S)}{K^n(x,y)}\to \rho^m \frac{\pi_u(v) \hat{h}(u)}{\pi_x(y) \hat{h}(x)}
\end{alignat*}
where Proposition~\ref{prop:rhoqsd} guarantees that $\pi_x$ is a \rhoi \qsd\ for every $x \in S$.

To show that $\hat{h}$ is $\rho$-harmonic when the support of $K(x,\cdot)$ is finite, return to the argument at the beginning of this proof, and set $m = 1$.  Instead of using Fatou's lemma, use the finite support to replace the first inequality with equality, which shows that $\hat{h}$ is $\rho$-harmonic.
\end{proof}

Even if $\hat{h}$ is known to be
$\rho$-harmonic, it can still be difficult to determine $\hat{h}$ since
$K$ may have other $\rho$-harmonic functions
as our
hub-and-two-spoke example illustrates; see \eqref{eqn:harmonic}.
Often, Kesten's Theorem~1~\cite{KestenSubMarkov} can help since it gives conditions guaranteeing a unique, up to multiplicative constants, $\rho$-harmonic function.

One vexing open question is that we do not know whether the existence of a Yaglom
limit starting from a particular state $x$ implies the existence of Yaglom limits starting from
other states.  

\subsection{Periodic Yaglom limits \& \qsds}\label{sec:dotheperiodic}
Fix the starting state $x \in S$.  If the period $d >
1$, then we can partition $S$ into $d$ classes labeled $S_0, \dots, S_{d - 1}$
so that $y \in S_k$ iff $K^{n d + k}(x,y) > 0$ for $n$ sufficiently large.
Hence, $x \in S_0$, and $\event{X_{n d + k} \in S_k} = \event{X_{n d + k} \in
S}$.   For brevity, let $S_{k + j} \equiv S_{k + j \pmod d}$.  We will say that
$K$ has a \emph{periodic Yaglom limit starting from x} if for all $k \in
\set{0,\dots, d - 1}$ and all $y \in S_k$,
\begin{alignat}{2}
 \lim_{n \to \infty} \prs{x}{X_{nd + k} = y \given X_{n d + k} \in S}
 &=  \frac{\pi_x(y)}{\pi_x(S_k)}
  \label{eqn:PeriodicYaglomLimit0}
\end{alignat}
where $\pi_x$ is a probability measure on $S$ with $\pi_x(S) = 1$.
We will show $\pi_x$ must be a \rhoi \qsd.  Unlike when $K$ is stochastic, $\pi_x(S_k)$ may take values other than $1/d$.

Our starting point for the periodic case is slightly different than \eqref{eqn:yaglom1}.
Instead, we assume that there exists a $k$ such that
%
\begin{alignat}{2}
 \lim_{n \to \infty} \prs{x}{X_{nd + k} = y \given X_{n d + k} \in S}
 &= \pi^k_x(y) \qt{for all $y \in S_k$},
  \label{eqn:PeriodicYaglomLimit1}
 \intertext{or equivalently}
\lim_{n \to \infty} \frac{K^{n d + k}(x,y)}{ K^{n d + k}(x,S)}  &= \pi^k_x(y)
 \qt{for all $y \in S_k$}.
 \label{eqn:PeriodicYaglomLimit2}
\end{alignat}
where $\pi^k_x$ is a probability measure on $S$ with $\pi^k_x(S_k) = 1$.  If
\eqref{eqn:PeriodicYaglomLimit2} holds for some $k$, then
Proposition~\ref{periodicpro} shows that we have a periodic Yaglom limit
starting from $x$; that is, \eqref{eqn:PeriodicYaglomLimit0} holds for all $k$,
and $\pi_x$ is a \rhoi \qsd.

If a periodic Yaglom limit holds for some $x \in S_0$, we have not been able to prove
that a periodic Yaglom limit holds starting from some other state in $S_0$.  However, if
a periodic Yaglom limit holds for every state in $S_0$, then
Proposition~\ref{periodicproII} shows that a periodic Yaglom limit holds
starting from any state in $S$.

%

\begin{lemma}\label{periodicuseit}
Let $K$ be irreducible, substochastic, and periodic with period $d > 1$.
If \eqref{eqn:PeriodicYaglomLimit2} holds for some $k \in \set{0, \ldots, d -
	1}$, then
\begin{alignat}{2}
\lim_{n\to\infty}\frac{K^{(n+1)d+k}(x,S_k)}{ K^{nd+k}(x, S_k)} &= \rho^d \label{eqn:periodicL1}
\\
\lim_{n\to\infty}\frac{K^{(n+1)d+k}(x,y)}{ K^{nd+k}(x, y)} &= \rho^d \qt{for $y \in S_k$}.  \label{eqn:periodicL2}
\end{alignat}

\end{lemma}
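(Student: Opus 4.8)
The plan is to mimic the proof of Lemma~\ref{limitofratios}, but restricted to the periodic subsequence $\{nd + k\}$ and working one full period at a time. The key observation is that the chain observed along the subsequence $X_k, X_{d+k}, X_{2d+k}, \dots$ behaves like an aperiodic sub-Markov chain on $S_k$ with one-step transition matrix $K^d$ restricted to $S_k$. Since $y \in S_k$ means $K^{nd+k}(x,y) > 0$ for large $n$, and since $\event{X_{nd+k} \in S_k} = \event{X_{nd+k} \in S}$, the conditional distribution $K^{nd+k}(x,y)/K^{nd+k}(x,S_k)$ is exactly the left-hand side of \eqref{eqn:PeriodicYaglomLimit2}, which converges to $\pi^k_x(y)$ by hypothesis.

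First I would establish \eqref{eqn:periodicL1}. Writing out one more period,
\begin{alignat*}{2}
\frac{K^{(n+1)d+k}(x,S_k)}{K^{nd+k}(x,S_k)}
&= \sum_{y \in S_k} \frac{K^{nd+k}(x,y)}{K^{nd+k}(x,S_k)} K^d(y, S_k).
\end{alignat*}
Here $K^d(y,S_k)$ is a bounded function of $y$ (bounded by $1$), so the corollary to Scheff\'e's Theorem lets me pass to the limit term by term, giving convergence to $L \coloneqq \sum_{y \in S_k} \pi^k_x(y) K^d(y,S_k) \in (0,1]$. This proves \eqref{eqn:periodicL1} once I identify $L = \rho^d$.

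Next I would obtain \eqref{eqn:periodicL2} by the same ratio-versus-limit trick used in Lemma~\ref{limitofratios}. Setting $a_n \coloneqq K^{nd+k}(x,y)$ and $b_n \coloneqq K^{nd+k}(x,S_k)$, I have just shown $b_{n+1}/b_n \to L$, and \eqref{eqn:PeriodicYaglomLimit2} says $(a_{n+1}/a_n)(b_n/b_{n+1}) \to 1$; hence $a_{n+1}/a_n \to L$ as well. It remains only to show $L = \rho^d$. Because the root test is sharper than the ratio test, $a_n^{1/n} \to L$, i.e.\ $[K^{nd+k}(x,y)]^{1/n} \to L$. On the other hand, Theorem~A of~\cite{Vere-Jones} gives $[K^m(x,y)]^{1/m} \to \rho$ along the full sequence of exponents $m$ for which $K^m(x,y) > 0$; specializing to the subsequence $m = nd + k$ yields $[K^{nd+k}(x,y)]^{1/(nd+k)} \to \rho$, and raising to the power $(nd+k)/n \to d$ gives $[K^{nd+k}(x,y)]^{1/n} \to \rho^d$. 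Comparing the two limits forces $L = \rho^d$, completing both \eqref{eqn:periodicL1} and \eqref{eqn:periodicL2}.

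The only mild obstacle I anticipate is the bookkeeping in the exponents when translating the root-test limit (in $n$) into the Vere-Jones limit (in $m = nd+k$); one must be careful that $(nd+k)/n \to d$ and that the subsequential root limit equals the full-sequence root limit $\rho$, which holds because $\rho = \lim_m [K^m(x,y)]^{1/m}$ exists as a genuine limit under aperiodicity of $K^d$ on $S_k$. Everything else is a direct periodic transcription of the aperiodic argument.
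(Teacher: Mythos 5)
Your proposal is correct and follows essentially the same route as the paper's proof: the corollary to Scheff\'e's Theorem applied to the bounded function $K^d(y,S)$ to get the survival-ratio limit $L$, the $a_n/b_n$ ratio trick to transfer that limit to $K^{(n+1)d+k}(x,y)/K^{nd+k}(x,y)$, and the root-test-versus-ratio-test comparison with Theorem~A of Vere-Jones to identify $L=\rho^d$. The only cosmetic difference is that you sum over $S_k$ where the paper sums over $S$, which is equivalent since $K^{nd+k}(x,S)=K^{nd+k}(x,S_k)$ and $K^d(y,S)=K^d(y,S_k)$ for $y\in S_k$.
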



\begin{proof}

Using the corollary to Scheff\'{e}s Theorem,
\begin{alignat*}{2}
\frac{K^{(n+1)d+k}(x,S)}{ K^{nd+k}(x, S)}&=\sum_{y \in S}\frac{K^{nd+k}(x,y)}{K^{nd+k}(x,S)} K^d(y, S)\\
&\to \sum_{y \in S}\pi_x^k(y) K^d(y, S) \\
&\eqqcolon L \in (0,1].
\end{alignat*}
As in the proof of Lemma~\ref{limitofratios}, 
\eqref{eqn:PeriodicYaglomLimit2} implies that for any $y \in S_k$,
\begin{alignat*}{2}
\frac{K^{(n+1)d+k}(x,S)}{ K^{nd+k}(x, S)} &\sim \frac{K^{(n+1)d+k}(x,y)}{
K^{nd+k}(x, y)} \to L.
\end{alignat*}
Since the root test is stronger\footnote{See
Clark~\cite[11.5.3 and Corollary~255]{Clark2012}
or~\cite[2.5]{ClarkSeqSer} for a discussion.}
	than the ratio test, $[K^{nd+k}(x,y)]^{1/(nd)} \to L^{1/d}$, but
Theorem~A~\cite{Vere-Jones} implies that for any $y\in S_k$, $[K^{nd+k}(x,y)]^{1/(n d + k)} \to \rho$.  Hence, $L = \rho^d$.

\end{proof}

\begin{proposition}\label{periodicpro}
Let $K$ be irreducible, substochastic, and periodic with period $d > 1$.
	Recall that $x$ is a fixed state in $S_0$.
Suppose that \eqref{eqn:PeriodicYaglomLimit2} holds for some
$k \in \set{0, 1, \dots, d- 1}$.  Then \eqref{eqn:PeriodicYaglomLimit2} holds for all
$k \in \set{0, 1, \dots, d- 1}$. Moreover, 
there is a \rhoi \qsd\ $\pi_x$ such that
$\pi^k_x(y)=\pi_x(y)/\pi_x(S_k)$ for $y\in S_k$ for each $k \in \set{0, 1, \dots, d- 1}$. Hence,
\begin{alignat*}{2}
 \lim_{n \to \infty} \prs{x}{X_{nd + k} = y \given X_{n d + k} \in S}
 &= \frac{\pi_x(y)}{\pi_x(S_k)} \qt{for all $k$ and all $y \in S_k$}.
\end{alignat*}
In addition, the $\pi_x^k(y)$ in \eqref{eqn:PeriodicYaglomLimit2} can be iteratively computed by using
\begin{alignat}{2}
\pi_x^{k + 1 \pmod d}(y) &= \frac{1}{\rho_k(x)} \sum_{z \in S_k} \pi_x^k(z) K(z,y)
\label{eqn:itpiky} 
\intertext{where the probability of surviving for one more step given the
	current distribution is $\pi_x^k$ is given by}
\rho_k(x) &=  \sum_{z \in S_k} \pi_x^k(z) K(z,S)
\label{eqn:survivalk}
\end{alignat}
Furthermore, the \rhoi \qsd\ $\pi_x$ can be constructed as
\begin{alignat}{2}
	\pi_x &= \sum_{k = 0}^{d - 1} c_k \pi^k_x \label{eqn:pixdefn}
\end{alignat}
where
  \begin{alignat}{2}
	  c_0 &= \frac{1}{1 + \frac{\rho_0(x)}{\rho} + \frac{\rho_0(x)\rho_1(x)}{\rho^2} + \dots +  \frac{\rho_0(x)\rho_1(x) \dots \rho_{d - 2}(x)}{\rho^{d - 1}}} \label{eqn:c0defn} \\
	  c_k &= c_0 \frac{\rho_0(x) \dots \rho_{k - 1}(x)}{\rho^k} \quad\text{for $k \in \set{1, 2, \dots, d - 1}$} \label{eqn:ckdefn}
\end{alignat}
\end{proposition}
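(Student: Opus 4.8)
The plan is to bootstrap the single assumed convergence \eqref{eqn:PeriodicYaglomLimit2} at one level $k$ forward around the entire cycle, reading off the iterative formulas \eqref{eqn:itpiky}--\eqref{eqn:survivalk} as we go, and then to glue the $d$ sub-distributions $\pi^0_x, \dots, \pi^{d-1}_x$ into a single \rhoi \qsd\ of the form \eqref{eqn:pixdefn}. First I would push the limit one level forward. Writing $K^{nd+k+1}(x,y) = \sum_z K^{nd+k}(x,z)K(z,y)$ and dividing by $K^{nd+k}(x,S)$, the corollary to Scheff\'{e}'s Theorem (the summand is bounded in $z$) gives $K^{nd+k+1}(x,y)/K^{nd+k}(x,S) \to \sum_{z\in S_k}\pi^k_x(z)K(z,y)$. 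The same computation with $K(z,S)$ in place of $K(z,y)$ shows that the one-step survival ratio $K^{nd+k+1}(x,S)/K^{nd+k}(x,S)$ converges to $\rho_k(x) \coloneqq \sum_{z\in S_k}\pi^k_x(z)K(z,S)$, which is \eqref{eqn:survivalk}; because a state in $S_k$ can survive only into $S_{k+1}$, one checks $\rho_k(x)\in(0,1]$ and that the limiting measure has total mass one. Dividing the two limits identifies $K^{nd+k+1}(x,y)/K^{nd+k+1}(x,S)\to \rho_k(x)^{-1}\sum_{z\in S_k}\pi^k_x(z)K(z,y)$, which simultaneously establishes \eqref{eqn:PeriodicYaglomLimit2} at level $k+1$ and identifies the limit as the $\pi^{k+1}_x$ of the recursion \eqref{eqn:itpiky}. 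Iterating around the cycle gives \eqref{eqn:PeriodicYaglomLimit2} for every $k$.

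The linchpin is the cycle-consistency relation $\prod_{k=0}^{d-1}\rho_k(x)=\rho^d$. I would obtain it by telescoping the one-step survival ratios over one full period, $K^{(n+1)d+k}(x,S)/K^{nd+k}(x,S) = \prod_{j=0}^{d-1} K^{nd+k+j+1}(x,S)/K^{nd+k+j}(x,S)$, letting $n\to\infty$ so that each factor tends to $\rho_{k+j}(x)$, and comparing with \eqref{eqn:periodicL1} of Lemma~\ref{periodicuseit} (using $K^{nd+k}(x,S)=K^{nd+k}(x,S_k)$), which forces the product of the $\rho_{k+j}(x)$ to equal $\rho^d$.

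It then remains to construct $\pi_x$. I would set $\pi_x \coloneqq \sum_k c_k\pi^k_x$ with nonnegative coefficients $c_k$ summing to one, to be determined; since the supports $S_0,\dots,S_{d-1}$ are disjoint, $\pi_x(S_k)=c_k$ and $\pi^k_x(y)=\pi_x(y)/\pi_x(S_k)$ for $y\in S_k$. Reading \eqref{eqn:itpiky} as the identity of measures $\pi^k_x K = \rho_k(x)\pi^{k+1}_x$ on $S_{k+1}$, I get $\pi_x K = \sum_k c_k\rho_k(x)\pi^{k+1}_x = \sum_j c_{j-1}\rho_{j-1}(x)\pi^j_x$ after reindexing $j=k+1$; matching the restriction to each class $S_j$ against $\rho\pi_x$ (legitimate precisely because the supports are disjoint) forces the linear recursion $c_{j-1}\rho_{j-1}(x)=\rho c_j$, whose solution is \eqref{eqn:ckdefn} with $c_0$ fixed by normalization \eqref{eqn:c0defn}. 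The wrap-around instance $c_{d-1}\rho_{d-1}(x)=\rho c_0$ is exactly where the cycle-consistency relation is needed, and it holds because of it; hence $\pi_x K=\rho\pi_x$ with $\pi_x(S)=1$, so $\pi_x$ is a \rhoi \qsd\ and \eqref{eqn:PeriodicYaglomLimit0} follows.

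The hard part will not be any one computation but the bookkeeping that ties them together: seeing that the construction closes up consistently around the cycle only because the survival probabilities multiply to $\rho^d$, and justifying that $\rho$-invariance may be verified class-by-class thanks to the disjoint supports $S_0, \dots, S_{d-1}$.
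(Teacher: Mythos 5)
Your proposal is correct and follows essentially the same route as the paper's proof: push \eqref{eqn:PeriodicYaglomLimit2} forward one class at a time to obtain \eqref{eqn:itpiky} and \eqref{eqn:survivalk}, derive $\rho_0(x)\cdots\rho_{d-1}(x)=\rho^d$ by telescoping the one-step survival ratios against \eqref{eqn:periodicL1}, and then glue the $\pi^k_x$ with the weights $c_k$ and verify $\rho$-invariance class by class via $c_k\rho_k(x)/\rho=c_{k+1}$. The only cosmetic difference is that the paper first proves $s_{nd+k}\to\rho_k(x)$ with a total-variation estimate and divides inside the sum, whereas you divide two separately established limits, which is equally valid since $\rho_k(x)>0$.
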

\begin{remark}
Lemma~11 of Ferrari and Rolla~\cite{FerrariRolla}
	and Proposition~\ref{periodicpro}
are closely related.  Equations
\eqref{eqn:prodrho} and that $c_k \rho_k/\rho = c_{k +1}$
are in their Lemma~11.
\end{remark}
\begin{remark}\label{rem:rho}
In the following special case, $\rho_k(x)$ is easy to compute.
Let $\Delta \coloneqq \set{x \in S : K(x, S) < 1}$ be the set of possible exit states.
If $\Delta \subset S_j$ for some $j$, then
\begin{equation}
\rho_k(x) =
\begin{cases}
	\rho^d \quad\text{for $k = j$} \\
	1      \quad\text{for $k \neq j$}
\end{cases}
\end{equation}
which follows from \eqref{eqn:prodrho} below.
\end{remark}
\begin{proof}
	Recall that $\zeta$ is the exit time from $S$.  Let $s_n \equiv s_n(x) \equiv
  \prs{x}{\zeta > n + 1 \given \zeta > n}$ for $n \in \NN_0 \equiv \set{0} \cup
  \NN$.  Clearly, $s_n > 0$, and so is $\inf_n {s_n}$ since
  \eqref{eqn:periodicL1} implies that $\liminf_n s_n \geq \rho^d > 0$.

We now show that $s_{nd + k} \to \rho_k$ where $\rho_k \equiv \rho_k(x) = \sum_{z \in S_k}\pi^k_x (z) K(z,S_{k + 1})$.
\begin{alignat*}{2}
1 - \frac{\rho_k}{s_{nd + k}} &=\frac{\sum_{z \in S_{k}}K^{nd+k}(x,z) K(z,S_{k + 1})}{K^{nd+k}(x,S_{k}) s_{n d + k}  } - \frac{\sum_{z \in S_k} \pi^k_x (z) K(z,S_{k + 1}) }{s_{nd + k}} \\
&\leq \sum_{z \in S_k} \abs*{\frac{K^{n d + k} (x,z)}{K^{n d + k} (x,S_k)}  -  \pi^k_x(z) } \, \frac{K(z,S_{k + 1})} {s_{nd + k }}   \\
&\leq \sum_{z \in S_k} \abs*{\frac{K^{n d + k} (x,z)}{K^{n d + k} (x,S_k)}  -  \pi^k_x(z) } \, \frac{1}{\inf_n s_{nd + k}} \\
&\to 0.
\end{alignat*}
where we used \eqref{eqn:PeriodicYaglomLimit2}.

For brevity, write $k + 1$ for $k + 1 \pmod d$.
The next step is to establish that \eqref{eqn:PeriodicYaglomLimit2} holds when $k$ is replaced by $k+1$ and to relate $\pi^{k + 1}_x$ with $\pi^k_x$. For $y \in S_{k + 1}$,
\begin{alignat}{2}
\frac{K^{n d + k + 1}(x,y)}{K^{n d + k + 1}(x,S_{k + 1})} &= \sum_{z \in S_k} \frac{K^{n d + k}(x,z)}{K^{n d + k}(x,S_k)} K(z,y) \frac{1}{s_{n d + k}} \notag \\
&\to \sum_{z \in S_k} \pi^k_x(z) K(z,y) \frac{1}{\rho_k} \notag \\
&\eqqcolon \pi^{k + 1}_x(y). \label{eqn:pikp1}
\end{alignat}
Thus, $\pi^{k + 1}_x$ is a probability measure on $S$ with $\pi^{k + 1}_x (S_{k + 1}) = 1$, as can be seen by summing over $y \in S_{k + 1}$.  Thus, \eqref{eqn:itpiky} holds.

Now that we have established that \eqref{eqn:PeriodicYaglomLimit2} holds when
$k$ is replaced by $k + 1$, we can repeat the above arguments $d - 2$
additional times establishing that the above expressions in this proof hold for
all $k \in \set{0, \dots, d - 1}$.

Next, since
\begin{alignat}{2}
\frac{K^{(n + 1)d + k}(x, S_k)}{K^{n d + k}(x, S_k)} &= \prod_{j = 0}^{d - 1} s_{n d + k + j}, \notag \\
\rho^d &= \rho_0(x) \rho_1(x) \dots \rho_{d - 1}(x) \qt{as $n \to \infty$} \label{eqn:prodrho}
\end{alignat}
where we used \eqref{eqn:periodicL1}.

Though the notation suppresses the dependency,
$\rho_k$, $s_{nd + k}$
and the forthcoming $c_0, \dots, c_{d - 1}$ all depend on the
initial state being $x$.
Construct $c_0$, \dots, $c_{d - 1}$ as in \eqref{eqn:c0defn} and
\eqref{eqn:ckdefn}.  The constants are nonnegative, and $c_0 + \dots + c_{d -
1} = 1$.  Define $\pi_x$ as in \eqref{eqn:pixdefn}.  Since $\pi_x(S_k) = c_k$,
$\pi_x$ is a probability measure on $S$, and if $y \in S_k$, then $\pi^k_x (y)
= \pi_x(y)/\pi_x(S_k)$ for all $k$.

The last step is to show that $\pi_x$ is $\rho$-invariant.  We will use the fact that
$c_k \rho_k/\rho = c_{k + 1}$ and that
from \eqref{eqn:pikp1},
\begin{alignat*}{2}
\rho_k \pi^{k + 1}_x (y) = \sum_{z \in S_k} \pi^k_x(z) K(z,y)
\end{alignat*}
Let $y \in S$.  Since $y \in S$, there exists some $k = k(y)$ such that $y \in S_{k + 1}$.  Since
\begin{alignat*}{2}
\sum_{z \in S} \pi_x(z) K(z,y) &= \sum_{z \in S_k} c_k \pi^k_x(z) K(z,y) \\
&= c_k \rho_k \pi^{k + 1}_x (y) \\
&= \rho c_{k + 1} \pi^{k + 1}_x (y) \\
&= \rho \pi_x(y),
\end{alignat*}
$\pi_x$ is a \rhoi \qsd.
\end{proof}

If we
assume that a periodic Yaglom limit exists starting from every state in one class, say $S_0$,
then we can show that under certain conditions a periodic Yaglom limit will
exist starting from every state in $S$.  Since we will be assuming that a
periodic Yaglom limit exists starting from any state in $S_0$, we drop the
assumption that $x$ is a fixed starting state in $S_0$, which means that we need to be careful about quantities that depend on the starting state.
However, we will still need a fixed state $x_0$ in $S_0$ as a reference point for the following definition.
If the following limit exists, which is similar to \eqref{eqn:SurvivalRatio} except with $nd$ instead of $n$, then define
\begin{alignat}{2}
 \hat{h}_0(y) & \coloneqq \lim_{n \to \infty}
 \frac
 { K^{nd}(y,S_0)}
 {K^{nd}(x_0,S_0)}
 \qt{for all $y \in S_0$}. \label{eqn:SurvivalRatioPeriodic}
 \intertext{If \eqref{eqn:SurvivalRatioPeriodic} holds, then it follows that}
\frac{\hat{h}_0(y)}{\hat{h}_0(x)} &= \lim_{n \to \infty} \frac{K^{nd}(y,S_0)}{K^{nd}(x,S_0)} \notag \qt{for all $x,y \in S_0$}.
\end{alignat}
Equation~\ref{eqn:SurvivalRatioPeriodic} is the Jacka-Roberts condition
\eqref{eqn:SurvivalRatio} for the process watched on $S_0$.


In the following proposition when the starting state is in a class other than $S_0$, do not relabel the classes.
\begin{proposition}\label{periodicproII}
Let $K$ be irreducible, substochastic, and periodic with period $d > 1$.
If \eqref{eqn:PeriodicYaglomLimit1} holds for all $x \in S_0$,
\eqref{eqn:SurvivalRatioPeriodic} holds, and the support of $K(u, \cdot)$ is finite for all $u \in S$, then
there is a periodic Yaglom limit starting from any state in $S$.  More precisely, if $u \in S_j$ where $j \in \set{1, \dots, d - 1}$, then

\begin{alignat}{2}
\frac{K^{nd + k - j}(u,y)}{K^{nd + k - j}(u,S_k)} &\to
\frac{\pi_u(y)}{\pi_u(S_k)} \qt{for all $k$ and all $y \in S_k$} \label{eqn:p2}
\end{alignat}
where $\pi_u$ is a \rhoi \qsd.
In particular, for $k = 0$, the r.h.s.\ of \eqref{eqn:p2} is given by
\begin{alignat}{2}
	\pi^{-j}_u(y) &\coloneqq \sum_{x \in S_0} w_{u,x} \pi^0_x(y) \qt{for $y \in S_0$},
	\label{eqn:p3}
\\	
w_{u,x} &\coloneqq  \frac{K^{d - j}(u,x)\hat{h}_0(x)}{\sum_{z \in S_0} K^{d - j}(u,z)\hat{h}_0(z)}
\label{eqn:p35}
\end{alignat}
where the weights $w_{u,x}$ are nonnegative and sum to one.
In addition,
\begin{alignat}{2}
\frac
{K^{nd + d - j}(u,S_0)}
{K^{nd + d - j}(v,S_0)}
&\to
\frac
{\sum_{x \in S_0} K^{d - j}(u, x)  \hat{h}_0(x) }
{\sum_{z \in S_0} K^{d - j}(v, z)  \hat{h}_0(z) }
\label{eqn:p4}
\end{alignat}
\end{proposition}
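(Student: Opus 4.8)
The plan is to reduce an arbitrary starting state $u \in S_j$ to the already-understood case of starting states in $S_0$ by routing through the first passage into $S_0$. Since $u \in S_j$, the chain started at $u$ occupies $S_0$ exactly at the times $nd + (d-j)$, so Chapman--Kolmogorov gives, for every $y \in S_0$,
\begin{equation}
K^{nd + d - j}(u,y) = \sum_{x \in S_0} K^{d-j}(u,x)\, K^{nd}(x,y),
\end{equation}
and the hypothesis that $K(u,\cdot)$ has finite support forces $K^{d-j}(u,\cdot)$ to have finite support, so this sum has only finitely many nonzero terms. Taking $y$ to be the whole class $S_0$ gives the companion expression for the survival probabilities $K^{nd + d - j}(u,S_0)$.

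Next I would normalize by the reference quantity $K^{nd}(x_0,S_0)$ and pass to the limit term by term, which is legitimate precisely because the sums are finite. For each fixed $x \in S_0$,
\begin{equation}
\frac{K^{nd}(x,y)}{K^{nd}(x_0,S_0)} = \frac{K^{nd}(x,y)}{K^{nd}(x,S_0)}\cdot\frac{K^{nd}(x,S_0)}{K^{nd}(x_0,S_0)} \longrightarrow \pi^0_x(y)\,\hat{h}_0(x),
\end{equation}
where the first factor converges by the hypothesized periodic Yaglom limit \eqref{eqn:PeriodicYaglomLimit1} from $x\in S_0$ and the second by the Jacka--Roberts condition \eqref{eqn:SurvivalRatioPeriodic}; taking $y=S_0$ in the first factor makes it $1$ and leaves just $\hat{h}_0(x)$. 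Dividing the normalized numerator by the normalized denominator then yields
\begin{equation}
\frac{K^{nd + d - j}(u,y)}{K^{nd + d - j}(u,S_0)} \longrightarrow \frac{\sum_{x\in S_0} K^{d-j}(u,x)\,\hat{h}_0(x)\,\pi^0_x(y)}{\sum_{x\in S_0} K^{d-j}(u,x)\,\hat{h}_0(x)},
\end{equation}
which is exactly the weighted average \eqref{eqn:p3}--\eqref{eqn:p35}, with weights $w_{u,x}$ that are patently nonnegative and sum to one. Reusing only the denominator computation for two starting states $u,v \in S_j$ gives \eqref{eqn:p4} immediately. This establishes \eqref{eqn:p2} in the case $k=0$, the exponent $nd+d-j$ being the sequence $nd-j$ reindexed by one period.

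To obtain \eqref{eqn:p2} for every $k$ and to exhibit the limit as a \rhoi \qsd\ $\pi_u$, I would feed this $k=0$ convergence into the iterative machinery of Proposition~\ref{periodicpro}. Although that proposition is phrased for a reference state in $S_0$, its proof uses only the cyclic one-step passage between consecutive classes together with the asymptotic survival probability \eqref{eqn:periodicL1}; rerunning the identical argument with starting state $u$ and the classes visited in cyclic order $S_j, S_{j+1}, \dots$ propagates the limit from $S_0$ to each $S_k$, produces the normalizing survival factors $\rho_k(u)$, and assembles $\pi_u = \sum_{k} c_k\, \pi^{k}_u$ as a \rhoi \qsd.

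The step I expect to be most delicate is not any single estimate but the index bookkeeping modulo $d$: one must check that $nd + k - j$ really lands the chain started at $u\in S_j$ in class $S_k$, and---since the classes are deliberately \emph{not} relabeled---verify that Proposition~\ref{periodicpro}, stated for a reference state in $S_0$, genuinely transfers to $u\in S_j$ through the cyclic shift. The analytic core is comparatively routine; the single hypothesis doing work beyond bookkeeping is the finiteness of the support of $K(u,\cdot)$, which licenses the term-by-term passage to the limit. As the remark following Proposition~\ref{prop:aperiodicQSD} warns, without finite support one would be left with Fatou inequalities rather than the equalities that the weighted-average formula requires.
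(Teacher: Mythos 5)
Your proposal is correct and follows essentially the same route as the paper's own proof: the Chapman--Kolmogorov decomposition over the finitely many states of $S_0$ reachable in $d-j$ steps, normalization by $K^{nd}(x_0,S_0)$, termwise passage to the limit using \eqref{eqn:PeriodicYaglomLimit1} and \eqref{eqn:SurvivalRatioPeriodic}, the denominator computation yielding \eqref{eqn:p4}, and the handoff to Proposition~\ref{periodicpro} (after relabelling the classes) to extend from $k=0$ to all $k$ and identify $\pi_u$ as a \rhoi\ \qsd. The bookkeeping concern you flag is handled in the paper exactly as you anticipate, by a temporary relabelling of $S_0,\dots,S_{d-1}$.
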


\begin{remark}\label{rem:intuition}
	The terms involving $\hat{h}_0(\cdot)$ account for the relative likelihood of a long remaining lifetime, which might vary over the states in $S_0$.
	If the hypotheses of Prop.~\ref{periodicproIII} hold, then the denominator in \eqref{eqn:p35} simplifies to $\rho^{d - j} \hat{h}(u)$ where $\hat{h}(u)$
	is defined just prior to Prop.~\ref{periodicproIII}.

	The example in \autoref{subsec:remlife}
	shows that $\hat{h}$ may not be $\rho$-harmonic
	if the finite support assumption does not hold.
\end{remark}
\begin{proof}
	It suffices to establish that \eqref{eqn:p2} holds for any $j \in \set{1, \dots, d - 1}$ but only one $k$ since Prop.~\ref{periodicpro} can then be used to show that \eqref{eqn:p2} holds for all $k$ and that $\pi_u$ is a \rhoi \qsd.
We will show that it holds for $k = 0$ by letting $k = d$ in the l.h.s.\ of \eqref{eqn:p2} and showing that it converges to the r.h.s.\ of \eqref{eqn:p3}.
At which point, everything follows from Prop.~\ref{periodicpro} except \eqref{eqn:p4}.

Consider the numerator of the l.h.s.\ of \eqref{eqn:p2} with $k = d$ divided by $K^{nd}(x_0,S_0)$.  That is, for $y \in S_0$,
\begin{alignat*}{2}
\frac{K^{(n + 1)d - j}(u,y)}{K^{nd}(x_0,S_0)}
&=
{\sum_{x \in S_0} K^{d - j}(u, x)
\frac{K^{nd}(x,y)}{K^{nd}(x,S_0)}
\frac{K^{nd}(x,S_0)}{K^{nd}(x_0,S_0)} }
\\
&\to
\sum_{x \in S_0} K^{d - j}(u, x)
\pi^0_x(y)
\hat{h}_0(x)
\end{alignat*}
where we used the finite support of $K^{d - j}(u, \cdot)$ to justify interchanging the limit and summation and then used \eqref{eqn:SurvivalRatioPeriodic} and \eqref{eqn:PeriodicYaglomLimit1}.

Now, consider the denominator of the l.h.s.\ of \eqref{eqn:p2} also with $k = d$ and also divided by $K^{nd}(x_0,S_0)$.  That is,
\begin{alignat*}{2}
\frac{K^{(n + 1)d - j}(u,S_0)}{K^{nd}(x_0,S_0)}
&=
{\sum_{x \in S_0} K^{d - j}(u, x)
\frac{K^{nd}(x,S_0)}{K^{nd}(x_0,S_0)} }
\\
&\to
{\sum_{x \in S_0} K^{d - j}(u, x)
\hat{h}_0(x)}
\end{alignat*}
where we again used the finite support of $K^{d - j}(u, \cdot)$ to justify interchanging the limit and summation and then used \eqref{eqn:SurvivalRatioPeriodic}.

Combining the above, the l.h.s.\ of \eqref{eqn:p2} with $k = d$ has a limit
\begin{alignat}{2}
\pi^{-j}_u(y)
&\coloneqq
\frac
{\sum_{x \in S_0} K^{d - j}(u, x)
\pi^0_x(y) \hat{h}_0(x) }
{\sum_{z \in S_0} K^{d - j}(u, z)  {\hat{h}_0(z) }} \qt{for $y \in S_0$}
\\
&=
\sum_{x \in S_0} w(u,x) \pi^0_x(y) \qt{for $y \in S_0$}
\end{alignat}
where $w(u,x)$ is defined in \eqref{eqn:p35}.
Clearly, $w(u,x) > 0$ and $\sum_{x \in S_0} w(u,x) = 1$.
The limit $\pi^{-j}_u(y)$ is also nonnegative and $\sum_{y \in S_0} \pi^{-j}_u(y) = 1$.
Thus, after a temporary relabelling of $S_0, \dots, S_{d -1}$, we can use Prop.~\ref{periodicpro} to establish \eqref{eqn:p2} for all $k$.

Lastly, to establish \eqref{eqn:p4}, just use the result related to the
denominator of the l.h.s.\ of \eqref{eqn:p2} starting from $u$ and the analogous result starting from $v$.

\end{proof}

Recall that 
in the 
proof of Prop.~\ref{periodicpro}, 
we showed that 
$\prs{x}{\zeta > nd + k + 1 \given \zeta > nd +
k}\to \rho_k(x)$ 
where $x \in S_0$, which gives the asymptotic probability of surviving one
more step given that the process is currently in $S_k$ and started in state $x$.
Part of the hypothesis of the forthcoming Prop.~\ref{periodicproIII} will be that 
each of the functions $\rho_0(\cdot), \dots, \rho_{d - 1}(\cdot)$ is a constant;
that is,
\begin{alignat}{2}
	\rho_k(x) = \rho_k(y) \qt{for all $k$, and all $x,y \in S_0$.}
	\label{eqn:constantrho}
\end{alignat}
We will see an example where \eqref{eqn:constantrho} holds even though $\pi_x
\neq \pi_y$ whenever $x \neq y$.
Remark~\ref{rem:rho} gave one sufficient condition for---something stronger
than---\eqref{eqn:constantrho}
to hold.
A second sufficient
condition for \eqref{eqn:constantrho} is given in the following proposition.

\begin{proposition}
Let $K$ be irreducible, substochastic, and periodic with period $d > 1$.
If \eqref{eqn:PeriodicYaglomLimit1} holds for all $x \in S_0$ and if the
following limits exist 
\begin{equation}\label{eqn:SurvivalRatioPeriodic-again}
	\lim_{n \to \infty} \frac{K^n(x,S)}{K^n(x_0,S)} \quad\text{ for all $x \in
	S_0$},
\end{equation}
then \eqref{eqn:constantrho} holds for all $k \in \set{0, \ldots, d - 1}$.
\end{proposition}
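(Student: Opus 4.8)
The plan is to express each $\rho_k(x)$ as a limit of one-step survival ratios and then telescope through the reference state $x_0$ using the hypothesized limit \eqref{eqn:SurvivalRatioPeriodic-again}. Recall from the proof of Proposition~\ref{periodicpro} that, writing $s_n(x) \equiv \prs{x}{\zeta > n+1 \given \zeta > n} = K^{n+1}(x,S)/K^n(x,S)$, the hypothesis \eqref{eqn:PeriodicYaglomLimit1} (equivalently \eqref{eqn:PeriodicYaglomLimit2}) gives $s_{nd+k}(x) \to \rho_k(x)$ for every $x \in S_0$ and every $k$. Thus $\rho_k(x) = \lim_n K^{nd+k+1}(x,S)/K^{nd+k}(x,S)$, and it suffices to show this limit is independent of $x \in S_0$.

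First I would record that the limit $g(x) \equiv \lim_n K^n(x,S)/K^n(x_0,S)$ furnished by \eqref{eqn:SurvivalRatioPeriodic-again} lies in $(0,\infty)$. Finiteness follows exactly as in the boundedness argument preceding \eqref{eqn:lambda}: choosing $m$ (a multiple of $d$) with $K^m(x_0,x) > 0$ gives $K^m(x_0,x)\,K^n(x,S) \leq K^{n+m}(x_0,S)$, so that $K^n(x,S)/K^n(x_0,S)$ stays bounded because $K^{n+m}(x_0,S)/K^n(x_0,S) \to \rho^m$ by iterating \eqref{eqn:periodicL1} (available for $x_0 \in S_0$ via Lemma~\ref{periodicuseit}). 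Interchanging the roles of $x$ and $x_0$ shows $K^n(x_0,S)/K^n(x,S)$ is likewise bounded, so $g(x) \in (0,\infty)$.

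The heart of the argument is the factorization
\begin{alignat*}{2}
\frac{K^{nd+k+1}(x,S)}{K^{nd+k}(x,S)}
&= \frac{K^{nd+k+1}(x,S)}{K^{nd+k+1}(x_0,S)} \cdot \frac{K^{nd+k+1}(x_0,S)}{K^{nd+k}(x_0,S)} \cdot \frac{K^{nd+k}(x_0,S)}{K^{nd+k}(x,S)}.
\end{alignat*}
Because \eqref{eqn:SurvivalRatioPeriodic-again} asserts convergence of $K^m(x,S)/K^m(x_0,S)$ along the full sequence, both the phase-$(k+1)$ subsequence and the phase-$k$ subsequence converge to the same value $g(x)$; hence the first factor tends to $g(x)$ and the third to $g(x)^{-1}$. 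The middle factor is $s_{nd+k}(x_0) \to \rho_k(x_0)$. Multiplying, $\rho_k(x) = g(x)\,\rho_k(x_0)\,g(x)^{-1} = \rho_k(x_0)$ for every $x \in S_0$ and every $k$, which is precisely \eqref{eqn:constantrho}.

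I expect the crux—and the reason the full-sequence hypothesis \eqref{eqn:SurvivalRatioPeriodic-again} is needed rather than merely its restriction to multiples of $d$ (the periodic Jacka--Roberts condition \eqref{eqn:SurvivalRatioPeriodic})—to be the observation that the first and third factors are evaluated at the two \emph{different} phases $k+1$ and $k$. Convergence over all $m$ is exactly what forces these two cross-phase ratios to share the common value $g(x)$ and cancel; under the weaker periodic condition they could converge to different limits and the cancellation would fail. Once this is in place, together with the positivity of $g(x)$ established above, the remainder is bookkeeping already carried out in Proposition~\ref{periodicpro}.
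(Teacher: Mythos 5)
Your proof is correct and is essentially the paper's argument: both evaluate the same product of one-step survival ratios two ways, using the full-sequence convergence in \eqref{eqn:SurvivalRatioPeriodic-again} to make the cross-state ratios at phases $k$ and $k+1$ cancel. The only cosmetic differences are that you telescope through the reference state $x_0$ where the paper compares two arbitrary states $x,y\in S_0$ directly, and that you explicitly verify the limit $g(x)$ lies in $(0,\infty)$, a point the paper leaves implicit.
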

\begin{proof}
	From the  
	proof of Prop.~\ref{periodicpro}, 
\begin{alignat*}{2}
\frac{K^{nd+k+1}(x,S_{k+1})}{K^{nd + k}(x,S_{k})} 
&\to
\rho_k(x) \qt{ for $x \in S_0$}
\end{alignat*}
where we used \eqref{eqn:PeriodicYaglomLimit1} and the corollary to Scheff\'{e}'s Theorem
in the last step.  
Consequently, for $x,y \in S_0$,
\begin{alignat*}{2}
	 \left(\frac{K^{nd+k+1}(x,S_{k+1})}{K^{nd+k}(x,S_{k})}\right) 
\left(\frac{K^{nd+k}(y,S_{k})} {K^{nd+k+1}(y,S_{k+1})}\right)
	&\to
	\frac{\rho_k(x)}{\rho_k(y)}.
	\intertext{On the other hand, the above limit could also be evaluated using
	\eqref{eqn:SurvivalRatioPeriodic-again}.  
	If the limits in \eqref{eqn:SurvivalRatioPeriodic-again} hold, then the
	limits must be equal to $\hat{h}_0(x)$ where $\hat{h}_0$ was defined in
\eqref{eqn:SurvivalRatioPeriodic}  Consequently,}  
	 \left(
		 \frac{K^{nd+k+1}(x,S_{k+1})}
{K^{nd+k+1}(y,S_{k+1})}
\right)
\left(\frac{K^{nd+k}(y,S_{k})} 
	 {K^{nd+k}(x,S_{k})}\right) 
&\to
\frac{\hat{h}_0(x)}{\hat{h}_0(y)}\cdot\frac{\hat{h}_0(y)}{\hat{h}_0(x)} =1, 
\end{alignat*}
which means that $\rho_k(x) = \rho_k(y)$ for all $k$.  
\end{proof}

If \eqref{eqn:SurvivalRatioPeriodic} holds, define $\hat{h}(u)$ for $u \in
S$ as follows:  if
$u \in S_j$, then 
\begin{equation}\label{eqn:periodic-h-hat}
\hat{h}(u) \coloneqq \rho^{j-d}\sum_{x\in
S_0}K^{d-j}(u,x)\hat{h}_0(x) 
\end{equation}
where $j \in \set{0, \ldots, d - 1}$.  
Similar to the aperiodic case, $\hat{h}$ is some sort of measure of the likelihood of surviving
for a long time relative to a fixed state $x_0$.  
The definition of $\hat{h}$ seems to be the correct way to extend $\hat{h}_0$
from a function on $S_0$ to a function on $S$ so that \eqref{eqn:pp3} holds and
so that $\hat{h}$ has a chance of being $\rho$-harmonic.  
The following proposition
gives conditions for $\hat{h}$ to be $\rho$-harmonic.  Although
there may be many other $\rho$-harmonic functions, $\hat{h}$ plays 
an important role in the large deviation behavior.  

\begin{proposition}\label{periodicproIII}
Let $K$ be irreducible, substochastic, and periodic with period $d > 1$.
If \eqref{eqn:PeriodicYaglomLimit1} holds for all $x \in S_0$,
\eqref{eqn:SurvivalRatioPeriodic} and
\eqref{eqn:constantrho} hold, and the support of $K(u,
\cdot)$ is finite for all $u \in S$, then
$\hat{h}$ is $\rho$-harmonic.  If in addition $u$ and $v$ are in the same class $S_j$, then
\begin{alignat}{2}
\frac{\hat{h}(v)}{\hat{h}(u)} &= \lim_{n \to \infty}
\frac{K^{n}(v,S)}{K^{n}(u,S)}.  
\label{eqn:pp3}
\end{alignat}
\end{proposition}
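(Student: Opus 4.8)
The plan is to reduce everything to the aperiodic theory already in hand, applied to the chain watched on $S_0$ at multiples of $d$, and then to propagate the conclusion around the cycle using the definition \eqref{eqn:periodic-h-hat}. First I would introduce the substochastic matrix $\widetilde{K} \coloneqq K^d$ restricted to $S_0$. Because $K$ has period $d$, every length-$nd$ path between two states of $S_0$ must sit in $S_0$ at each multiple of $d$, so $\widetilde{K}^{\,n}(x,y) = K^{nd}(x,y)$ for $x,y \in S_0$. This $\widetilde{K}$ is irreducible, aperiodic, substochastic, has finite support since $K$ does, and its convergence norm is $\rho^d$ (as $[K^{nd}(x,y)]^{1/(nd)} \to \rho$). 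The hypothesis \eqref{eqn:PeriodicYaglomLimit1} at $k=0$ for every $x \in S_0$ is precisely the Yaglom limit of $\widetilde{K}$, while \eqref{eqn:SurvivalRatioPeriodic} is the Jacka--Roberts condition for $\widetilde{K}$ with survival ratio $\hat{h}_0$. Proposition~\ref{prop:aperiodicQSD} therefore applies directly and (being harmonic, not merely superharmonic, since $\widetilde{K}$ inherits finite support) shows that $\hat{h}_0$ is $\rho^d$-harmonic for $\widetilde{K}$:
\[ \sum_{x \in S_0} K^d(y,x)\,\hat{h}_0(x) = \rho^d\,\hat{h}_0(y) \qquad\text{for all } y \in S_0. \]
This single identity does the real work.

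Granting it, $\rho$-harmonicity of $\hat{h}$ is a bookkeeping computation. Putting $j=0$ in \eqref{eqn:periodic-h-hat} and using the displayed identity gives $\hat{h} = \hat{h}_0$ on $S_0$. For $u \in S_j$ the row $K(u,\cdot)$ is supported on $S_{j+1 \pmod d}$, so I would compute $(K\hat{h})(u) = \sum_v K(u,v)\hat{h}(v)$ by inserting the definition of $\hat{h}$ on the target class, interchanging the finite sums, and collapsing $\sum_v K(u,v)K^{d-j-1}(v,x) = K^{d-j}(u,x)$. For $j \le d-2$ the powers of $\rho$ in \eqref{eqn:periodic-h-hat} telescope at once to yield $(K\hat{h})(u) = \rho\,\hat{h}(u)$; for $j = d-1$ the single step lands in $S_0$, and there I would substitute $\hat{h} = \hat{h}_0$ (this is where the displayed identity is actually consumed) to reach the same equality. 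Since $\hat{h} \ge 0$ and $\hat{h}(x_0) = 1$, irreducibility makes $\hat{h}$ strictly positive, so $\hat{h}$ is $\rho$-harmonic.

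For the ratio identity \eqref{eqn:pp3} with $u,v \in S_j$, the starting point is \eqref{eqn:p4}, whose right-hand side equals $\hat{h}(u)/\hat{h}(v)$ by the definition of $\hat{h}$; thus $K^n(u,S)/K^n(v,S) \to \hat{h}(u)/\hat{h}(v)$ along the single residue $n \equiv d-j \pmod d$ at which the chain occupies $S_0$. To cover the remaining residues I would split at that time: for $n = md + (d-j) + r$ with $0 \le r < d$,
\[ K^{md + (d-j) + r}(u,S) = \sum_{y \in S_0} K^{md + d - j}(u,y)\, K^r(y,S), \]
divide numerator and denominator of $K^n(u,S)/K^n(v,S)$ by $K^{md+d-j}(u,S_0)$ and $K^{md+d-j}(v,S_0)$, and let $m \to \infty$. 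The leading factor tends to $\hat{h}(u)/\hat{h}(v)$ by \eqref{eqn:p4}, while \eqref{eqn:p3} and the corollary to Scheff\'{e}'s Theorem turn the remaining sums into $\sum_y \pi^{-j}_u(y) K^r(y,S)$ and $\sum_y \pi^{-j}_v(y) K^r(y,S)$. Here \eqref{eqn:constantrho} is decisive: writing $\pi^{-j}_u = \sum_x w_{u,x}\pi^0_x$ from \eqref{eqn:p3} and iterating \eqref{eqn:itpiky}--\eqref{eqn:survivalk} gives $\sum_y \pi^0_x(y) K^r(y,S) = \prod_{i=0}^{r-1}\rho_i(x)$, so constancy of each $\rho_i(\cdot)$ on $S_0$ makes this $r$-step survival probability equal to $\prod_{i=0}^{r-1}\rho_i$ regardless of $x$; since the weights $w_{u,x}$ sum to one, the two sums coincide and their quotient tends to $1$. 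Hence $K^n(u,S)/K^n(v,S) \to \hat{h}(u)/\hat{h}(v)$ along every residue, i.e.\ for all $n$, which is \eqref{eqn:pp3} upon taking reciprocals.

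I expect the last paragraph to be the crux. Extending the single-residue limit of \eqref{eqn:p4} to a genuine limit over all $n$ is exactly what forces both the finite-support hypothesis (to license the interchanges behind Proposition~\ref{periodicproII} and Scheff\'{e}) and the constant-survival hypothesis \eqref{eqn:constantrho}; without the latter the residual survival factors $\sum_y \pi^{-j}_u(y)K^r(y,S)$ would retain a dependence on $u$, and the limits along different residues could genuinely disagree. Everything else is manipulation of the definition \eqref{eqn:periodic-h-hat} on top of the previously established propositions.
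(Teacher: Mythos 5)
Your proposal is correct and follows essentially the same route as the paper: the $\rho$-harmonicity of $\hat{h}$ comes from telescoping the definition \eqref{eqn:periodic-h-hat} one step around the cycle, and the ratio limit \eqref{eqn:pp3} comes from decomposing $K^n(u,S)$ through the visit to $S_0$, using the identity $\sum_{y}\pi^0_x(y)K^r(y,S)=\rho_0(x)\cdots\rho_{r-1}(x)$ together with \eqref{eqn:constantrho} to make the residual survival factor independent of the starting point. The one place you go beyond the paper is worth keeping: by applying Proposition~\ref{prop:aperiodicQSD} to the watched chain $K^d$ on $S_0$ you explicitly establish that $\hat{h}_0$ is $\rho^d$-harmonic for $K^d\vert_{S_0}$, equivalently that $\hat{h}=\hat{h}_0$ on $S_0$; the paper's own telescoping silently uses exactly this at the wrap-around step $j=d-1$ (where the formula $\rho^{(j+1)-d}K^{d-(j+1)}\hat{h}_0$ with $j+1=d$ must agree with the $j=0$ case of the definition), so your version closes a small gap rather than introducing a detour. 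The organization of the ratio-limit argument as ``limit from \eqref{eqn:p4} times a cancelling residual factor'' versus the paper's single three-factor decomposition $K^{d-j}K^{md}K^{i}$ is only a cosmetic difference; both need finite support and Scheff\'{e} for the same interchanges.
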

\begin{remark}The example in \autoref{subsec:remlife} shows that $\hat{h}$ may
	not be $\rho$-harmonic without the finite support assumption.
\end{remark}

\begin{proof}
	First, we show that $\hat{h}$ is $\rho$-harmonic.
	Fix $j \in \set{0, \ldots, d - 1}$.
	Let $u\in S_j$ and $v \in S_{j + 1}$ (where by convention $S_d$ means $S_0$).  
\begin{alignat*}{2}
\sum_v K(u,v)\hat{h}(v)
	&=\sum_vK(u,v)\rho^{j+1-d}\sum_{x\in S_0}K^{d-(j+1)}(v,x)\hat{h}_0(x)\\
	&=\rho\rho^{j-d}\sum_{x\in S_0}K^{d-j}(u,x)\hat{h}_0(x) \\
	&=\rho\hat{h}(u).
\end{alignat*}

Next, we prove the ratio limit result.  
Let $u \in S_j$ for fixed $j \in \set{0, \ldots, d - 1}$
From \eqref{eqn:constantrho}, $\rho_k(x)$ does not depend on
$x$, so we shorten it to $\rho_k$.  Fix $i \in \set{1, \ldots, d }$.
\begin{alignat*}{2}
	{K^{d - j + md + i}(u,S)} 
		&=
		\sum_{x,y\in S_0} K^{d-j}(u,x) K^{md}(x,y) K^i(y,S_i) \\
		\frac{K^{d - j + md + i}(u,S)}{K^{md}(x_0,S_0)}
		&= 
		\sum_{x,y\in S_0}
		K^{d-j}(u,x)\frac{K^{md}(x,y)}{K^{md}(x,S_0)}K^i(y,S_i)\frac{K^{md}(x,S_0)}{K^{md}(x_0,S_0)} \\
		&\to
		\sum_{x,y \in S_0} 
		K^{d-j}(u,x) \pi^0_x(y) \rho_0 \ldots \rho_{i - 1} \hat{h}_0(x) 
		\quad\text{as $m \to \infty$}	
		\\
		&= 
		\rho_0 \ldots \rho_{i - 1}
		\sum_{x \in S_0} K^{d - j}(u,x) \hat{h}_0(x) \\
		&= 
		\rho_0 \ldots \rho_{i - 1}
		\rho^{d - j} \hat{h}(u)
		\intertext{
where we justify interchanging the limit and sum in the next paragraph.  In the
above, we used the identity $\sum_{y \in S_0} \pi_x^0(y) K^j(y,S_j) = \rho_0(x)
\dots \rho_{j - 1}(x)$.  Also, since the $\rho_k$'s do not depend on $x$, they
could be factored outside the summation.
		Consequently, if $v$ is also in $S_i$, then}
	\frac{K^{d - j + md + i}(u,S)} 
	{K^{d - j + md + i}(v,S)} 
	&\to 
	\frac{\hat{h}(u)}{\hat{h}(v)} \quad\text{as $m \to \infty$.}
\end{alignat*}
Since this limit does not depend on $i$ and $j$ as long as $u$ and $v$ are in
the same class, \eqref{eqn:pp3} holds.  

Since 
there are only a finite number
of states $x$ accessible from $u$, it suffices to show that we can interchange
the limit and the sum over $y \in S_0$.  
Since 
$K^{md}(x,y)/K^{md}(x_0,S_0)$
converges to 
$\pi^0_x(y) \rho_0 \ldots \rho_{i - 1} \hat{h}_0(x),$ which is summable over $y
\in S_0$, and since $K^{i}(y, S_i)$ is bounded, we can again use the corollary
to Scheffe's theorem to justify 
interchanging the limit and sum.  
\end{proof}

\section{Duality and reversibility}\label{sec:dualwrev}
In some situations, a duality exists between $t$-invariant measures and 
$t$-harmonic functions.  
The example in \autoref{subsec:remlife} is a situation where they cannot be
linked since there is a $\rho$-invariant measure, but no $\rho$-harmonic
function.  We now describe a situation where such a duality arises and is
related to 
a kind of reversibility for
substochastic matrices; there will be 
additional duality discussion without reversibility in \autoref{sec:dualworev}.

First, we need several definitions.  
Given a positive $t$-harmonic function $h$ on $S$, Doob's $h$-transform of $K$,
sometimes 
called the \emph{twisted} kernel, is given by 
\begin{alignat}{2}
	\tilde{K}(x,y) &= \frac{K(x,y)h(y)}{t h(x)}. 
	\label{eqn:tildeKnew}
\intertext{
Similarly, given a positive $t$-invariant measure $\sigma$ on $S$, the time reversal
with respect to $\sigma$ is 
}
\ola{K}(x,y) &= 
\frac{\sigma(y) K(y,x)}{t\sigma(x)}.
\label{eqn:olaKnew}
\end{alignat}
Both $\ola{K}$ and $\tilde{K}$ are stochastic matrices on $S$.
There may be many different $t$-harmonic functions, and many different
$t$-invariant measures.
If  $\tilde{K} = \ola{K}$ and the same eigenvalue $t$ was used in constructing
both, then the $h$ used in constructing $\tilde{K}$ and the 
$\sigma$ used in constructing $\ola{K}$ are duals. 

We call $K$ \emph{reversible} if the Kolmogorov criterion holds; that is,
if
\begin{alignat}{2}
	K(x_0, x_1)K(x_1,x_2) \cdots K(x_{n - 1},x_n) &= K(x_n,
x_{n-1}) K(x_{n - 1}, x_{n - 2}) \cdots K(x_1,x_0) \label{st2}
\end{alignat} 
for any sequence of states 
$x_0, x_1, \ldots, x_n$ 
with $x_0 = x_n$. 
The following results are part of Theorems~4.1 in Pollett \cite{Pollett-1988}
and in Pollett\cite{Pollett-1989}. 
\begin{proposition}[Pollett\cite{Pollett-1989}]\label{prop:duality}
	Let $K$ be irreducible and substochastic on $S$.
$K$ is reversible iff 	
	there exists a positive measure $\gamma$ on $S$
			such that
			\begin{alignat}{2}
				\gamma(x)K(x,y) &= \gamma(y) K(y,x) \qt{ for all $x,y \in
				S$.}  \label{newst3}
			\end{alignat}
			If $\tilde{K} = \ola{K}$ where both were computed using the same
	eigenvalue $t$,	then $K$ is reversible. 
		If $K$ is reversible, $\sigma$ is $t$-invariant, $h$ is $t$-harmonic, and
		$\sigma(x) = h(x)/\gamma(x)$, then $\tilde{K} = \ola{K}$.
	If $K$ is reversible and $\sigma$ is a $t$-invariant measure, 
		 then $h(x) \coloneqq \sigma(x)/\gamma(x)$ for $x \in S$ defines 
			the dual 
			$t$-harmonic function.  
		Similarly, if $K$ is reversible and $h$ is a $t$-harmonic function, then $\sigma(x)
			\coloneqq h(x)\gamma(x)$ for $x \in S$ defines 
			the dual 
			$t$-invariant measure.  
\end{proposition}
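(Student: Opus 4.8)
The plan is to rest everything on the classical equivalence between reversibility and the existence of the reversing measure $\gamma$ in \eqref{newst3}, and then to read off each of the remaining assertions as a short manipulation of \eqref{newst3} inside a sum. The biconditional ``$K$ reversible $\iff$ some positive $\gamma$ satisfies \eqref{newst3}'' is the Kolmogorov-criterion characterization, so I would either cite Pollett \cite{Pollett-1989} or reproduce the standard argument. For the ``if'' direction, rewrite \eqref{newst3} as $K(x_{i-1},x_i) = \bigl(\gamma(x_i)/\gamma(x_{i-1})\bigr)K(x_i,x_{i-1})$ and multiply over a cycle $x_0, x_1, \ldots, x_n = x_0$; the ratios $\gamma(x_i)/\gamma(x_{i-1})$ telescope to $\gamma(x_n)/\gamma(x_0) = 1$, leaving exactly \eqref{st2}. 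For ``only if,'' I would first apply \eqref{st2} to a cycle built from a positive-weight edge $x \to y$ and a positive-weight return path from $y$ to $x$ (which exists by irreducibility) to get $K(x,y) > 0 \Rightarrow K(y,x) > 0$, so the support is symmetric; then fix $x_0$ and define $\gamma(x) \coloneqq \prod_i K(y_{i-1},y_i)/K(y_i,y_{i-1})$ along any positive-weight path $x_0 = y_0, \ldots, y_m = x$. The main obstacle is the well-definedness of this $\gamma$: path-independence is exactly \eqref{st2} applied to the cycle formed by one path and the reverse of another, and this is the only step in the whole proposition that is not immediate algebra.

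With $\gamma$ in hand, the two statements producing a dual object are one-line computations. Given $K$ reversible and $\sigma$ $t$-invariant, set $h \coloneqq \sigma/\gamma$ and compute
\begin{align*}
(Kh)(x) &= \sum_y K(x,y)\frac{\sigma(y)}{\gamma(y)}
= \frac{1}{\gamma(x)}\sum_y \gamma(x)K(x,y)\frac{\sigma(y)}{\gamma(y)} \\
&= \frac{1}{\gamma(x)}\sum_y \sigma(y)K(y,x)
= \frac{t\,\sigma(x)}{\gamma(x)} = t\,h(x),
\end{align*}
using \eqref{newst3} in the third expression and $\sigma K = t\sigma$ in the fourth, so $h$ is $t$-harmonic. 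The mirror computation, starting from a $t$-harmonic $h$ and setting $\sigma \coloneqq h\gamma$, rewrites $\sum_y \sigma(y)K(y,x)$ as $\gamma(x)\sum_y K(x,y)h(y) = t\,\gamma(x)h(x) = t\,\sigma(x)$, so $\sigma$ is the dual $t$-invariant measure.

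For the two statements relating $\tilde K$ and $\ola K$, I would set $\tilde K(x,y) = \ola K(x,y)$ from \eqref{eqn:tildeKnew} and \eqref{eqn:olaKnew}, cancel the common $t$, and clear denominators to obtain $\sigma(x)h(y)K(x,y) = \sigma(y)h(x)K(y,x)$; dividing by $h(x)h(y)$ shows that $\gamma \coloneqq \sigma/h$ satisfies \eqref{newst3}, so $\tilde K = \ola K$ forces $K$ to be reversible. Conversely, with $K$ reversible and $\sigma$ paired to $h$ by $\sigma = \gamma h$, substituting $\sigma(y)K(y,x) = \gamma(y)h(y)K(y,x) = \gamma(x)h(y)K(x,y)$ via \eqref{newst3} turns \eqref{eqn:olaKnew} into $\gamma(x)h(y)K(x,y)/\bigl(t\,\gamma(x)h(x)\bigr) = \tilde K(x,y)$. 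The one point to watch is the direction of the pairing: the algebra shows $\tilde K = \ola K$ holds precisely when $\sigma(x) = \gamma(x)h(x)$, equivalently $h = \sigma/\gamma$, in agreement with the dual formulas above, so one must be careful not to invert $\gamma$ when invoking \eqref{newst3}.
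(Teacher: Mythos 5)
Your proof is correct and follows essentially the same route as the paper's: the telescoping of \eqref{newst3} around a cycle for one direction of the Kolmogorov equivalence, a path-defined $\gamma$ whose well-definedness is exactly \eqref{st2} applied to a suitable cycle for the other direction, and the same short algebraic manipulations for the dual constructions and for the equivalence of $\tilde{K}=\ola{K}$ with reversibility. You were also right to flag the direction of the pairing: the algebra (and the paper's own proof, which extracts $\gamma=\sigma/h$) gives $\sigma=\gamma h$, so the hypothesis ``$\sigma(x)=h(x)/\gamma(x)$'' in the statement should read $\sigma(x)=h(x)\gamma(x)$.
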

\begin{remark}
	In the stochastic case, a constant function $h$ is $1$-harmonic so
	$\tilde{K} = \ola{K}$
	becomes the familiar $K$ = $\ola{K}$, though without the assumption of
	stationarity. 
	For example, consider a simple random walk on the integers that moves
	right with probability $0 < p < 1/2$. 
	Birth-death processes are reversible since they satisfy Kolmogorov's
	criterion, and for this process with $h$ being a column vector of ones, we
	have $\gamma(x) = \sigma(x) = (p/(1 - p))^x$.  For this pair,
	$\tilde{K}=\ola{K}$, and the associated Markov chains
	drift to negative infinity.
	There is another $1$-harmonic function: $((1 - p)/p)^x$. The corresponding
	$1$-invariant measure must be obtained by multiplying by $\gamma(x)$
	giving $(\ldots, 1,1,1, \ldots)$.  Both the time reversal and the
	twisted process for this pair drift to plus infinity.
	Notice that a
	single reversibility 
	measure $\gamma$ works for all eigenvalues $t$.
\end{remark}
%
\begin{proof}
	The following parts of the argument related to Kolmogorov's criterion is quite
	similar to the argument in the proof of Theorem~1.7 in \cite{Kelly}
	though without stationarity and for a substochastic matrix.  
	For any path $x = (x_0, x_1, \ldots, x_n)$, 
	let $\kappa(x) \coloneqq K(x_0, x_1)K(x_1, x_2) \cdots K(x_{n - 1},
	x_n)$.  
	In addition, let
	$\ola{x} = (x_n, x_{n - 1}, \ldots, x_0)$.  
	If \eqref{st2} holds and  
	$x_0 = x_n$, then $\kappa(x)
	= \kappa(\ola{x})$.  

	Let $y$ be some other path ending at $y_m$.  
	If
	\eqref{st2} holds, $x_0 = y_0$, $x_n = y_m$, and both paths have a
	positive probability of occurring, then 
	\begin{alignat}{2}
		\kappa(x)/\kappa(\ola{x}) &= \kappa(y)/\kappa(\ola{y}).
			\label{eqn:kapparatio}
	\end{alignat}	
	To see this, let $z$ be a path from $z_0 = x_n = y_m$ and ending at $z_\ell =
	x_0=y_0$ with $\kappa(z) > 0$.  Then $\kappa(xz) = \kappa(x)\kappa(z)$
	where $xz$ denotes the path that initially follows $x$ to $x_n$ and then follows
	$z$ back to $x_0$.  
	Similarly, $\kappa(yz) = \kappa(y)\kappa(z)$.  
	Under \eqref{st2}, $\kappa(xz) =
	\kappa(\ola{xz})=\kappa(\ola{z})\kappa(\ola{x})$; hence,
	$\kappa(x)/\kappa(\ola{x}) = \kappa(\ola{z})/\kappa(z)$.  
		Similarly, 
	$\kappa(y)/\kappa(\ola{y}) = \kappa(\ola{z})/\kappa(z)$, which gives
	\eqref{eqn:kapparatio}.

	Fix some state $0$ and $\gamma(0) > 0$.  Let $x$ be a path from $x_0=0$
	to some state $x_n$ with $\kappa(x) > 0$.  If \eqref{st2} holds, then 
	define $\gamma(x_n) \coloneqq \gamma(0)
	\kappa(x)/\kappa(\ola{x})$.  Under \eqref{st2}, it follows from
	\eqref{eqn:kapparatio} 
	that the definition of
	$\gamma(x_n)$ makes sense since the r.h.s.\  
	is the same for all such paths. 
	Consider a path that is one step longer:
	$x x_{n+1}$.  For this path, 
	\begin{alignat*}{2}
		\gamma(x_{n + 1}) &= \gamma(0) \frac{\kappa(x x_{n + 1})}
		{\kappa(\ola{x x_{n + 1}})} \\
		&=
		\gamma(0) \frac{\kappa(x)K(x_n,x_{n + 1})}
		{K(x_{n + 1},x_n)\kappa(\ola{x})} \\
		&= 
		\gamma(x_n) \frac{K(x_n,x_{n + 1})}
		{K(x_{n + 1},x_n)},  
	\end{alignat*}
	which means that \eqref{newst3} holds.  Thus, \eqref{st2} implies
	\eqref{newst3}.

	To see that \eqref{newst3} implies \eqref{st2}, suppose that $x$ is a path 
	$x = (x_0, x_1, \ldots, x_n)$.  Since $K(x_j, x_{j + 1}) = \gamma(x_{j
	+ 1}) K(x_{j + 1}, x_j)/\gamma(x_j)$, it follows that $\kappa(x) =
	\gamma(x_n)\kappa(\ola{x})/\gamma(x_0)$.  Letting $x_0 = x_n$ shows
	that Kolmogorov's criterion holds.  

	To see that $\tilde{K} = \ola{K}$ implies reversibility, where both
	were computed using the same eigenvalue $t$,   
	\begin{alignat*}{2}
		\tilde{K}(x,y) &= \ola{K}(y,x) \\
		\frac{K(x,y)h(y)}{th(x)}
		&=	
		\frac{\sigma(y)K(y,x)}{t\sigma(x)} \\
		\frac{\sigma(x) K(x,y)}{h(x)}
		&=	
		\frac{\sigma(y)K(y,x)}{h(y)}, 
	\end{alignat*}
	which means that \eqref{newst3} holds with $\gamma(x) = \sigma(x)/h(x)$.  
	
	The next claim that $\tilde{K} = \ola{K}$ under those conditions 
	follows from a straightforward algebraic simplification.  
	
	If $K$ is reversible, then we know that \eqref{newst3} holds with  
	$\gamma(x)$.  Since $\sigma$ is $t$-invariant, 
	\begin{alignat*}{2}	
	\ola{K}(x,y) 
	&=
	\frac
	{\sigma(y) K(y,x)}
	{t \sigma(x)} \\
	&= 
	\frac
	{ K(x,y) \sigma(y) / \gamma(y)}
	{t(\sigma(x)/\gamma(x))}.
	\end{alignat*}
	Since the sum over the l.h.s.\ is 1, $h=\sigma/\gamma$ is $t$-invariant, and
	$\tilde{K} = \ola{K}$.

	If $K$ is reversible, then we know that \eqref{newst3} holds with  
	$\gamma(x)$.  Since $h$ is $t$-harmonic, 
	\begin{alignat*}{2}	
	\tilde{K}(x,y) 
	&=
	\frac
	{K(x,y)h(y)}
	{th(x)} \\
	&= 
	\frac
	{\gamma(y)h(y)K(y,x)}
	{t\gamma(x)h(x)}.
	\end{alignat*}
	Since the sum over the l.h.s.\ is 1, $\sigma = \gamma h$ is $t$-invariant, and
	$\tilde{K} = \ola{K}$.
\end{proof}

\section{One idea that yields a handful of identities}\label{sec:identities}
We will exploit the following simple idea in computing various
quantities of interest:  frequently, it can be easier to  analyze a
well-chosen twist or time reversal rather than directly analyzing the process
of interest. For example, suppose we are interested in
computing the probability of ever hitting state $y$ starting from state $x$.  
If the well-chosen twist or time reversal eventually hits $y$ for certain when
starting from $x$---that is, if $\tilde{F}_{\xi^*}(x,y) = 1$ or
$\ola{F}_{\xi^*}(x,y) = 1$ in 
\eqref{eqn:hittingtwrev} below---then we have the hitting probabilities for 
other twisted processes and time reversals.  

Initially, assume that the process of interest is a Markov chain starting in 
state $x$ that
has transition matrix either 
$\ola{K}_\xi$ or $\tilde{K}_\xi$, where 
$\ola{K}_\xi$ would be the time reversal of $K$ with respect a $t$-invariant measure
$\sigma_\xi$ and $\tilde{K}_\xi$ would be the time reversal with respect to
$t$-harmonic function $h$.  Assume that the quantities of interest are the 
probably of ever hitting $y$ starting from $x$, which will be denoted by
$\ola{F}_\xi(x,y)$ and 
$\tilde{F}_\xi(x,y)$, respectively.  
Let either $\sigma_\xi^*$ be a (well-chosen) $t^*$-invariant measure, or let
$h_\xi^*$ be a (well-chosen) $t^*$-harmonic function. For simplicity, assume
$t^* = t$.  
\begin{alignat}{2}
	\ola{K}_{\xi}(x,y) &=
	\frac{\sigma_\xi(y)}{\sigma_{\xi^*(y)}}
	\frac{\sigma_{\xi^*(x)}}{\sigma_\xi(x)}
	\ola{K}_{\xi^*}(x,y)
	&\text{ and } 
	\tilde{K}_{\xi}(x,y) &=
	\tilde{K}_{\xi^*}(x,y)
	\frac{h_{\xi^*(x)}}{h_\xi(x)}
	\frac{h_\xi(y)}{h_{\xi^*(y)}} \\
	\ola{F}_{\xi}(x,y) &=
	\frac{\sigma_\xi(y)}{\sigma_{\xi^*(y)}}
	\frac{\sigma_{\xi^*(x)}}{\sigma_\xi(x)}
	\ola{F}_{\xi^*}(x,y)
	&\text{ and } 
	\tilde{F}_{\xi}(x,y) &=
	\tilde{F}_{\xi^*}(x,y)
	\frac{h_{\xi^*(x)}}{h_\xi(x)}
	\frac{h_\xi(y)}{h_{\xi^*(y)}}. \label{eqn:hittingtwrev}
\end{alignat}
The idea is simply to undo the twist or reverse and redo the twist or reverse
with a better measure or harmonic function.  In some contexts, it might be advantageous
to mix the two: undoing the twist and then applying a time reversal or
vice-versa.  

The same idea can be exploited to obtain useful expressions for a different
quantity of interest:  
the generating function $G_{(x,y)}(z)$ defined in \eqref{eqn:GenFun}.  
Suppose that $t = 1/z$ and that $\sigma_\xi^*$ is a (well-chosen) $t$-invariant measure or
$h_\xi^*$ is a (well-chosen) $t$-harmonic function for $K$.  Then 
\begin{alignat}{2}
	G_{(x,y)}(z) 
	&= \frac{\sigma_{\xi^*}(y)}{\sigma_{\xi^*}(x)} \ola{G}_{\xi^*}(y,x)
	\label{eqn:revG} \\
	&= \tilde{G}_{\xi^*}(x,y) \frac{h_{\xi^*}(y)}{h_{\xi^*}(x)} \label{eqn:twG}
\end{alignat}
where $\ola{G}_{\xi^*}(y,x)$ is the expected number of visits to $x$ starting from $y$
for the reversed process with transition matrix $\ola{K}_{\xi^*}$ and
$\tilde{G}_{\xi^*}(x,y)$
is the expected number of visits to $y$ starting from $x$ for the twisted
process with transition matrix $\tilde{K}_{\xi^*}$.
The proofs of the above equations are straightforward algebraic
manipulations.

\section{Examples}\label{sec:examples}
The examples are based on the Seneta and Vere-Jones'~\cite{Vere-Jones-Seneta}
semi-infinite random walk with absorption, which is the same as the
gambler's ruin problem in the Introduction.
Our primary example is a ``hub-and-two-spoke example'' that is depicted in Fig.~\ref{fig:OurExample}.
A hub-and-one-spoke model, shown in Fig.~\ref{fig:graphSVrelabelled}, functions
as a \emph{notational} bridge between our primary example and the
Seneta--Vere-Jones example.
The hub-and-one-spoke example is simply a relabeled version of the
Seneta--Vere-Jones example.
The notation for these 3
examples will be the following:
\begin{description}
\item  [Hub-and-two-spoke:]
	Let $X= \set{X_0, X_1, \dots}$ be a Markov chain with state
	space $\ZZ \coloneqq \set{\dots, -1, 0, 1, \dots}$ augmented
	with an absorbing state $\delta$ and transition matrix $K$
	between states in $\ZZ$ as shown in Figure~\ref{fig:OurExample}.
\item [Hub-and-one-spoke:] Let $Y =
	\set{Y_0, Y_1, \dots}$ denote the Markov chain with state space
	$\NN_0$ augmented by an absorbing state
	$\delta$, and $Q$ will denote the transition matrix between states in $\NN_0$ as shown in Figure~\ref{fig:graphSVrelabelled}.
    \item[Seneta--Vere-Jones:] Let $Z = \set{Z_0, Z_1, \dots}$ be the
	    Markov chain with state space $\NN$
	    augmented by an additional absorbing state 0.
	    The strictly substochastic matrix $P$ gives the transition
	    probabilities between states in $\NN$ where
    \begin{align}
P &=
\begin{bmatrix}
0 & b & 0 & 0 & 0 & \cdots \\
a & 0 & b & 0  & 0 & \cdots \\
0 & a & 0 & b & 0 & \cdots \\
\vdots
\end{bmatrix} \label{eqn:P}
\end{align}
\end{description}
Throughout, we assume that $0 < b < 1/2 < a < 1$ and $a + b = 1$.

\begin{figure}
\begin{tikzpicture}[->,>=stealth',shorten >=1pt,auto,node distance=2cm,
  thick,main node/.style={circle,fill=blue!20,draw,font=\sffamily\Large\bfseries}]

  \node[main node] (0) {0};
  \node[main node] (d) [below of= 0] {$\delta$};
  \node[main node] (1) [right of=0] {1};
  \node[main node] (-1) [left of=0] {-1};
  \node[main node] (2) [right of=1] {2};
  \node[main node] (-2) [left of=-1] {-2};
  \node[right of=2](dp){$\dots$};
  \node[left of=-2](dn){$\dots$};

  \path[every node/.style={font=\sffamily\small}]
    (0) edge [bend left] node[above] {$b/2$} (1)
        edge [bend right] node[above] {$b/2$} (-1)
        edge node [left] {$a$} (d)

    (1) edge [bend left] node[above] {$b$} (2)

        edge [bend left] node[below] {$a$} (0)
    (-1) edge [bend right] node[above] {$b$} (-2)
        edge [bend right] node[below] {$a$} (0)
    (2) edge [bend left] node[above] {$b$} (dp)
        edge [bend left] node[below] {$a$} (1)
    (-2) edge [bend right] node[above] {$b$} (dn)
        edge [bend right] node[below] {$a$} (-1)
    (dp) edge [bend left] node[below] {$a$} (2)
    (dn) edge [bend right] node[below] {$a$} (-2)
    (d) edge [loop below] (d)

;
\end{tikzpicture}
\caption{$K$  is restricted to $\ZZ$.}\label{fig:OurExample}
\end{figure}

\begin{figure}
\begin{tikzpicture}[->,>=stealth',shorten >=1pt,auto,node distance=2cm,
  thick,main node/.style={circle,fill=blue!20,draw,font=\sffamily\Large\bfseries}]

  \node[main node] (0) {0};
  \node[main node] (d) [below of= 0] {$\delta$};
  \node[main node] (1) [right of=0] {1};
  \node (-1) [left of=0] {};
  \node[main node] (2) [right of=1] {2};
  \node (-2) [left of=-1] {};
  \node[right of=2](dp){$\dots$};
  \node[left of=-2](dn){};

  \path[every node/.style={font=\sffamily\small}]
    (0) edge [bend left] node[above] {$b$} (1)
        edge node [left] {$a$} (d)
    (1) edge [bend left] node[above] {$b$} (2)
        edge [bend left] node[below] {$a$} (0)
    (2) edge [bend left] node[above] {$b$} (dp)
        edge [bend left] node[below] {$a$} (1)
    (dp) edge [bend left] node[below] {$a$} (2)
     (d) edge [loop below] (d)
;
\end{tikzpicture}
\caption{$Q$  is restricted to $\NN_0$.}\label{fig:graphSVrelabelled}
\end{figure}

These three examples can be coupled in the following natural way.  Given the
hub-and-two-spoke model $X$, let
$Y_n = \abs*{X_n}
$
and
$Z_n = (Y_n + 1)
$ for all $n$ prior to the (common) time of absorption $\zeta$.
The coupling makes it easier to take advantage of results
in Seneta--Vere-Jones~\cite{Vere-Jones-Seneta}.  For example, $K^n(0,0) = Q^n(0,0) = P^n(1,1)$ so all
three matrices have the same convergence parameter $R$, and from
Seneta--Vere-Jones~\cite{Vere-Jones-Seneta}
$R = 1/\rho$ where $\rho = 2 \sqrt{ab}$.
Next, we review  a few results from
Seneta--Vere-Jones~\cite{Vere-Jones-Seneta}.

\subsection{Seneta--Vere-Jones semi-infinite random walk with
absorption}\label{subsec:SVJRW}
The matrix
$P$ given in \eqref{eqn:P} for the Seneta--Vere-Jones example~\cite{Vere-Jones-Seneta} is irreducible, strictly substochastic, and periodic with period 2.
Let $f_n$ be the probability that the first return to state 1 starting from 1 occurs at time $n$.  Then the generating function $F(z) = \sum_{n \geq 0} f_n z^n =
\left(1-\sqrt{1-4abz^2}\right)/2$.   Hence, the convergence parameter of $P$ is
$R = 1/\rho$ where $\rho = 2\sqrt{ab}$. Since $F(R)=1/2$, $G_{1,1}(R) = 1/(1 -
F(R)) = 2 < \infty$, $P$ must be $R$-transient.
%

Seneta and Vere-Jones~\cite{Vere-Jones-Seneta} prove a periodic Yaglom limit where the \rhoi \qsd\ on the r.h.s.\ is $\pi^*$ given in \eqref{eqn:SVLimit}, which does not depend on the starting state $x$.  From (35)~in~\cite{Vere-Jones-Seneta} and from p.~430 of~\cite{Vere-Jones-Seneta}, we have the following asymptotic expressions as $n \to \infty$
\begin{alignat}{2}
P^{2n}(x,y)
 &\sim x \left(\sqrt{\frac{a}{b}}\right)^{x-1} y \left(\sqrt{\frac{b}{a}}\right)^{y-1}
 \sqrt{\frac{1}{\pi }} \frac{(4ab)^n}{ n^{3/2}}
 \qt{for $y - x$ even} \label{eqn:SVAP}
 \\
\pr{\zeta = n \given Z_0 = x}
&= \frac{x}{n} \binom{n}{(n - x)/2} b^{(n - x)/2} a^{(n + x)/2} \qt{for $n - x$ even}
\label{eqn:asymptau}
\\
&\sim \frac{x\cdot 2^{n+1}}{(2\pi)^{1/2}(n)^{3/2}} b^{\frac{1}{2}(n-x)}a^{\frac{1}{2}(n+x)}
\qt{for $n - x$ even}  \label{eqn:SVAZ}
\end{alignat}
where $\zeta$ is the time of absorption.

\subsection{Hub-and-two-spoke model: our primary example}\label{subsec:ModEx}

At first, this two-spoke variation may seem pointless, but the point is to
construct a tractable model that has more
than one way to escape from 0.  Each spoke provides a different escape route.
%
We will show that the periodic Yaglom limit starting from state $x$ in the hub-and-two-spoke example is
\begin{equation}\label{eqn:periodicYaglomExample}
\pi_x(y) =
\begin{cases}
\frac{1 - \rho}{2a} \left(1 + \abs*{y} + \frac{x}{1 + \abs*{x}  } y \right)
\left( \sqrt{\frac{b}{a}} \right)^{\abs*{y}} &\text{for $y \in \ZZ\setminus\set{0}$}
\\
\frac{1 - \rho}{a}  &\text{for $y = 0$.}
\end{cases}
\end{equation}
For $y > 0$, $\pi_x(y)$ is strictly increasing in $x$---each starting state $x$
has a different Yaglom limit.  For aperiodic examples, it suffices to look at
either the even states or the odd states and use the two-step transition matrix
$K^2$.  For $K^2$, the limiting conditional
distribution of being in state $2y$ for $y > 0$ is strictly increasing in the starting
state $2x$.

\begin{theorem}\label{mainone}
The hub-and-two-spoke model with $0 < b < 1/2 < a < 1$ and $a + b = 1$ is
periodic with period $d = 2$ and has a
periodic Yaglom limit $\pi_x$ given in \eqref{eqn:periodicYaglomExample}.
Equivalently, 
\begin{alignat}{2}
\frac{K^{2n}(x,y)}{K^{2n}(x,S)}
&\to
\frac{\pi_x(y)}{\pi_x(2 \ZZ)} \qt{for $x$ even and $y$ even,} \label{eqn:ee}
\\
\frac{K^{2n + 1}(x,y)}{K^{2n + 1}(x,S)}
&\to \frac{\pi_x(y)}{\pi_x(2 \ZZ + 1)} \qt{for $x$ even and $y$ odd,} \label{eqn:eo}
\\
\frac{K^{2n}(x,y)}{K^{2n}(x,S)}
&\to \frac{\pi_x(y)}{\pi_x(2 \ZZ + 1)} \qt{for $x$ odd and $y$ odd,} \label{eqn:oo}
\\
\frac{K^{2n + 1}(x,y)}{K^{2n + 1}(x,S)} &\to \frac{\pi_x(y)}{\pi_x(2 \ZZ)} \qt{for $x$ odd and $y$ even} \label{eqn:oe}
\intertext{where}
\pi_x(2\ZZ) &= \frac{1}{1 + \rho} \label{eqn:even} \\
\pi_x(2 \ZZ + 1) &= \frac{\rho}{1 + \rho}. \label{eqn:odd}
\end{alignat}
\end{theorem}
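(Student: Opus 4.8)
The plan is to push every computation back to the Seneta--Vere-Jones walk $P$ through the coupling $Y_n = |X_n|$, $Z_n = Y_n + 1$, and then let Proposition~\ref{periodicpro} do the bookkeeping. First I record the structural facts: the chain has period $d = 2$ with $S_0 = 2\ZZ$ and $S_1 = 2\ZZ + 1$, and the only possible exit state is $0$, so $\Delta = \set{0}$. For even $x$ we have $\Delta \subset S_0$, so Remark~\ref{rem:rho} gives $\rho_0(x) = \rho^2$ and $\rho_1(x) = 1$, whence the constants of Proposition~\ref{periodicpro} are $c_0 = 1/(1+\rho)$ and $c_1 = \rho/(1+\rho)$; for odd $x$ the roles swap, but in either case $\pi_x(2\ZZ) = 1/(1+\rho)$. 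Thus \eqref{eqn:even}--\eqref{eqn:odd} will follow automatically once a periodic Yaglom limit is shown to exist, and the real content of the theorem is the shape \eqref{eqn:periodicYaglomExample} together with the verification of \eqref{eqn:PeriodicYaglomLimit2} for a single residue $k$.

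The crucial step is an exact decomposition of $K^n$ in terms of $P^n$. Since $|X_n|$ is itself Markov with transition matrix $Q$ and $Q^n(x,y) = P^n(x+1,y+1)$, summing the two signs gives $K^n(x,y) + K^n(x,-y) = P^n(x+1,y+1)$ for $x,y > 0$. For the difference I decompose each path according to its \emph{last} visit to $0$: the initial segment up to that visit is identical for the targets $+y$ and $-y$, while the terminal $0$-avoiding segments reaching $+y$ and reaching $-y$ are in mirror-symmetric bijection and hence equally likely, so paths that touch $0$ contribute equally to $+y$ and $-y$. What remains in the difference is exactly the contribution of paths that never touch $0$, which on the positive spoke coincides with $P^n(x,y)$. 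With the convention $P^n(0,\cdot) \equiv 0$ this yields
\begin{align}
K^n(x,y)  &= \tfrac12\left[P^n(x+1,y+1) + P^n(x,y)\right], \\
K^n(x,-y) &= \tfrac12\left[P^n(x+1,y+1) - P^n(x,y)\right]
\end{align}
for all $x \ge 0$ and $y > 0$; the cases $x < 0$ follow from the reflection symmetry $K^n(-x,-y) = K^n(x,y)$, and $K^n(x,0) = P^n(|x|+1,1)$.

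Given these identities the asymptotics are immediate from \eqref{eqn:SVAP}. Fixing $x$ and taking $n = 2m$ with $y$ of the same parity, both $P^{2m}(x+1,y+1)$ and $P^{2m}(x,y)$ obey \eqref{eqn:SVAP} at the common order $\rho^{2m} m^{-3/2}$, producing the spatial factor $(|x|+1)(|y|+1) + |x||y|$ for targets on the same spoke as $x$ and $(|x|+1)(|y|+1) - |x||y| = |x|+|y|+1$ for the opposite spoke. The decisive observation is that $(|x|+1)(|y|+1) + |x||y| = 2|x||y| + |x| + |y| + 1$ does \emph{not} factor into a function of $|x|$ times a function of $|y|$; this is precisely what forces the Yaglom limit to depend on $x$. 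Dividing by $K^{2m}(x,y_\ast)$ for a fixed reference state $y_\ast$ of the same parity as $x$ and letting $m \to \infty$ cancels the $x$-dependent constant $(\sqrt{a/b})^{|x|-1}$ and leaves a $y$-shape proportional to $(\sqrt{b/a})^{|y|}$ times the asymptotic spatial factor; a one-line check shows the ratio of that factor to $\bigl(1 + |y| + \tfrac{x}{1+|x|}\,y\bigr)$ equals the $y$-independent constant $|x|+1$, so the shape agrees with \eqref{eqn:periodicYaglomExample}.

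It remains to upgrade pointwise ratio convergence to convergence of the conditional law \eqref{eqn:PeriodicYaglomLimit2} and to fix the constant. For the former I would either dominate $K^{2m}(x,y)/K^{2m}(x,y_\ast)$ by a summable bound---the geometric tail $(\sqrt{b/a})^{|y|}$ times a polynomial in $|y|$, uniform in $m$ by the local-limit estimates underlying \eqref{eqn:SVAP}---or equivalently invoke the survival asymptotics obtained by summing \eqref{eqn:SVAZ}, which give $K^{2m}(x,S) = P^{2m}(|x|+1,\NN)$ of the same order $\rho^{2m} m^{-3/2}$; either way the normalized ratios converge and \eqref{eqn:PeriodicYaglomLimit2} holds for $k = 0$. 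Proposition~\ref{periodicpro} then extends this to a periodic Yaglom limit for every $k$, shows $\pi_x$ is a \rhoi \qsd, and via \eqref{eqn:prodrho} and Remark~\ref{rem:rho} delivers \eqref{eqn:even}--\eqref{eqn:odd}. Finally the constant $\tfrac{1-\rho}{2a}$ is forced by $\pi_x(2\ZZ) = 1/(1+\rho)$: summing the even masses the antisymmetric part cancels, giving $\pi_x(2\ZZ) = \tfrac{1-\rho}{a}\bigl[1 + \sum_{j\ge1}(1+2j)(b/a)^j\bigr]$, which reduces to $1/(1+\rho)$ using $a+b = 1$ and $\rho = 2\sqrt{ab}$. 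I expect the main obstacle to be the clean justification of the difference identity---the last-visit/mirror-symmetry argument---and the uniform integrability needed to pass from the ratio asymptotics to convergence in total variation; the algebra, including the all-important non-factoring spatial form, is routine once the two identities are in hand.
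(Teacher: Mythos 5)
Your proposal is correct and, for its core, coincides with the paper's proof: the same coupling, the same decomposition $K^n(x,\pm y)=\tfrac12\left[Q^n(x,y)\pm\zK^n(x,y)\right]$ (your last-visit-to-zero argument and the paper's reflection of the segment up to the visit to zero are two phrasings of the same measure-preserving bijection), the same use of \eqref{eqn:SVAP} for the numerators and of the summed \eqref{eqn:SVAZ} for $K^{2n}(x,S)$ as in \eqref{eqn:alive}, and the same appeal to Prop.~\ref{periodicpro} to pass from one residue $k$ to all $k$ and to pin down $\pi_x(2\ZZ)=1/(1+\rho)$, $\pi_x(2\ZZ+1)=\rho/(1+\rho)$ via Remark~\ref{rem:rho}. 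Where you genuinely diverge is in the odd starting states: you note that the identities and \eqref{eqn:SVAP} apply verbatim to same-parity pairs, so odd-to-odd is just a second run of the even-to-even computation with the classes relabelled, after which Prop.~\ref{periodicpro} again finishes. The paper instead refuses to redo the computation and routes odd starting states through Prop.~\ref{periodicproII}: it computes the survival ratio $\hat h_0(x)=(|x|+1)(\sqrt{a/b})^{|x|}$, identifies $\hat h$ with $h_0$ among the $\rho$-harmonic functions \eqref{eqn:harmonic}, and checks that the weighted mixture $\sum_{x}w_{u,x}\pi^0_x$ collapses back to $\pi^0_u$. Your route is shorter and more elementary for this example; the paper's buys a demonstration of the general machinery (and of the role of $\hat h$) that it exploits elsewhere. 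The one soft spot in your write-up is the uniformity needed to sum the local-limit asymptotics over the target state --- both to obtain $K^{2n}(x,S)$ and to upgrade pointwise ratio convergence to convergence of the conditional law in \eqref{eqn:PeriodicYaglomLimit2}; you flag this and point at the correct estimate, and the paper disposes of it by the uniform-equivalence argument leading to \eqref{eqn:alive}, so this is a matter of detail rather than a gap.
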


In the next two sections, we prove Theorem~\ref{mainone}.
To show that \eqref{eqn:ee}--\eqref{eqn:oe} hold, we first look at the
asymptotics of their denominators,  and then the asymptotics for their
numerators.
In the remainder of this section, we describe the $\rho$-invariant measures and harmonic functions for the
hub-and-two-spoke model.

We leave it to the reader to show that the matrix $K$ possesses a family of 
$\rho$-invariant \qsds\  $\sigma_\xi$ indexed by $\xi \in [-1, 1]$ and given by
\begin{equation}\label{eqn:measures}
\sigma_\xi(y) =
\begin{cases}
\frac{1 - \rho}{2a} \left(1  + \abs*{y} + \xi \, y \right) \left( \sqrt{\frac{b}{a}} \right)^{\abs*{y}} &\text{for $y \in \ZZ\setminus\set{0}$}
\\
\frac{1 - \rho}{a} &\text{for $y = 0$.} 
\end{cases}
\end{equation}
Thus, if the chain starts from state $x$,  $\pi_x = \sigma_\xi$ with $\xi
= x / (1 + \abs*{x})$ is the periodic Yaglom limit.
The derivation of the asymptotics of $K$ is given in the next sections.

By looking at $\sigma_\xi(1)$, it is clear that every member of the family is a
different distribution.  For each distribution, the correct amount of mass is
lost to absorption: $a \sigma_\xi(0) = 1 - \rho$.  As $\xi$ increases from $-1$
to 1, $\sigma_\xi(\abs*{y})$ increases from $((1 - \rho)/(2a))(\sqrt{b/a})^{\abs*{y}}$ to
$((1 - \rho)/(2a))(1 + 2\abs*{y})(\sqrt{b/a})^{\abs*{y}}$.  When $\xi = 0$, the
distribution is symmetric with $\sigma_0(y) = \sigma_0(-y)$.
The mass $\sigma_\xi(y) + \sigma_\xi(-y) = \pi^*(\abs*{y} + 1)$ does not depend on $\xi$;  $\pi^*$ was defined in \eqref{eqn:SVLimit}.   Consequently, the mass on the even integers $2\ZZ$ and the odd integers $2\ZZ + 1$ does not depend on $\xi$.  Since
\begin{alignat*}{2}
\sigma_\xi K (2 \ZZ) &= \rho \,\sigma_\xi (2 \ZZ) \qt{ by $\rho$-invariance, and} \\
\sigma_\xi K (2 \ZZ) &= \sigma_\xi (2 \ZZ + 1)  \qt{ by periodicity and nonabsorption},
\intertext{we have}
\sigma_\xi (2 \ZZ) &= \frac{1}{1 + \rho},
\sigma_\xi (2 \ZZ + 1) = \frac{\rho}{1 + \rho}
\end{alignat*}

In addition to the $\rho$-invariant measures $\sigma_\xi$, $K$ also
has $\rho$-harmonic functions $h_\xi$.
 A function $h \geq 0$, which we think of as a column vector with elements $h(y)$ for $y \in S$, is $\rho$-harmonic if
$Kh = \rho h$.

Equivalently, $h$ is a nonnegative right eigenvector for the eigenvalue $\rho =
1/R$.  In this example,
$K$ has a family of nonnegative $\rho$-harmonic functions $h_\xi$ indexed by $\xi \in [-1, 1]$:
\begin{alignat}{2}
h_\xi(y) &\equiv \left[1 + \abs*{y} + \xi y \right] \left( \sqrt{\frac{a}{b}} \right)^{\abs*{y}} \qt{for $y \in \ZZ$}. \label{eqn:harmonic}
\end{alignat}

\begin{proposition}
All $\rho$-invariant probability measures for the hub-and-two-spokes example
are in the family \eqref{eqn:measures} and all positive $\rho$-harmonic
functions are in the family \eqref{eqn:harmonic}.
\end{proposition}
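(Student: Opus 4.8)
The plan is to show that the two eigen-equations $\sigma K = \rho\sigma$ and $Kh = \rho h$ each reduce, on each of the two spokes, to a constant-coefficient second-order recurrence whose general solution can be written explicitly, and then to use the behaviour at the hub $0$ together with positivity to carve out exactly the one-parameter families \eqref{eqn:measures} and \eqref{eqn:harmonic}. First, for an invariant measure, reading off $\sigma K(y) = \rho\sigma(y)$ on the positive spoke gives $b\,\sigma(y-1) + a\,\sigma(y+1) = \rho\,\sigma(y)$ for $y \geq 2$, with the mirror recurrence on the negative spoke. Its characteristic equation is $a r^2 - \rho r + b = 0$; because $\rho = 2\sqrt{ab}$ the discriminant $\rho^2 - 4ab$ vanishes, so there is a single (double) root $r = \rho/(2a) = \sqrt{b/a} \in (0,1)$. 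This critical double root is the crux: it forces the general spoke solution to be $\sigma(y) = (A_+ + B_+ y)(\sqrt{b/a})^{y}$ for $y \geq 1$ and $\sigma(-m) = (A_- + B_- m)(\sqrt{b/a})^{m}$ for $m \geq 1$, with four free constants. For a harmonic function the source-indexed equations give $b\,h(x+1) + a\,h(x-1) = \rho\,h(x)$ for $x \geq 1$, whose characteristic equation $b s^2 - \rho s + a = 0$ has the reciprocal double root $s = \sqrt{a/b} > 1$, so $h(x) = (C_+ + D_+ x)(\sqrt{a/b})^{x}$ on $x \geq 0$ and symmetrically on $x \leq 0$.

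Next I would impose the hub conditions and positivity. For the measure the weight out of $0$ into each spoke is $b/2$ rather than $b$, so the equations at $y = \pm 1$ are genuine boundary conditions: the $y=1$ equation $\tfrac{b}{2}\sigma(0) + a\sigma(2) = \rho\sigma(1)$ simplifies, using $a r^2 = b$ and $\rho r = 2b$, to $A_+ = \sigma(0)/2$, and likewise $A_- = \sigma(0)/2$, while the $y=0$ equation $a\sigma(1) + a\sigma(-1) = \rho\sigma(0)$ gives $B_+ + B_- = \sigma(0)$. For the harmonic function the spoke recurrence already pins the hub value $h(0) = C_+ = C_-$, leaving only the $x=0$ equation $\tfrac{b}{2}\bigl(h(1)+h(-1)\bigr) = \rho h(0)$, which reduces to $D_+ + D_- = 2h(0)$. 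In either case a single free parameter survives. Positivity, forced by irreducibility, restricts its range: since $\sqrt{b/a} \in (0,1)$ the measure is summable for any signs of $B_\pm$, but $A_\pm + B_\pm y$ must stay positive for all $y \geq 1$, which forces $B_\pm \geq 0$; with $B_+ + B_- = \sigma(0)$ this is exactly $\xi \coloneqq 2B_+/\sigma(0) - 1 \in [-1,1]$, and the harmonic case is identical with $D_\pm \geq 0$ and $\xi \coloneqq D_+ - 1 \in [-1,1]$. Finally, normalizing total mass to $1$ fixes $\sigma(0) = (1-\rho)/a$ (the total mass is $\xi$-free because $\sigma_\xi(y)+\sigma_\xi(-y) = \pi^*(\abs{y}+1)$), while harmonic functions are determined only up to a positive scalar, so setting $h(0)=1$ identifies the surviving solutions; matching coefficients then pins them precisely to \eqref{eqn:measures} and \eqref{eqn:harmonic}.

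The only real obstacle is conceptual rather than computational: recognizing that evaluating the eigen-equations at $\rho$---the convergence parameter---lands exactly on the degenerate double-root case, so that the linear-in-$y$ factor, and hence the genuine one-parameter family reflecting the nontrivial $\rho$-Martin boundary, appears rather than a unique geometric solution. The accompanying care is bookkeeping at the hub: the asymmetry that the invariant-measure equations produce a true boundary condition at $\pm 1$ (because of the $b/2$ splitting) while the harmonic equations do not, and verifying that the positivity threshold is attained exactly at the endpoints $\xi = \pm 1$, so the admissible interval is the closed $[-1,1]$ rather than an open set.
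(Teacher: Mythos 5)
Your proof is correct and follows essentially the same route as the paper's: both rest on observing that at $t=\rho$ the spoke recurrences have characteristic equations with a double root ($\sqrt{b/a}$ for measures, $\sqrt{a/b}$ for functions), so the general solution on each spoke is an affine-in-$y$ factor times the geometric term, after which the hub equations and nonnegativity cut the free constants down to the closed one-parameter families \eqref{eqn:measures} and \eqref{eqn:harmonic} with $\xi\in[-1,1]$. The paper writes out only the harmonic case and dispatches the measures with ``a similar difference equation argument,'' so your explicit handling of the $b/2$ boundary conditions at $y=\pm1$ and the resulting identities $A_{\pm}=\sigma(0)/2$, $B_{+}+B_{-}=\sigma(0)$ is, if anything, more complete than the published proof.
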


\begin{proof}
To see that there are no other positive $\rho$-harmonic functions $h$ with
$h(0) = 1$, notice that $h(y)$ satisfies the difference equation $b h(y + 1) -
\rho h(y) + a h(y - 1)=0$ for $y \in \NN$.  Since this difference equation is a
linear, homogeneous, second order difference equation with constant
coefficients, we look at the roots of the characteristic equation $b r^2 - \rho
r + a$ to determine the general solution.  Both roots are $r = \sqrt{a/b}$, so
$h(y) = (c_1 + c_2 y) r^y$ spans the space of all solutions.  Since $h(0) = 1$,
we must have $c_1 = 1$, which we do in \eqref{eqn:harmonic}.  For $h(y)$ to be
nonnegative, we need $c_2 \geq 0$.  For $y \geq 0$, that means that $(1 +
\xi)=c_2 \geq 0$ implying $\xi \geq -1$.  By symmetry, we need $\xi \leq 1$ so
that $h(y)$ is nonnegative for $y < 0$.  Thus, \eqref{eqn:harmonic} is the set
of all positive harmonic functions normalized to have $h(0) = 1$.  A similar
difference equation argument shows that $\sigma_\xi$ for $\xi \in [-1,1]$
describes all $\rho$-invariant measures up to scalar multiples.
\end{proof}

In \autoref{sec:dualwrev} we described a duality that sometimes exists between
$\rho$-invariant measures and $\rho$-harmonic functions.  
We now show that for each value of $\xi \in [-1,1]$ the $\rho$-harmonic function
$h_\xi$ and the $\rho$-invariant measure $\sigma_\xi$ are linked.  
For the hub-and-two-spokes
example, 
the measure 
\begin{equation}\label{eqn:h2gamma}
	\gamma(x) = 
\begin{cases}
	\frac{1 - \rho}{2a}\left(\frac{b}{a}\right)^{|x|} &\text{ for $x
	\neq 0$} \\
	\frac{1 - \rho}{a} &\text{ for $x = 0$.}
\end{cases}
\end{equation}
satisfies $\gamma(x)K(x,y)=\gamma(y)K(y,x)$. 
From Prop.~\ref{prop:duality}, the $\rho$-invariant measure linked with the
$\rho$-harmonic function 
$h_\xi(x)$ would be 
$h_\xi(x)\gamma(x)$, which simplifies to $\sigma_\xi(x)$.


\subsubsection{Asymptotic behavior of the survival probability.}
For $x$ even, $K^{2n}(x,S) = K^{2n - 1}(x,S)$.
Since $K^n(-x,S) = K^n(x,S)$, also assume that $x$ is nonnegative.
Now,
\begin{alignat}{2}
K^{2n}(x,S)
&=\pr{\zeta > 2n \given X_0 = x}
\\
&= \sum_{k = 1}^\infty \pr{\zeta = 2n + 2k - 1 \given X_0 = x} \notag
\\
&= \sum_{k = 1}^\infty \pr{\zeta = 2n + 2k - 1 \given Z_0 = x + 1}.  \notag
\intertext{Since $\pr{\zeta = 2n + 2k - 1 \given Z_0 = x + 1}$ is asymptotically equivalent
to the r.h.s.\ of \eqref{eqn:SVAZ} after replacing $n$ by $2n + 2k - 1$ and $x$
by $x + 1$ and since the equivalence is uniform over $k \in \NN$ (for a
discussion of this condition,  see (5.27) of~\cite{Odlyzko95}),}
K^{2n}(x,S)
&\sim \sum_{k = 1}^\infty \frac{(x + 1)  2^{2n + 2k}  }{(2\pi)^{1/2}(2n + 2k - 1)^{3/2}} { b^{ (2n + 2k - x - 2)/2}a^{ (2n + 2k  + x)/2}} \notag
\\
&=
\frac{(x+1)}{(2\pi)^{1/2}}\left(\sqrt{\frac{a}{b}}\right)^{x}\frac{(4ab)^n}{(2n)^{3/2}}
\frac{1}{b} \sum_{k = 1}^\infty \frac{(4ab)^k}{(1 + (2k - 1)/(2n))^{3/2}}
\notag
\\
&\sim \frac{(x+1)}{(2\pi)^{1/2}}\left(\sqrt{\frac{a}{b}}\right)^{x}\frac{(4ab)^n}{(2n)^{3/2}}
\frac{4a}{1-4ab}
\notag
\\
&= {(x+1)}\left(\sqrt{\frac{a}{b}}\right)^{x}  \frac{a}{1-4ab}\frac{1}{\sqrt\pi}\frac{(4ab)^n}{n^{3/2}}
\quad\text{ for even $x\geq0$}
\label{eqn:alive}
\end{alignat}
where the second from last step follows from using dominated convergence to show that
\[
	\lim_{n \to \infty} \sum_{k = 1}^\infty 
	\frac{(4ab)^k}{(1 + (2k - 1)/(2n))^{3/2}} = \sum_{k = 1}^\infty (4ab)^k.
\]



%
%


\subsubsection{The periodic Yaglom limit of the
hub-and-two-spoke model.}
In this section, we show $K$ corresponding to Figure~\ref{fig:OurExample} has a periodic Yaglom limit; that is, we establish \eqref{eqn:periodicYaglomExample}.
Let $x,y \geq 0$. From the coupling, we have
\begin{alignat*}{2}
Q^n(x,y)
&= K^n(x,-y) + K^n(x, y),
\end{alignat*}
and we can determine the asymptotics of $Q$ from the asymptotics of $P$
given in \eqref{eqn:SVAP} that were derived in Seneta and Vere-Jones~\cite{Vere-Jones-Seneta}.

Similar to the classical ballot problem, there are two types of paths from $x$
to $y$: those that visit 0 and those that do not.  From the reflection
principle, any path from $x$ to $y$ that visits 0 has a corresponding path from
$-x$ to $y$ with the same probability of occurring.  Thus, if $\zK^n(x,y)$
denotes the probability of going from $x$ to $y$ in $n$ steps without
visiting zero in between, we have
\begin{alignat*}{2}
K^n(x,y) &= \zK^n(x,y) + K^n(-x,y)=\zK^n(x,y) + K^n(x,-y).
\end{alignat*}
From the coupling, $\zK^n(x,y) = P^n(x,y)$.

 For $x \geq 0$ and $y \geq 1$,
\begin{alignat*}{2}
Q^n(x, y) &=  K^n(x, y) + K^n(x, -y).
\end{alignat*}
Hence
\begin{alignat}{2}
 K^n(x, y) &=  Q^n(x, y) - K^n(x, -y)\\
           &= Q^n(x,{y}) - \left[ K^n(x,y)- \zK^n(x,y) \right] \\
           &= \frac{ Q^n(x,y) + \zK^n(x,y)}{2}. \label{eqn:Kp}
\intertext{Similarly,}
 K^n(x, -y)    &= \frac{  Q^n(x,{y})-\zK^n(x,y)}{2}. \label{eqn:Kn}
\end{alignat}

\subsubsection{Yaglom limits from even states to even.}\label{subsubsec:eventoeven}
If either $x$
is 0 or $y$ is 0, then the asymptotics of $K^n(x,y)/K^n(x,S)$ can be obtained
directly from the results in Seneta and Vere-Jones~\cite{Vere-Jones-Seneta} through the coupling
Thus, \eqref{eqn:PeriodicYaglomLimit2} holds for $k = 0$.

Now, let $x,y \geq 1$ and even.  From the couplings,
\begin{alignat*}{2}
Q^{2n}(x,y) &= P^{2n}(x + 1, y + 1),
\zK^{2n}(x,y)=P^{2n}(x,y).
\intertext{Since $x - y$ is also even, \eqref{eqn:SVAP} gives}
Q^{2n}(x,y) &\sim (x+1)\left(\sqrt{\frac{a}{b}}\right)^{x}(y+1)\left(\sqrt{\frac{b}{a}}\right)^{y}
{\frac{1}{\sqrt\pi}}\frac{(4ab)^n}{n^{3/2}}  \\
\zK^{2n}(x,y) &\sim x \left(\sqrt{\frac{a}{b}}\right)^{x-1} y \left(\sqrt{\frac{b}{a}}\right)^{y-1}
 {\frac{1}{\sqrt\pi }} \frac{(4ab)^n}{ n^{3/2}}
\end{alignat*}

Hence, using \eqref{eqn:Kp} and \eqref{eqn:alive},
\begin{alignat}{2}
\frac{K^{2n}(x,y)}{K^{2n}(x,S)} &\sim \frac{(x+1)\left(\sqrt{\frac{a}{b}}\right)^{x}(y+1)\left(\sqrt{\frac{b}{a}}\right)^{y}
{\frac{1}{\sqrt\pi}}\frac{(4ab)^n}{n^{3/2}}}{2{(x+1)}\left(\sqrt{\frac{a}{b}}\right)^{x}  \frac{a}{1-4ab}\frac{1}{\sqrt\pi}\frac{(4ab)^n}{n^{3/2}}}
\notag
\\
&+ \frac{x \left(\sqrt{\frac{a}{b}}\right)^{x-1} y \left(\sqrt{\frac{b}{a}}\right)^{y-1}
 {\frac{1}{\sqrt\pi }} \frac{(4ab)^n}{n^{3/2}}}{2{(x+1)}\left(\sqrt{\frac{a}{b}}\right)^{x}  \frac{a}{1-4ab}\frac{1}{\sqrt\pi}\frac{(4ab)^n}{n^{3/2}}}
\notag
\\
&= \frac{1-4ab}{2a} \left[ (y+1) + \frac{xy}{x + 1} \right] \left(\sqrt{\frac{b}{a}}\right)^{y}
\label{eqn:etoeasymp}
\\
&= \frac{1-\rho^2}{2a} \left( 1 + y  + \frac{xy}{x + 1} \right) \left(\sqrt{\frac{b}{a}}\right)^{y}.
\label{eqn:pi0xy}
\end{alignat}

We now argue that \eqref{eqn:PeriodicYaglomLimit2} holds for $d=2$, $k=0$ and $S_0 = 2\ZZ$ with
\begin{equation}\label{eqn:pi0xybetter}
\pi_x^0(y) =
\begin{cases}
\frac{1-\rho^2}{2a} \left( 1 + \abs*{y}  + \frac{xy}{\abs*{x} + 1} \right) \left(\sqrt{\frac{b}{a}}\right)^{\abs*{y}} &\text{for $y \neq 0$ and even $x,y$,}
\\
\frac{1-\rho^2}{a} &\text{for $y = 0$ and even $x$.}
\end{cases}
\end{equation}
We are still assuming even $x,y \geq 1$.
Since $\pi_x^0(y)$ is the same as \eqref{eqn:pi0xy}, we know that \eqref{eqn:pi0xybetter} holds for even $x,y \geq 1$.
The asymptotics for $-x$ to $-y$ are the same as from $x$ to $y$, and $\pi_{-x}^0(-y) = \pi_x^0(y)$ so \eqref{eqn:pi0xybetter} also holds in this case.

To handle from $x$ to $-y$, we use \eqref{eqn:Kn} instead of \eqref{eqn:Kp},
which causes a single sign change, and the final result agrees with
\eqref{eqn:pi0xybetter} in this case.  The asymptotics from $-x$ to $y$ are the
same as from $x$ to $-y$, and \eqref{eqn:pi0xybetter} gives the same result in
both cases.

\subsubsection{Yaglom limits from even states to odd.}

Instead of using a similar argument for the asymptotics from even to odd, we
use Proposition~\ref{periodicpro}.  Since \eqref{eqn:PeriodicYaglomLimit2}
holds for $k = 0$, Proposition~\ref{periodicpro} gives
\eqref{eqn:PeriodicYaglomLimit2} for $k = 1$.  Proposition~\ref{periodicpro}
also gives $\pi_x^1$, which is a probability measure on the odd states $S_1 = 2
\ZZ - 1$ giving the asymptotics from even to odd.
Next, Proposition~\ref{periodicpro} gives us
the \rhoi\qsd\ $\pi_x$ for every even $x$.
We leave it to the reader to show that $\pi_x$ is given by \eqref{eqn:periodicYaglomExample}.
Hence, we have the asymptotics
going to any state as long as the
starting state is even.

\subsubsection{Yaglom limits starting from odd states.}\label{sec:hath}
To finish determining the periodic Yaglom limit, we obtain the asymptotics starting from an odd state.
Instead of direct calculations like
those that led to \eqref{eqn:etoeasymp},
We use Prop.~\ref{periodicproII}.
To do so, we need the function $\hat{h}_0$ defined in
\eqref{eqn:SurvivalRatioPeriodic}.  The class $S_0$ is the even states.
Choose $x_0 = 0$ to be the reference state.
From \eqref{eqn:alive} and symmetry, it follows that
$\hat{h}_0(x)= (\abs{x} + 1) \left( \sqrt{ a/b } \right)^{\abs{x}}$.
Since the assumptions of Prop.~\ref{periodicproIII} hold, there exists a $\rho$-harmonic
function $\hat{h}$ that agrees with $\hat{h}_0$ on $S_0$ such that
\eqref{eqn:pp3} holds.
However, \eqref{eqn:harmonic} describes all $\rho$-harmonic functions for $K$.
The only $\rho$-harmonic function that could agree with $\hat{h}$ on $S_0$ is $h_0$, that is, $h_\xi$
with $\xi = 0$ in \eqref{eqn:harmonic}.
Thus, $\hat{h} = h_0$.  Furthermore, since the assumptions of Prop.~\ref{periodicproIII} hold, the denominator of
\eqref{eqn:p35} simplifies to $\rho^{d - j}\hat{h}(u)$.

Let $u \in S_1$, that is, an odd state, and $y \in S_0$.
To make things easier, temporarily assume $u \geq 1$ and $y \neq 0$.
From \eqref{eqn:p3},
\begin{alignat*}{2}
	\pi_u^{-1}(y) &= \sum_{x \in S_0} w_{u,x} \pi_x^0(y) \\
	&= \frac{a \hat{h}(u - 1)}{\rho \hat{h}(u)} \pi_{u - 1}^0(y)
	+ \frac{b \hat{h}(u + 1)}{\rho \hat{h}(u)} \pi_{u + 1}^0(y) \\
	&= \frac{a u}{2 \sqrt{a b} (u + 1)} \sqrt{\frac{b}{a}} \, \pi_{u - 1}^0(y)
	+ \frac{b (u + 2)}{2\sqrt{a b} (u + 1)} \sqrt{\frac{a}{b}} \, \pi_{u + 1}^0(y) \\
	&= \frac{u}{2(u + 1)} \pi_{u - 1}^0(y)
	+  \frac{u + 2}{2(u + 1)} \pi_{u + 1}^0(y) \\
	&= \frac{1 - \rho^2}{2 a} \left( 1 + \abs{y} + \left[\frac{u}{2(u + 1)} \frac{u - 1}{u} + \frac{u + 2}{2(u + 1)} \frac{u + 1}{u + 2} \right] y \right) \left( \sqrt{\frac{b}{a}}  \right)^{\abs{y}}  \\
	&= \pi_u^0(y)
\end{alignat*}
where we used \eqref{eqn:pi0xybetter} several times.  The case with $y = 0$ is much simpler and also simplifies to $\pi_u^0(0)$.
We leave the cases with $u \leq 1$ to the reader.
Thus, we have the asymptotics starting from an odd state and going to an even state.  
Again, Prop.~\ref{periodicpro} allows us to extend the result to
going to any state.  Hence, we have the asymptotics starting from any state and going to any state.  Combining all of the results, we have that
\eqref{eqn:PeriodicYaglomLimit0} holds where $\pi_u(y)$ is given in \eqref{eqn:periodicYaglomExample}
(though we would have to interchange labels on $S_0$ and $S_1$ in \eqref{eqn:PeriodicYaglomLimit0}  if the
initial state were odd).

\subsubsection{Rates of convergence and the starting state's influence.}

One fear with Yaglom limits is that by the time the transient conditional
distribution becomes close to the limiting conditional distribution, the
probability of non-absorption will be so small that the limit will be of little
practical importance.  To briefly address this fear, we describe some
empirical results where the dependence of the limiting distribution on the initial
state is apparent after a small number of steps (50 steps in Table~\ref{Table:from10})
and the non-absorption probability is not ridiculously small ($0.00047$ in Table~\ref{Table:from10}).

Suppose that $b = 1/5$, so $a = 4/5$ and $\rho = 4/5$.  Also assume
that  the initial state is $X_0 = 10$.  From \eqref{eqn:pi0xybetter}, $\prs{10}{X_{2n} = 0
\given \zeta > 2n} \to \pi_{10}^0(0)  = 0.45$ and $\prs{10}{X_{2n} \in 2\NN  \given \zeta >
2n} \to 201/440 \approx 0.46$.  The latter limit is the limiting conditional
probability starting from 10 of being a strictly positive, even integer after
an even number of steps.  Starting from state 10, $\xi = 10/11$;
asymptotically after a large, even number of steps, over $90\%$ of the probability
mass is on the nonnegative, even integers.  The remaining probability mass,
approximately 0.09, is on the strictly negative, even integers.
Table~\ref{Table:from10} suggests that at least in some cases the limiting
conditional distribution might be giving some information before the
probability of non-absorption $K^n(10,S)$ becomes ridiculously smal1.

Table~\ref{Table:from10} also illustrates the long range influence of the
starting state.  The limiting conditional probability of being in a strictly
positive state is roughly 5 times larger than the limiting conditional
probability of being in a strictly negative state.  If the initial state had
been zero, then the two limiting conditional probabilities would have been
equal.  If the process had started in state -10, the third and fourth columns
would swap.

\begin{table}
	\begin{center}
\begin{tabular}{@{}rcccc@{}} \toprule
$n$ & $\frac{K^n(10,0)}{K^n(10,S)}$ &  $\frac{K^n(10,2\NN)}{K^n(10,S)}$
& $\frac{K^n(10,-2\NN)}{K^n(10,S)}$ & $K^n(10,S)$ \\ \midrule
0 & 0.00 & 1.00  & 0.00 &1.00 \\
10 & 0.11 & 0.89 & 0.00 &1.00 \\
20 & 0.38 & 0.60 & 0.02 &0.31 \\
30 & 0.44 & 0.53 & 0.03 &0.042 \\
40 & 0.46 & 0.50 & 0.04 &0.0050 \\
50 & 0.46 & 0.49 & 0.05 &0.00047 \\
$\infty$ & 0.45 & 0.46 & 0.09 & 0.0 \\ \bottomrule
\end{tabular}
\caption{Rates of convergence starting from state 10 with $b = 1/5$ and $2\NN =
	\set{2,4,6, \ldots}$.}\label{Table:from10}
\end{center}
\end{table}
		
\subsubsection{Domain of attraction paradox.}\label{sec:domainofattraction}
The domain of attraction problem  is to determine which initial distributions lead to a particular QSD describing the limiting conditional behavior.
To make things concrete, consider our hub-and-two-spoke example in Fig.~\ref{fig:OurExample}.  
Suppose $b = 1/5$ and $X_0 = 6$, which means that $\pi_6$ describes the limiting conditional
distribution.  
Then, $X_2$ has distribution $K^2(6,\cdot)$, which is 8 with probability $b^2$, 6 with probability $2ab$, and 4 with probability $a^2$.
It might seem obvious that 
$K^2(6, \cdot)$
must be in the domain of attraction of $\pi_6$, but it is not even true.
The limiting conditional behavior is quite
different when these two distributions are used as initial distributions even though
there is no possibility of absorption in 2 steps when starting from state 6.

We will show below that an initial distribution with support on $\set{4,6,8}$ that is in the domain of
attraction of $\pi_6$ is 8 with probability $b^2 (225/28) = 9/28$, 6 with probability $2ab (25/16)=1/2$, and 4 with probability
$a^2 (125/448) = 5/28$.  
This distribution is obtained by letting the mass at $y$ be
\begin{equation}
	\frac{K^{nd}(6, y) \hat{h}(y)}{\sum_z K^{nd}(6,z) \hat{h}(z)},
\end{equation}
where $nd = 2$. Intuition for this choice is given in Remark~\ref{rem:intuition}.

For our hub-and-two-spoke example, we can do much more than locate a few
distribution within the domain of attraction of $\pi_x$; we can give a fairly
complete solution to the domain of attraction problem when the initial
distribution has a finite support.
Initially, assume that the support of $X_0$ is either on the odd
or even integers.  
If the support of $X_0$ is finite
and concentrated on either the evens or odds,  
then the limiting conditional behavior 
of the hub-and-two-spoke model 
is described by $\sigma_\xi$
where $\xi = \E[X_0/(\abs{X_0} + 1)]$.  
By this we mean
\begin{alignat}{2}
 \lim_{n \to \infty} \pr{X_{nd + k} = y \given X_{n d + k} \in S}
 &=  \frac{\sigma_\xi(y)}{\sigma_\xi(S_k)}.   \label{eqn:DoA}
\end{alignat}
To see that \eqref{eqn:DoA} holds, 
Let $\phi(x) = \pr{X_0 = x}$, and  
let $S_0$ denote the class that
includes the support of $\phi$.
Notice that $\pi_x(S_k)$ is a constant for all $x \in S_0$ so that
$\sigma_\xi(S_k) = \pi_x(S_k)$.   
Since we have a periodic Yaglom limit from each state $x$, we know that
\eqref{eqn:PeriodicYaglomLimit0} holds.  Hence, 
\begin{alignat*}{2}
	\lim_{n \to \infty} \sum_x \phi(x) \prs{x}{X_{nd + k} = y \given X_{n d + k} \in S}
	&=  \sum_x \phi(x) \frac{\pi_x(y)}{\pi_x(S_k)}
	\\
	&=  \frac{1}{\pi_x(S_k)}\sum_x \phi(x) {\sigma_{x/(\abs{x} + 1)}(y)}
	\\
	&= \frac{\sigma_\xi(y)}{\sigma_\xi(S_k)} 
\end{alignat*}
where $\xi = \E[X_0/(\abs{X_0} + 1)]$, and we used the finite support to
justify interchanging the limit and sum.

If the initial distribution were 
$K^2(6,\cdot)$ with $b = 1/5$,
then the limiting conditional behavior would be described by $\sigma_\xi$ where
$\xi ={6472/7875}$.
However,
if the initial distribution is 
on 4,6, and 8 with probabilities 5/28, 1/2, and 9/28, resp., 
then  $\E[X_0/(\abs{X_0} + 1)] = 6/7$, and $\sigma_{6/7} = \pi_6$.

Thus, 
we know that all distributions 
with a finite support
concentrated on either the odd or the even integers 
and having 
$\xi = \E[X_0/(\abs{X_0} + 1)]$ 
are in the domain of attraction
of $\sigma_\xi$.  
The case where the initial distribution has a finite
support that includes both even and odd integers 
is now a fairly straightforward mixture of what we have just done.  We leave
the details, starting with the appropriate expression for the r.h.s.\ of 
\eqref{eqn:DoA}, to the reader.
\subsubsection{Harmonic functions arising from ratio limits.}
Suppose $x$, $y$ are both even integers.  From the asymptotic expression for
$K^{2n}$, it follows that
\begin{alignat*}{2}
	\frac{K^{2n}(x,y)}{K^{2n}(0,y)} &\to \frac{h_{\xi}(x)}{h_{\xi}(0)}=h_{\xi}(x)
\end{alignat*}
where $h_\xi$ is the $\rho$-harmonic function given in
\eqref{eqn:harmonic} with ${\xi} = y / (\abs{y} + 1)$.  Similarly, for $x$ odd and $y$ even
integers
\begin{alignat*}{2}
	\frac{K^{2n + 1}(x,y)}{K^{2n}(0,y)} &\to \rho \frac{h_{\xi}(x)}{h_{\xi}(0)}.
\end{alignat*}
We leave the other cases to the reader.
	
The above results are not surprising given the existence of the measure
$\gamma$ described in Prop.~\ref{prop:duality}.
For example if $x$, $y$ are both even integers then
\begin{alignat*}{2}
\frac{K^{2n}(x,y)}{K^{2n}(0,y)}
&=
\frac{\gamma(0)}{\gamma(x)}\frac{K^{2n}(y,x)}{K^{2n}(y,0)}\\
&\to\frac{\gamma(0)}{\gamma(x)}\frac{\pi_y^0(x)}{\pi_y^0(0)}\\
&=\frac{\gamma(0)}{\pi_y(0)}\frac{\pi_y(x)}{\gamma(x)}\\
&=\frac{h_{\xi}(x)}{h_{\xi}(0)}.
\end{alignat*}

\subsubsection{Time reversals and $h$-transforms.}

In addition to the $\rho$-invariant measures for the hub-and-two-spoke example, we have determined all nonnegative $\rho$-harmonic functions for $K$.
Since we have a multitude of $\rho$-invariant measures $\sigma_\xi$ and $\rho$-harmonic functions $h_\xi$, we can define a multitude of time reversals
\begin{alignat}{2}
\ola{K}_\xi(x,y) &= \frac{R\sigma_\xi(y) K(y,x)}{\sigma_\xi(x)}
\label{eqn:olaK}
\intertext{and a multitude of twisted processes (tilted processes, Doob's $h$-transform, Derman--Vere-Jones transform, change-of-measure)}
{\tilde{K}_\xi}(x,y) &= \frac{R K(x,y)h_\xi (y)}{h_\xi (x)} \label{eqn:tildeK}
\end{alignat}
Both $\ola{K}_\xi$ and ${\tilde{K}_\xi}$ are stochastic matrices on $S$ for
every $\xi \in [-1,1]$. However, depending on the choice of $\xi$, the time
reversal's behavior can vary considerably, and similarly for the twisted
process.  In the hub-and-two-spoke example, we have
\begin{alignat*}{2}
\prs{0}{X_1 = 1 \given X_1 \in S} &= \frac{K(0,1)}{ K(0,S)}
= \frac{1}{2},
\end{alignat*}
but, depending on the choice of $\xi$, $\tilde{K}_\xi (0,1)$ and $\ola{K}_\xi (0,1)$ can take any value in the interval $[1/4, 3/4]$.
  Even if the time until absorption is $\zeta > n$ and $n$ is tending to $\infty$, the initial state $x$ still influences the state prior to absorption $X_{\zeta - 1}$, and two steps prior to absorption, $X_{\zeta - 2}$, \dots
%

\subsubsection{Escape probabilities for time reversals and $h$-transforms.}
Let $\overleftarrow{X}^{\xi} = (\ola{X}^{\xi}_0, \ola{X}^{\xi}_1, \ldots)$
denote a Markov chain with transition matrix $\ola{K}_\xi$ as given in \eqref{eqn:olaK}.
The transition matrix $\ola{K}_\xi$ is a birth-death chain on the integers that
is stochastic and transient, so
$\ola{X}^\xi$ must escape to either plus infinity or negative infinity.
We now compute the probability of escaping to plus infinity starting from x;
that is, we compute
$\ola{h}_\xi(x) \coloneqq \pr{\ola{X}^\xi \text{ escapes to } + \infty \given
	\ola{X}^\xi_0 = x}$.

Computing $\ola{h}_\xi(x)$ is easiest in the two extreme cases.  When $\xi =
1$, $\ola{K}_1$ on the negative integers is a \emph{symmetric}, simple random
walk: for $x<0$
\begin{alignat*}{2}
\ola{K}_1(x,x+1)&=\frac{\sigma_{1}(x+1)K(x+1,x)}{\rho \sigma_{1}(x)}\\
&=\frac{((1-\rho)/2a)\left(\sqrt{b/a}\right)^{|x+1|}b}{\rho ((1-\rho)/2a)\left(\sqrt{b/a}\right)^{|x|}}\\
&=\frac{b}{2\sqrt{ab}\sqrt{b/a}} \\
&=1/2, 
\end{alignat*}
 which means that the process cannot escape to negative infinity.  Hence,
$\ola{h}_1(x) = 1$.  Similarly, $\ola{h}_{-1}(x) = 0$.

Now, we can handle the more interesting cases with $-1 < \xi < 1$.  
From the first equation in \eqref{eqn:hittingtwrev} with $\xi^*
= 1$, 
\begin{alignat*}{2}
	\ola{F}_\xi(x, \ell) &= 
	\frac{\sigma_{\xi}(\ell)}{\sigma_{1}(\ell)}
\frac{\sigma_{1}(x)}{\sigma_{\xi}(x)} 
	\ola{F}_{1}(x, \ell) 
\end{alignat*}
Since $\ola{F}_{1}(x, \ell) = 1$ whenever $\ell > x$
and since the walk is nearest neighbor and transient, letting $\ell \to \infty$
gives the desired escape probability
\begin{alignat}{2}
	\ola{h}_\xi(x) &= \frac{1 + \xi}{2}
	\frac{\sigma_{1}(x)}{\sigma_{\xi}(x)}, \label{eqn:hescape}
	\intertext{which can be rewritten as}
	\sigma_{\xi}(x) \ola{h}_\xi(x) &= \frac{1 + \xi}{2}
	{\sigma_{1}(x)}. \label{eqn:hsigident}
\end{alignat}

By considering the other extreme case with $\xi^* = -1$ and letting
$\ell \to -\infty$, we obtain
\begin{alignat}{2}
	\sigma_{\xi}(x) (1-\ola{h}_\xi(x)) &= \frac{1 - \xi}{2}
	{\sigma_{-1}(x)} \label{eqn:hsigidentminus}
\end{alignat}
Adding \eqref{eqn:hsigident} and \eqref{eqn:hsigidentminus} yields
the representation
\begin{alignat}{2}
\sigma_\xi(y) &= \frac{1 + \xi}{2}\sigma_1(y) + \frac{1 - \xi}{2}
\sigma_{-1}(y)
 \qt{ for $\xi \in [0,1]$.}
\label{eqn:repm}
\end{alignat}

%
%
%

Now, we turn our attention to escape probabilities for the twisted
processes.
Let $\tilde{X}^{\xi} = (\tilde{X}^{\xi}_0, \tilde{X}^{\xi}_1, \ldots)$
denote a Markov chain with transition matrix $\tilde{K}_\xi$ as given in
\eqref{eqn:tildeK}.
The transition matrix $\tilde{K}_\xi$ is a birth-death chain on the integers that
is stochastic and transient, so
$\tilde{X}^\xi$ must escape to either plus infinity or negative infinity.
We now compute the probability of escaping to plus infinity starting from x;
that is, we compute
$\tilde{h}_\xi(x) \coloneqq \pr{\tilde{X}^\xi \text{ escapes to } + \infty \given
	\tilde{X}^\xi_0 = x}$.  Recall that $h_\xi(x)$ was defined in
	\eqref{eqn:harmonic}.  

Computing $\tilde{h}_\xi(x)$ is also easiest in the two extreme cases.  When $\xi =
1$, $\tilde{K}^1$ on the negative integers is a \emph{symmetric}, simple random
walk, which means that the process cannot escape to negative infinity.  Hence,
$\tilde{h}_1(x) = 1$.  Similarly, $\tilde{h}_{-1}(x) = 0$.

Now, we can handle the more interesting cases with $-1 < \xi < 1$.  
From the second equation in  \eqref{eqn:hittingtwrev} 
with $\xi^* = 1$, 
\begin{alignat*}{2}
	\tilde{F}_\xi(x, \ell) &= 
	\frac{h_{\xi}(\ell)}{h_{1}(\ell)}
\frac{h_{1}(x)}{h_{\xi}(x)} 
	\tilde{F}_{1}(x, \ell),  
\end{alignat*}
and $\tilde{F}_{1}(x, \ell) = 1$ whenever $\ell > x$.
Since the walk is nearest neighbor and transient, letting $\ell \to \infty$
gives the desired escape probability 
%
%
%
%
\begin{alignat}{2}
	\tilde{h}_{\xi}(x)
&=
\frac{h_1(x)}{h_{\xi}(x)} \frac{1 + \xi}{2} \\
&=
\frac{1 + \abs{x} + x}{1 + \abs{x} + \xi x}\frac{1 + \xi}{2}
\end{alignat}
The analogous result with $\xi^* = -1$ and $\ell \to -\infty$ is that
\begin{alignat}{2}
1 - \tilde{h}_{\xi}(x)
&=
\frac{h_{-1}(x)}{h_{\xi}(x)} \frac{1 - \xi}{2}. 
\end{alignat}
Combining the two gives the representation
\begin{alignat}{2}
	h_{\xi}(x) &= \frac{1 + \xi}{2} h_1(x) + 
	\frac{1 - \xi}{2} 
	h_{-1}(x) 
	\qt{ for $\xi \in [-1,1]$.}
	\label{eqn:newhrep}
\end{alignat}
Although we have determined the escape probabilities for all $h$-transforms,
the escape probabilities when $h = \hat{h}$ defined in \eqref{eqn:lambda} will
play a fundamental role in Yaglom limits in the $R$-transient
case~\cite{Rtransient}.  For our example, $\hat{h} = h_0$ as described in
\autoref{sec:hath}; hence, the probability of escaping to positive infinity
starting from state $x$ for this $h$-transform is
\begin{alignat}{2}
	\tilde{h}_0(x) &=
	\frac{1 + \abs{x} + x}{2(1 + \abs{x})}  
\end{alignat}
In particular, the probability measure $\pi_x$ describing the periodic Yaglom
limit starting from $x$ can be represented as the following convex combination
of two extremal measures
\begin{alignat*}{2}
	\pi_x &=  \tilde{h}_0(x) \, \pi_\infty + (1 -
	\tilde{h}_0(x)) \,  \pi_{-\infty}.
\end{alignat*}
Thus, the escape probabilities corresponding to $\hat{h}$ determine the
proper weights.

\subsubsection{Martin exit and entrance boundaries of the hub-and-two-spoke
	model.}\label{boundary}
This section depends heavily on the works of Dynkin~\cite{Dynkin} and
Woess~\cite{woess,woess-2009}.
Among other things, the Martin exit boundary theory can characterize
all positive harmonic functions, and
the Martin entrance boundary theory, all invariant measures of an irreducible stochastic or
substochastic matrix.  Generally, there seems
to be more interest in exit boundary theory since it is useful in
describing
the limiting behavior of transient processes;
if left unspecified, Martin
boundary theory usually refers to the exit boundary.
Similarly,
the $t$-Martin exit and entrance boundary theory can be used to describe all
$t$-invariant harmonic functions and $t$-invariant measures.  Again, there
seems to be more interest in the exit boundary theory.
Papers that study the
$t$-Martin exit boundary of killed random walks include
Ignatiouk-Robert~\cite{IrinaKilledHalfSpace, IrinaKilledQuadrant}, Doney~\cite{Doney}, Alili and
Doney~\cite{AliliDoney}, Raschel~\cite{Raschel}, and
Lecouvey and Raschel~\cite{LecouveyRaschel}.
Maillard~\cite{Maillard2015} identifies the $t$-invariant measures for
the Bienaym\'{e}--Galton--Watson process $t \geq \rho$. The $\rho$-Martin
entrance boundary for this process is trivial having a single point, and the
corresponding $\rho$-invariant measure is the classic limit of Yaglom.  When the $\rho$-Martin
entrance boundary is trivial, it is impossible to have different (aperiodic or
periodic) Yaglom
limits starting from different initial states.

Fix $t \geq \rho$.  
Although $G_{(x,y)}(t)$ was defined in 
  \eqref{eqn:GenFun}, the Martin boundary definitions will be slightly less
  ugly if we also define
$G_t (x,y) \coloneqq \sum_{n \geq 0} (1/t)^n K^n(x,y)$.
To construct the $t$-Martin boundaries, we  define $\prescript{*}{}M$ and $M^*$,
the $t$-Martin entrance and exit kernels respectively using 0 as the reference
state, as:
\begin{alignat}{2}
	\prescript{*}{}M(x,y) &\coloneqq \frac{G_t (x,y)}{G_t (x,0)}
	\text{, and } M^*(x,y) &\coloneqq \frac{G_t (x,y)}{G_t (0,y)}.
	\label{eqn:Martin-kernel}
\end{alignat}
Recall that $K$ is $R$-transient, and $1/t\leq R$ so $\prescript{*}{}M$ and $M^*$ exist.

Let $\prescript{*}{}{\bar{S}}$ be the smallest compactification such that the $t$-Martin
kernel $\prescript{*}{}M(x,y)$ extends continuously in $x$; i.e., $x_\infty \in
\prescript{*}{}{\bar{S}}$ if
there is a sequence $x_n\in S$ such that $\prescript{*}{}M(x_n,y)$ converges
for every $y$. The limiting measure on $S$ is denoted by  
$\prescript{*}{}M(x_\infty,\cdot)$.
Let $\partial \prescript{*}{}{\bar{S}} \coloneqq \prescript{*}{}{\bar{S}}
\setminus S$ be the boundary of
$\prescript{*}{}{\bar{S}}$.
$\partial \prescript{*}{}{\bar{S}}$ is called the $t$-Martin entrance boundary.
For more details, see of \cite[Chapter~7]{woess-2009}

Similarly
let $\bar{S}^*$ be the smallest compactification such that the $t$-Martin
kernel $M^*(x,y)$ extends continuously in $y$; i.e., $y_\infty \in \bar{S}^*$ if
there is a sequence $y_n\in S$ such that $M^*(x,y_n)$ converges for every $x$.
The limiting function on $S$ is denoted by  $M^*(\cdot, y_\infty )$.
Let $\partial \bar{S}^* \coloneqq \bar{S}^* \setminus S$ be the boundary of $\bar{S}^*$.
$\partial \bar{S}^*$ is called the $t$-Martin exit boundary.

We are particularly interested in the $\rho$-Martin entrance boundary.
Though the $\rho$-Martin entrance boundary for substochastic matrices seems to
have received little attention, it is ideally suited for
 studying Yaglom limits starting from a fixed state since the
 Yaglom limit (periodic or aperiodic) is a
$\rho$-invariant QSD, and the $\rho$-Martin entrance boundary describes all
$\rho$-invariant measures.

For the hub-and-two-spoke example, we will show that the $\rho$-Martin exit and
entrance boundaries both have two points $\set{-\infty, +\infty}$.  
If $x_n \to -\infty$, then $\prescript{*}{}M(x_n, \cdot) \to
\sigma_{-1}(\cdot)/\sigma_{-1}(0)$; if $x_n \to +\infty$, then $\prescript{*}{}M(x_n, \cdot) \to
\sigma_{1}(\cdot)/\sigma_{1}(0)$.  
Thus, we can extend $\prescript{*}{}M$
continuously to the boundary 
$\partial \prescript{*}{}{\bar{S}}=\{-\infty,+\infty\}$
by defining $\prescript{*}{}M(-\infty,
\cdot) \coloneqq \sigma_{-1}(\cdot)/\sigma_{-1}(0)$ and $\prescript{*}{}M(+\infty, \cdot) \coloneqq
\sigma_{1}(\cdot)/\sigma_{1}(0)$.
Since the two limits differ, we cannot extend
$M$ continuously with a smaller compactification.
Similarly, we will  show that if $y_n \to -\infty$, then $M^*(\cdot,y_n) \to h_{-1}(\cdot)
\eqqcolon M^*(\cdot,-\infty)$ and that if $y_n \to +\infty$, then $M^*(\cdot,y_n) \to h_{1}(\cdot)
\eqqcolon M^*(\cdot,+\infty)$, 
which shows that 
the $\rho$-Martin exit boundary is 
$\partial \bar{S}^*=\{-\infty,+\infty\}$.

To verify the claimed limits for the entrance boundary,
let us derive a more convenient expressions for the $\rho$-Martin entrance 
kernel given in \eqref{eqn:Martin-kernel}.  From \eqref{eqn:Martin-kernel} with $t =
\rho$ and using \eqref{eqn:revG},
\begin{alignat}{2}
	\prescript{*}{}M(x,y) &= 
	\frac{\sigma_{\xi^*}(y)}{\sigma_{\xi^*}(0)}  
	\frac{\ola{G}_{\xi^*}(y,x)}{\ola{G}_{\xi^*} (0,x)} 
	\label{eqn:Mtildetransform}
	\\
	&= 
	\frac{\sigma_{\xi^*}(y)}{\sigma_{\xi^*}(0)}  
	\frac{\ola{F}_{\xi^*}(y,x)}{\ola{F}_{\xi^*} (0,x)}
	\frac{\ola{G}_{\xi^*}(x,x)}{\ola{G}_{\xi^*} (x,x)} 
	\\
	&= 
	\frac{\sigma_{\xi^*}(y)}{\sigma_{\xi^*}(0)}  
	\frac{\ola{F}_{\xi^*}(y,x)}{\ola{F}_{\xi^*} (0,x)}.
\end{alignat}
Now, it is simple to compute the claimed limits for $\prescript{*}{}M(x_n,y)$.  If $x_n \to -\infty$, choose
$\xi^* = -1$ so that $\prescript{*}{}M(x_n,y) \to  
	{\sigma_{-1}(y)}/{\sigma_{-1}(0)}$.  
	On the other hand, if $x_n \to +\infty$, choose 
$\xi^* = 1$ so that $\prescript{*}{}M(x_n,y) \to  
	{\sigma_{1}(y)}/{\sigma_{1}(0)}$.

Similarly, the $\rho$-Martin exit kernel can be rewritten using \eqref{eqn:twG} to
obtain 
\begin{alignat}{2}
	M^*(x,y)
	&= 
	\frac{\tilde{F}_{\xi^*}(x,y)}{\tilde{F}_{\xi^*}(0,y)} 
	\frac{h_{\xi^*}(x)}{h_{\xi^*}(0)},\label{eqn:Mstartransform}   
\end{alignat}
which makes it easy to verify the claimed limits for $M^*(x,y_n)$.

For the hub-and-two-spoke model, we have the representation of all
positive $\rho$-invariant measures by \eqref{eqn:repm} and all positive $\rho$-harmonic functions by
\eqref{eqn:newhrep}.
The mapping where state $x \in \ZZ$ is mapped to $x/(1 + \abs{x})$ is a
homeomorphism that connects the boundary points $-\infty$ to $-1$ and $+\infty$
with $1$.
With this homeomorphism, 
\eqref{eqn:newhrep} is the general integral representation of the $\rho$-harmonic
functions
over the $\rho$-Martin exit boundary (see Theorem 6 in \cite{Dynkin})  and \eqref{eqn:repm} will be the integral representation of the
$\rho$-invariant measures over the $\rho$-Martin entrance boundary 
(see Theorem 11 in \cite{Dynkin}).  

From~\cite{vanDoorn1991}, we know that the hub-and-one-spoke model also has a unique
quasi-stationary distribution for every $t \in (\rho,1)$.
We now argue that---similar to the situation when $t = \rho$---the hub-and-two-spoke model has a family of
quasi-stationary distributions for every $t \in (\rho,1)$, which can be
normalized to give a $t$-invariant quasi-stationary distribution.

Fix $t \in (\rho,1)$.  Let $0 < s_1 < s_2 < 1$ be the roots of the $f(r) = ar^2
- tr + b$, which must be real and distinct since the discriminant is positive.
Define
\begin{equation}
	\sigma_-(x) =
	\begin{cases}
		\frac{s_1^x}{2} &\text{ for $x \geq 1$} \\
		C s_1^{\abs{x}} + \frac{s_2}{s_2 - s_1} s_2^{\abs{x}}  &\text{ for $x \leq -1$} \\
		1 &\text{  for $x = 0$}
	\end{cases}
\end{equation}
where $C = (1/2 - s_2/(s_2 - s_1))$.
Even though $C < 0$, $\sigma_-(x)$ is strictly positive since $\sigma_-(x) \geq
s_1^{\abs{x}}/2$ for all $x$.  In addition, $\sigma_-$ is summable since
$\sigma_-(x) \leq s_2^{\abs{x}}$ for all $x$.
Thus, $\sigma_-$ could be normalized to be a proper probability distribution.
Furthermore, $\sigma_-$ is a $t$-invariant measure for $K$.  For $x \geq
2$, the time
reversal of $K$ with respect to $\sigma_-$
gives $\ola{K}(x,x-1) = b/(ts_1)$ and $\ola{K}(x, x+1) = as_1/t$.
For $x \geq 2$, the drift $b/(ts_1) - as_1/t$  is negative if
$s_1 < \sqrt{b/a}$.  But the smaller root $s_1$ is less than  $\sqrt{b/a}$ since
\begin{alignat*}{2}
	f(\sqrt{b/a}) &= (\rho - t) \sqrt{b/a} 
	< 0.
\end{alignat*}
Since the time reversal is $1$-transient, the reversed process converges to
$-\infty$.  Thus, the $t$-Martin entrance boundary will have a point $-\infty$,
and the $t$-Martin entrance kernel at $-\infty$ is $\sigma_-$, which is a
minimal $t$-invariant measure.  By symmetry,
$\sigma_+(x) \coloneqq \sigma_-(-x)$ is a different minimal $t$-invariant
measure corresponding to a
different
$t$-Martin boundary point $+\infty$.  Since a nearest neighbor random walk on
the integers could have at most two points in the (full) Martin
compactification and
we have found two minimal $t$-invariant measures, we have found the whole
Martin compactification for each $t > \rho$.
Any convex combination of the two minimal $t$-invariant measures is also a $t$-invariant
measure.  
Since the $t$-Martin exit boundary does not depend on $t$ for $t \in
[\rho,1)$ and satisfies the conditions given by Woess
\cite[page~301]{woess}, 
the Martin entrance boundary is \emph{strictly stable}.  
From reversibility, the dual $t$-harmonic functions can be determined using the
reversibility measure $\gamma$ \eqref{eqn:h2gamma}.
The Martin exit boundary is also strictly stable.  

\subsection{Time until returning to state zero}\label{subsec:remlife}
We construct an example where the ratio limit $\hat{h}(\cdot )$
that is defined just before Prop.~\ref{periodicproIII}
is not
$\rho$-harmonic; indeed, this example does not have any $\rho$-harmonic
functions.  This example does not contradict Prop.~\ref{periodicproIII}
since
the support of $K(u,\cdot )$ will be infinite when $u = 0$.
Thus, the finite support assumption is needed in Prop.~\ref{periodicproIII}
and,
in general, we cannot limit attention to
$\rho$-harmonic functions when determining $\hat{h}(\cdot)$.  Despite the
above, we compute the limiting conditional distribution.  In addition, if we
consider $K^2$ with state space the even integers, we have an example with a
Yaglom limit that falls outside of Kesten's sufficient
conditions~\cite{KestenSubMarkov} since his condition (1.4u) does not
hold.

This simple example has several other interesting aspects.
The $\rho$-Martin exit
kernel at $+\infty$ is the same as the $\rho$-Martin kernel
at  state 0, which leads to a divergence of approaches on
whether to include $+\infty$ in the boundary. 
Doob \cite{Doob-1959} and Kemeny, Snell and Knapp \cite{K-S-K} would not include
the point $+\infty$, while Hunt \cite{Hunt-1960}, Dynkin \cite{Dynkin}, and Woess
\cite{woess,woess-2009} would include
$+\infty$  allowing  $S$ to  remain discrete in the induced topology;  
see the discussion at \cite[page~189]{woess-2009}.
We follow the latter group.

The example is a variation on the hub-and-one-spoke example.
Returns to state 0 in the hub-and-spoke model form a terminating renewal
process.  Define a sub-Markov chain where--prior to absorption--the state is the
remaining lifetime of
this renewal process; that is, the number of steps until being in state 0.

The state space is $\NN_0$,
and the substochastic transition matrix is (recycling the notation $K$)
    \begin{align}
K &=
\begin{bmatrix}
f_1 & f_2 & f_3 & f_4 & f_5 & \cdots \\
1 & 0 & 0 & 0  & 0 & \cdots \\
0 & 1 & 0 & 0 & 0 & \cdots \\
\vdots
\end{bmatrix}
\end{align}
where $f_n$ is the coefficient of $z^n$ in the generating function $F(z)$ given
in the beginning of \autoref{subsec:SVJRW}.
For this example, $0 = f_1 = f_3 = f_5 = \dots $, $F(R) = 1/2$, $F(1) = b$, and
$K$ is periodic with period 2.

If both the hub-and-one-spoke and the remaining lifetime examples start in
state 0, then
we can couple the two processes so that the
times of visits to state 0 are identical.
It immediately follows that this example is also $R$-transient with $\rho =
2\sqrt{ab}$.
In addition,
an asymptotic expression for $\prs{0}{\zeta > 2n}$ is given
by the r.h.s.\ of \eqref{eqn:alive} with $x = 0$.  This gives an asymptotic
expression for the denominator of $\hat{h}_0(0)$ in
\eqref{eqn:SurvivalRatioPeriodic} where the subscript 0 denotes
the class of even states and $x_0 = 0$ is the reference state.

An asymptotic expression for the numerator can be found by noticing that $\prs{2x}{\zeta > 2n} =
\prs{0}{\zeta > 2n - 2x}$.  Simplifying gives $\hat{h}_0(2x) = R^{2x}$.
Consequently, from the definition of $\hat{h}(\cdot )$ immediately before
Prop.~\ref{periodicproIII}, we have $\hat{h}(x) = R^x$ for $x \in \NN_0$. 
Even though we have computed $\hat{h}(\cdot )$, we now show that $\hat{h}(\cdot
)$ is not $\rho$-harmonic by showing that $K$ does not have any $\rho$-harmonic
functions.

Solving $Kh(x) = \rho h(x)$ for $x \geq 1$ gives $h(x) = R^x
h(0)$, which is looking like $\hat{h}(\cdot )$ above.  However, the equation $Kh(0)
= \rho h(0)$ simplifies to $\rho F(R) h(0) = \rho h(0)$, but $F(R) = 1/2$.
Thus,
there is no non-zero solution, and $K$  does not have any $\rho$-harmonic functions.
The function $\hat{h}(x) = R^x$ is $\rho$-superharmonic.


We can compute the limiting conditional distribution for the remaining lifetime
example as follows.
First, since
\begin{alignat*}{2}
	\frac{K^n(x, y)}{K^n(x,S)} &=
	\frac{K^{n - x}(0, y)}{K^{n - x}(0,S)},
\end{alignat*}
any limiting conditional distribution would not depend on the initial state, so
we can assume that the initial state is 0 without loss of generality.  (This
would also be true if we looked at a two-spoke version of the remaining
lifetime example.)
Consequently, should it exist, let $\pi^k$ to be the periodic Yaglom limit on $S_k$
where $S_0 = 2\NN_0$ and $S_1$ is the odd positive integers analogous to
\eqref{eqn:PeriodicYaglomLimit1} or \eqref{eqn:PeriodicYaglomLimit2}.
Similarly, should it exist, let $\pi$ denote the corresponding \rhoi\qsd.

From the coupling argument, we know that
\begin{alignat*}{2}
	\pi^0(0) &= \lim_{n \to \infty}
	\frac{K^{2n}(0, 0)}{K^{2n}(0,S)} \\
&= \frac{1 - \rho^2}{a}
\intertext{and if a periodic Yaglom limit exists, that}
\pi(S_0) &= 1/(1 + \rho),
\pi(S_1) = \rho/(1 + \rho) \mbox{ and }
\pi(0) = \frac{1 - \rho}{a}.
\end{alignat*}
	
For $y \geq 1$,
\begin{alignat*}{2}
	 K^{2n}(0,2y) &= K^{2n+2y}(0,0) - \sum_{k = 1}^{y } K^{2n+2y
	- 2k}(0,0)f_{2k}
	\\
	&\sim \frac{1}{\sqrt{\pi}} \frac{(4ab)^{n + y}}{n^{3/2}}
	\left[1 - \sum_{k =1}^n f_{2k} R^{2k} \right]
	\\
	\pi^0(2y) &= \rho^{2y} \frac{1 - \rho^2}{a}
	\left[1 - \sum_{k =1}^y f_{2k} R^{2k} \right]
\end{alignat*}
where we used the coupling to see that $K^n(0,0) = P^n(1,1)$,
\eqref{eqn:SVAP} with $x=y=1$, and \eqref{eqn:alive} with $x = 0$.
Since $\sum_{y \geq 1} \rho^{2y} \sum_{1 \leq k \leq y} f_{2k} R^{2k} = b/(1 -
\rho^2)$, it follows that $\pi^0$ is a proper probability measure on $S_0$.
Thus, for every $x \in S_0$, the hypotheses of Prop.~\ref{periodicpro} hold;
hence,
for every $x \in S_0$,
there is a \rhoi\qsd.
But $K$ has a unique
\rhoi\qsd, which is given by
\begin{alignat*}{2}
	\pi(y) &= \rho^y \frac{(1 - \rho)}{a} \left[ 1 - \sum_{k = 1}^y f_k R^k
	\right] \qt{for $y \in \NN_0$,}
\end{alignat*}
(and is quite different from the analogous result for the hub-and-one-spoke
model).
Consequently, the \rhoi\qsd\ $\pi$ must describe the (periodic) limiting conditional
behavior starting from any state $x$.

\subsubsection{Exit and entrance boundaries for the time remaining until
returning to zero}
For the time until returning to zero example, we examine the $\rho$-Martin
exit and entrance
boundaries.  Unlike the hub-and-two-spoke model, the $\rho$-Martin entrance and
exit boundaries
are not the same.  In addition, the minimal $\rho$-Martin exit boundary and 
the $\rho$-Martin exit boundary are not equal.

To compute the $\rho$-Martin kernels, use \eqref{eqn:twG} so that 
\begin{alignat*}{2}
	G_\rho(x,y) 
	&=  
	\tilde{G}(x,y) 
	\frac{\hat{h}(x)}{\hat{h}(y)}
\end{alignat*}
where $\hat{h}(x) = R^x$,
\begin{align}
	    \tilde{K} &=
\begin{bmatrix}
	\tilde{f}_1 & \tilde{f}_2 & \tilde{f}_3 & \tilde{f}_4 & \tilde{f}_5 & \cdots \\
1 & 0 & 0 & 0  & 0 & \cdots \\
0 & 1 & 0 & 0 & 0 & \cdots \\
\vdots
\end{bmatrix}
\end{align}
and $\tilde{f}_k = R f_k \hat{h}(k - 1)/\hat{h}(0)= f_k R^{k}$.
Notice that $\tilde{K}(0,S) = F(R) =  1/2$.
To compute
$\tilde{G}(x,y)$, notice that $\tilde{G}(x,y)=\indicator{x \geq y > 0} + \tilde{G}(0,y) $
where $\indicator{x \geq y > 0}$ is 1 if $x \geq y > 0$ and zero, otherwise.
To compute $\tilde{G}(0,y)$, also notice that the number of upward excursions has a
geometric distribution with parameter $1/2$,  and the expected number of
excursions is 1.  Each excursion visits $y$ with probability
$\tilde{F}_y \coloneqq \tilde{f}_{y + 1} +
\tilde{f}_{y + 2} + \ldots $  Thus, $\tilde{G}(0,y)= \tilde{F}_y $.

Now, the $\rho$-Martin exit kernel is
\begin{alignat*}{2}
	M^*(x,y) &\coloneqq \frac{G_\rho (x,y)}{G_\rho (0,y)} \\
	&=\frac{{\hat{h}(x)}\tilde{G}(x,y) }{{\hat{h}(0)}\tilde{G}(0,y) } \\
	&= R^x\,\frac{ \indicator{x \geq y > 0} +
		\tilde{G}(0,y)}{\tilde{G}(0,y)}
\end{alignat*}
Since $M^*(x,y_n) \to R^x$ as $y_n \to \infty$, there is a single point
$+\infty$ in the $\rho$-Martin exit boundary---even though $M^*(x,+\infty) =
M^*(x,0) = R^x$, a point is still added in the compactification of the state
space; see
\cite[page~189]{woess-2009}. As a consequence, the minimal $\rho$-Martin
exit boundary is the empty set since there are no minimal $\rho$-harmonic
functions; see \cite[page~207]{woess-2009}.   Thus, for this example, 
the $\rho$-Martin exit boundary and 
the minimal $\rho$-Martin exit boundary 
are not
the same.

\subsection{Age example}
Instead of considering the remaining lifetime as in the previous example,
consider the age where the age is the number of steps since being in zero prior
to absorption.
The state space is $\NN_0$,
and the substochastic transition matrix is (recycling the notation $K$ again)
    \begin{align}
K &=
\begin{bmatrix}
0 & b & 0 & 0 & 0 & \cdots \\
r_2 & 0 & 1 - r_2 & 0  & 0 & \cdots \\
r_3 & 0 & 0 & 1 - r_3 & 0 & \cdots \\
\vdots
\end{bmatrix}
\end{align}
where $r_j = f_j/\bar{F}_j$ and $\bar{F}_j = f_j + f_{j + 1} + \dots$.
The age example and the remaining lifetime example can be coupled so that they
are in zero at the same points in time and have the same time of absorption
if both start in state 0.  Thus, the age example is also $R$-transient with
$\rho = 2 \sqrt{ab}$.
This example is also periodic with period 2 since $0 = r_1 = r_3 = \cdots$

The age example seems nicer than the remaining lifetime
example since
the support of $K(u, \cdot)$ is finite for all states $u$.  Nonetheless, $K$
does not possess a \rhoi\qsd.  From Prop.~\ref{periodicpro}, it follows that
\eqref{eqn:PeriodicYaglomLimit1} cannot hold.  Thus, the age example does not
have a (periodic or aperiodic) Yaglom limit.


\subsection{Duality without assuming reversibility}\label{sec:dualworev}
In \autoref{sec:dualwrev}, we described a duality between $t$-invariant
measures and $t$-harmonic functions that relied on reversibility.  We 
describe a generalization that does not rely on reversibility.  

For this duality to hold, we need the following two conditions:
\begin{itemize}
	\item 
		If $\prescript{*}{}{M}(z_n,y)$ converges to a minimal
$t$-invariant measure $\sigma$ for some sequence $z_n$, then 
$M^*(x,z_n)$ also converges to a minimal
$t$-harmonic function $h$. 
	\item 
		If $M^*(x,z_n)$ converges to a minimal
$t$-harmonic function $h$ for some sequence $z_n$, then
$\prescript{*}{}{M}(z_n,y)$ also converges to a minimal
$t$-invariant measure $\sigma$. 
\end{itemize}
A $t$-harmonic function $h$ is minimal if $h(0) = 1$ 
and $h \geq h_1$ where $h_1$ is also
$t$-harmonic implies that $h_1/h$ must be a constant; a minimal $t$-invariant measure
is defined analogously.  
The above two properties imply that 
$\mathcal{M}^* = 
\prescript{*}{}{\mathcal{M}}$
where $\mathcal{M}^*$ is the \emph{minimal} $t$-Martin exit boundary
\cite[page~263--264]{woess} 
and 
$\prescript{*}{}{\mathcal{M}}$
is the \emph{minimal} $t$-Martin entrance boundary.

Let $\sigma$ be any $t$-invariant measure for $K$, and let $h$ be any $t$-harmonic
function.  
The Poisson-Martin integral provides a unique representation of $h$ 
\cite[(24.18)]{woess} 
and
$\sigma$: 
\begin{alignat*}{2}
h(x)
&=
\int_{\mathcal{M}^*} M^*(x,\cdot) d\nu^h 
\\
\sigma(y)
&=
\int_{\prescript{*}{}{\mathcal{M}}}  \prescript{*}{}{M}(\cdot,y) d\nu^\sigma.
\end{alignat*}
When 
$\mathcal{M}^* = 
\prescript{*}{}{\mathcal{M}}$, we can say that $h$ and $\sigma$ are duals of
each other if $\nu^h = \nu^\sigma$ in their Poisson-Martin representations.  

We show that this concept of duality includes the duality described in
\autoref{sec:dualwrev} when $K$ is reversible.
Let $\gamma$ be the reversibility measure of $K$.  
For the first part of the argument, assume that $h$ is a minimal $t$-harmonic
function.  
Hence, there exists a sequence of states $z_n$ such that $M^*(x,z_n) \to
M^*(x,z_\infty) \eqqcolon 
h(x)$, and the unique Poisson-Martin representation has $\nu^h$ being a
point mass. 
We need to show that 
$\prescript{*}{}{\mathcal{M}}(z_n,x) \to 
\sigma(x)\coloneqq h(x) \gamma(x)$ 
and that $\sigma$ is minimal to know that $\sigma$ is the dual of $h$ under the
generalized definition.  

Now
\begin{alignat*}{2}
\prescript{*}{}{{M}}(z_n,x)
&=
\frac{G_t(z_n,x)}{G_t(z_n,0)} \\
&=
\frac{\gamma(x) G_t(x,z_n)/\gamma(z_n)}{\gamma(0)G_t(0,z_n)/\gamma(z_n)} \\
&=
\frac{\gamma(x)}{\gamma(0)} M^*(x,z_n) \\
&\to
{\gamma(x)}{h(x)} = \sigma(x).
\end{alignat*}
In addition, $\sigma$ is minimal.  To see this, assume that $\sigma \geq
\sigma_1$, which is also $t$-invariant.  Hence, $h \geq h_1$ where $h_1(x) =
\sigma_1(x)/\gamma(x)$.  Since $h_1$ is also $t$-harmonic, $h/h_1$ is a constant,
but that means $\sigma/\sigma_1$ is a constant, which means $\sigma$ is
minimal.  
Thus, $\sigma$ must be the dual of $h$ under the generalized definition.

The second part of the argument is analogous to the first part except for
starting off with a minimal $t$-invariant measure $\sigma$ and showing that
$h(x) = \sigma(x)/\gamma(x)$ is the dual of $\sigma$.  

From the first two parts of the argument, it follows that 
$\mathcal{M}^* = 
\prescript{*}{}{\mathcal{M}}$
and that if $z_\infty \in \mathcal{M}^*$ then
$M^*(x,z_\infty) 
=
\prescript{*}{}{M}(z_\infty,x)/\gamma(x)$.  
From the Poisson-Martin integral representation, it now follows that 
that the generalized definition of duality reduces to $h(x) =
\sigma(x)/\gamma(x)$ when $K$ is reversible.  

\section{Literature}\label{sec:Lit}
Van~Doorn and Pollett~\cite{vanDoornPollett} survey the vast
literature on, and the interconnections among, Yaglom limits, limiting
conditional distributions, \qsds\  and $t$-invariant measures.  We discuss only
the most relevant papers, and we consider only
irreducible chains.  Many papers are set in continuous time, but most results for continuous time processes have an obvious analog in discrete time \cite[Section~3.4]{vanDoornPollett}.

In the impressive paper Seneta and Vere-Jones
\cite{Vere-Jones-Seneta}, the authors show that the Yaglom limit does not depend on the
initial state in the $R$-positive case, but they allow for the
possibility that a Yaglom limit might depend on the starting state in the
$R$-transient case.  Seneta and Vere-Jones analyze several examples
including the killed simple random walk on the nonnegative integers that we
exploited and the Bienaym\'{e}-Galton-Watson
process.

That the Yaglom limit might depend on the starting state seems to
have been overlooked for two reasons:  first, in all of the analyzed examples with
Yaglom limits starting from a fixed state, the limit did not depend on the
starting state; second, it
seems like there should be a coupling argument
showing that the limit does not
depend on the starting state.

The connection between the $\rho$-Martin entrance boundary and Yaglom
limits for $\rho \neq 1$ seems to have been largely overlooked except for
Maillard~\cite{Maillard2015},
who studies the minimal $t$-Martin entrance boundary for $t \geq
\rho$ for the subcritical Bienaym\'{e}-Galton-Watson process.  He finds
that the $\rho$-Martin entrance boundary is trivial and that there is a
unique \rhoi QSD, which is the classic Yaglom limit.
Breyer \cite{Breyer} made the profound connection between the space-time Martin
entrance boundary and Yaglom limits but failed to realize that convergence
to the space-time boundary could fail as in Kesten's example \cite{KestenSubMarkov} (see below).
Lalley \cite{Lalley-1991} identifies the space-time Martin boundary of a nearest
neighbor random walk (stochastic transition matrix) on a 
class of homogeneous trees.  

Seneta and Vere-Jones \cite{Vere-Jones-Seneta} mention but do not pursue the
domain of attraction problem.  Most of the domain of attraction work has been
concerned with situations where there is a unique $t$-invariant
distribution for each $t \in [\rho, 1)$, and the question is which initial
distributions are in the domain of attraction of a particular
$t$-invariant distribution; see van Doorn \cite{vanDoorn1991} and
Villemonais \cite{Villemonais2015}.  This is slightly different than
\autoref{sec:domainofattraction} where we have many $\rho$-invariant
distributions, and we determine which initial distributions with a finite
support are in the
domain of attraction of a given \rhoi QSD.

The most amazing example appears in H.~Kesten's \emph{tour de force}
\cite{KestenSubMarkov}.
Certainly, Kesten was aware that the Yaglom limit
could depend on the starting state, but he does not even bother to mention it.
Instead, he focuses on constructing an example
of a sub-Markov chain possessing most every nice property possible including having at least one $\rho$-invariant \qsd, but fails to have a Yaglom limit!
Our far more pedestrian example has a different Yaglom limit for every starting state.
Kesten's example and ours are similar. The only
difference is that Kesten allows  the process to stay in each
state $x$ with probability $r_x$.
Once the process leaves a state, the transition probabilities are
identical to our example.    By carefully choosing the
values of $r_x$, Kesten constructs an example where $\pr{X_n > 0 \given X_0
	= 0, X_n \in S}$ oscillates as $n$ increases and does not converge. 
	Hence, the Yaglom limit fails.
Kesten \cite{KestenSubMarkov} also gives general conditions guaranteeing the existence of a
Yaglom limit that does not depend on the starting state but this result depends on conditions ensuring the
uniqueness of the $\rho$-invariant probability or equivalently the triviality of the $\rho$-Martin
entrance boundary.

\acks
We would like to thank Phil Pollett for his helpful comments, Vadim Kaimanovich
for fielding some Martin boundary questions, and Jaime San
Martin for several technical suggestions.  
We would also
like to thank the referees for their comments, suggestions and questions, 
which we believe substantially improved the paper.

\bibliography{QuasiStationary}
\bibliographystyle{apt}
\end{document}